\def\tr{\mathrm{tr}}
\def\C{\mathscr{C}}
\def\Cl{\mathcal{C}}
\def\ClS{\mathcal{S}}
\def\diag{\mathrm{diag}}
\def\GL{\mathrm{GL}}    
\def\Mat{\mathrm{Mat}}   
\def\I{\mathrm{I}}
\def\id{\mathsf{id}}
\def\PSU{\mathrm{PSU}}
\def\PSL{\mathrm{PSL}}
\def\PSp{\mathrm{PSp}}
\def\Or{\mathrm{O}}
\def\SL{\mathrm{SL}}  
\def\SU{\mathrm{SU}}
\def\Sp{\mathrm{Sp}}
\def\SO{\mathrm{SO}}
\def\Sym{\mathrm{Sym}}
\def\Alt{\mathrm{Alt}}
\def\wSL{\widehat{\SL}}
\def\wGL{\widehat{\GL}}
\def\P{{\rm P}}
\def\N{\mathcal{N}}
\def\V{\mathcal{V}}
\def\Vb{\mathbf{V}}
\def\K{\mathcal{K}}
\def\equad{\quad \textrm{ and } \quad}
\newcommand{\Z}{\mathbb{Z}}   
\newcommand{\F}{\mathbb{F}}   
\newtheorem{theorem}{Theorem}[section] 
\newtheorem{lemma}[theorem]{Lemma}     
\newtheorem{corollary}[theorem]{Corollary}
\newtheorem{proposition}[theorem]{Proposition}
\theoremstyle{definition}
\newtheorem{defin}[theorem]{Definition}
\numberwithin{equation}{section}
\begin{document}

\title{On the $(2,3)$-generation of the finite symplectic groups}

\author{M.A. Pellegrini and M.C. Tamburini Bellani}
\address{Dipartimento di Matematica e Fisica, Universit\`a Cattolica del Sacro Cuore,
Via Musei 41, 25121 Brescia, Italy}

\email{marcoantonio.pellegrini@unicatt.it}
\email{mariaclara.tamburini@gmail.com}

\begin{abstract}
This paper is a new important step towards the complete classification of the
finite simple groups which are $(2, 3)$-generated.
In fact, we prove that the  symplectic groups $\Sp_{2n}(q)$ are 
$(2,3)$-generated for all $n\geq 4$. 
Because of the existing literature, this result implies that the 
groups $\PSp_{2n}(q)$ are $(2,3)$-generated for all $n\geq 2$, with the exception of 
$\PSp_4(2^f)$ and $\PSp_4(3^f)$.
\end{abstract}

\keywords{Symplectic group; simple group; generation; bireflection}
\subjclass[2010]{20G40, 20F05, 20D06}

\maketitle

\section{Introduction}

A group is said to be $(2,3)$-generated if it can be generated by an involution and an element of order
$3$. By a famous result of Liebeck and Shalev \cite{LS}, the finite classical simple groups are
$(2,3)$-generated, apart from the two infinite families $\PSp_4(q)$ with $q=2^f,3^f$, and a finite list 
$\mathcal{L}$ of exceptions.
This list $\mathcal{L}$ includes the ten groups $\PSL_2(9), \PSL_3(4),\PSL_4(2),\PSU_3(9),$
$\PSU_3(25), \PSU_4(4)\cong \PSp_4(3),\PSU_4(9),\PSU_5(4),\P\Omega_8^+(2), \P\Omega_8^ +(3)$  (see \cite{Ischia,35,Max}).
It was also proved that $\mathcal{L}$ contains no other linear group $\PSL_n(q)$, no other unitary group 
$\PSU_n(q^2)$, no other symplectic group $\PSp_4(q)$, no symplectic group $\PSp_6(q)$, and no orthogonal group $\Omega_7(q)$
(see \cite{CDM,Sp6,SL12,Unit,TV}).
However, the problems of determining whether other exceptions exist and  finding $(2,3)$-generating pairs 
for the positive cases are still open (see, for instance, \cite[Problem 18.98]{KM}).

In this paper we show that $\mathcal{L}$ contains no symplectic group 
$\PSp_{2n}(q)$ for $n\geq 4$, giving a constructive proof of  the following result.

\begin{theorem}\label{main}
 The finite symplectic groups  $\Sp_{2n}(q)$  are $(2,3)$-generated for all $n\geq 4$.
\end{theorem}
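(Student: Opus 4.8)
The plan is to give, for each $q$, an explicit pair consisting of an involution $x$ and an element $y$ of order $3$ lying in $\Sp_{2n}(q)$ (preserving a fixed alternating form $J$), and then to prove that $H=\langle x,y\rangle$ is the whole group. I would write both generators in block form relative to a symplectic basis, so that $y$ is, up to conjugacy, a direct sum of $2\times 2$ and $1\times 1$ blocks realizing order $3$, while $x$ is an involution whose off-diagonal blocks ``link'' the cyclic pieces of $y$; the precise shape should depend on the characteristic, so I expect to separate the cases $p=2$, $p=3$ and $p\ge 5$, and possibly to let a few entries be parameters in $\F_q$ chosen so as to make the construction uniform in $q$. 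The first, routine, steps are to check that $x,y\in\Sp_{2n}(q)$ and have the correct orders, and to record the characteristic polynomial (or the order) of a short word such as $xy$, which will serve as the main computational input for everything that follows.

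The core of the argument is to show that $H$ is contained in no maximal subgroup of $\Sp_{2n}(q)$, via Aschbacher's classification. First I would prove that $H$ acts absolutely irreducibly on the natural module $V=\F_q^{2n}$, by exhibiting explicit words in $x,y$ and checking that the only subspaces they leave invariant are $0$ and $V$; in practice this is most easily done by showing that some element of $H$ has an irreducible characteristic polynomial of large degree, or a suitable cyclic (regular) action. Absolute irreducibility together with control of the field generated by the matrix entries, and of the traces of elements of $H$, rules out the field-extension and subfield classes $\Cl_3$ and $\Cl_5$; showing that $H$ fixes no nontrivial direct-sum or tensor decomposition of $V$ eliminates the imprimitive and tensor classes $\Cl_2,\Cl_4,\Cl_7$; and the normalizer-of-extraspecial class $\Cl_6$ is excluded on dimension and element-order grounds once $n\ge 4$. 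The remaining geometric possibilities are handled by verifying that $J$ is, up to scalars, the unique bilinear form preserved by $H$, so that $H$ is genuinely symplectic and not contained in a subgroup stabilizing a different form.

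The main obstacle is the almost-simple class $\ClS$. I expect to dispose of it by arranging that $H$ contains a transvection: when $p=2$ a suitable $x$-conjugate is a symplectic transvection, and when $p=3$ a $y$-conjugate plays the same role, so that the irreducible group $H$ is generated by transvections and McLaughlin's and Kantor's classification of such groups forces $H\supseteq\Sp_{2n}(q)$, whence equality since $H\le\Sp_{2n}(q)$ by construction. For $p\ge 5$ no generator is a transvection, and this is where the hardest work lies: I would instead compare the order of the explicit element $xy$ (or of another fixed word) against the known maximal element orders and the list of low-dimensional irreducible representations of almost simple groups, ruling out each candidate in class $\ClS$, possibly after an induction on $n$ that reduces the problem to a finite set of base dimensions to be checked directly. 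Once class $\ClS$ is eliminated, Aschbacher's theorem yields $H=\Sp_{2n}(q)$, completing the proof of Theorem~\ref{main}.
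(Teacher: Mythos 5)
Your overall frame---explicit generators, then kill every Aschbacher class including $\ClS$---is not the paper's strategy for general $n$, and the two places where you wave at the hard work are precisely where the plan breaks. First, the transvection claim is false as stated: conjugation preserves the rank of $g-\I$, so no $x$-conjugate or $y$-conjugate is ever a transvection unless $x$ or $y$ already is one, and generators of the kind you describe (permutation-like blocks linking the cycles of $y$) are nowhere near transvections. Producing a transvection as a word in $x,y$ is essentially as hard as the whole problem, and the paper deliberately avoids it: it takes a high power $\tau=[x,y]^{24}$ (or $[x,y]^{24(1-p)}$), shows by an explicit decomposition of $\F_q^{2n}$ into $[x,y]$-invariant pieces (Section \ref{azione}) that $\tau$ acts nontrivially only on a small fixed subspace independent of $n$, and then uses Dickson's lemma plus McLaughlin's classification of irreducible groups generated by full root subgroups to recognize $\SL_9(q)$ (or $\SL_7(q)$, $\SL_5(q)$, \dots) inside $H$ acting on a subspace. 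The passage from there to the full group is not a maximal-subgroup analysis at all but the growth machinery of Lemma \ref{extension} and Proposition \ref{wSLell}: once $\wSL_\ell(q)\leq H$ with $\ell\geq 4$, conjugation by $x$ and $y$ pushes $\SL_\ell$ up to $\SL_n$, and then $N$, $N^T$ and the standard Carter generators give $H=\Sp_{2n}(q)$. In particular the paper never proves (and never needs) absolute irreducibility, primitivity, or tensor indecomposability of $H$ on $\F_q^{2n}$ for unbounded $n$, whereas your plan requires all three uniformly in $n$, with no mechanism offered beyond ``explicit words''---which is exactly the computation that does not scale.

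Second, your treatment of $\ClS$ for $p\geq 5$ has no workable substance in unbounded dimension: the tabulated maximal subgroups (Bray--Holt--Roney-Dougal) stop at dimension $12$, and Hiss--Malle-type lists of irreducible representations of quasi-simple groups are bounded-degree, so comparing the order of a fixed word $xy$ against ``the list'' is not available for arbitrary $2n$. The uniform tool that does exist---Guralnick--Saxl's bireflection theorem, which the paper uses in the mid-range cases $n=4,5,6,8,9,11$---requires exhibiting a bireflection, i.e.\ an element whose fixed space has codimension $2$; short words like $xy$ are not bireflections, and you give no construction (in the paper this again comes from carefully engineered powers of $[x,y]$). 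Your proposed ``induction on $n$ reducing to base dimensions'' is precisely the missing idea, and it is not a reduction of the maximal-subgroup problem but the subgroup-growth argument above. Two smaller points: in characteristic $2$ the class $\Cl_8$ obstruction is an invariant \emph{quadratic} form, not a bilinear one (the paper excludes it by direct evaluation of $Q$ on basis vectors, and for $q=2$, $n$ even, $\Omega_{2n}^-(q)$ genuinely has the same prime spectrum as $\Sp_{2n}(q)$, so order/prime-divisor arguments alone cannot finish there); and the small fields $q\in\{2,3,4,5,7,8,9,25,\dots\}$ require separate choices of the parameter $a$ and ad hoc verifications via the Liebeck--Praeger--Saxl prime-divisor criterion, which your uniform trace-and-order scheme would miss.
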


We recall that the $(2,3)$-generation of finite symplectic groups has been studied for very small and for large 
dimensions.
More precisely, the groups $\PSp_{2n}(q)$ are $(2,3)$-generated if $n=2$ and $\gcd(q,6) = 1$ (see \cite{CDM}), 
if $n=3$ (see \cite{Sp6,TV}), if  $n\geq 25$ and $q$ is odd or if   $n\geq 40$ and $q$ is even (see \cite{ST}).
On the other hand, we point out that the groups $\Sp_4(q)$ and the groups $\Sp_6(q)$ with $q$ odd are not 
$(2,3)$-generated, see \cite{Sp4,V}.
In \cite{Vse2} Vasilyev and  Vsemirnov 
have shown that certain  matrices, of order $2$ and $3$, generate $\Sp_8(\Z)$.
The same authors have exhibited in \cite{Sp10} a pair of $(2,3)$-generators for $\Sp_{10}(\Z)$.
It follows that, for all primes $p$, the images modulo $p$ of these matrices generate $\Sp_8(\F_p)$ and $\Sp_{10}(\F_p)$,
respectively.

In view of these results we obtain the following classification.

\begin{corollary}\label{mainsym}
Let $n\geq 2$. The groups $\Sp_{2n}(q)$ are $(2,3)$-generated, except when $n=2$ and when $n=3$ and $q$ is odd.
The  groups $\PSp_{2n}(q)$ are $(2,3)$-generated  with the exception of 
$\PSp_4(2^f)$ and $\PSp_4(3^f)$, $f\geq 1$.
\end{corollary}

Theorem \ref{main} will be proved exhibiting pairs $x,y$ of $(2,3)$-generators.
In particular, the generators for $n\neq 5,6,8$ are provided in Section \ref{sec2}, 
where we give also some preliminary results.
A crucial role in our analysis will be played by the commutator $[x,y]=xy^{-1}xy$.
For this reason we describe its action in Section \ref{azione}.
The remaining parts of this paper are devoted to the proof of Theorem \ref{main}.
In fact, this result follows from 
Theorems \ref{main4} and \ref{main5} when $n\in \{4,5\}$;
Proposition  \ref{q=2} when $q=2$ and $n\geq 6$; Corollaries \ref{p210} and \ref{pno213}  when  $q>2$ and $n=10$ or $n\geq 12$;
Propositions   \ref{main7},  \ref{main9}, \ref{main11} when $q>2$ and $n \in \{7,9,11\}$;
Propositions \ref{main6} and  \ref{main8} when $q>2$ and $n \in \{6,8\}$.

Because of the existing literature and Corollary \ref{mainsym}, the problem of the $(2,3)$-generation of
the finite simple groups remains open 
only for the following orthogonal groups:
\begin{itemize}
\item $\Omega_{2n+1}(q)$, where  $4\leq n\leq 39$ and $q$ is odd;
\item $\P\Omega_{2n}^+(q)$, where  $4\leq n\leq 24$, any $q$;
\item $\P\Omega_{2n}^-(q)$, where  $4\leq n\leq 43$ if $q$ is odd and  $n\geq 4$ if $q$ is even.
\end{itemize}

\section{The generators for $n\neq 5$  and preliminary results}\label{sec2}

Let $q=p^f$ be a power of the prime $p$. 
Denote by $\F$ the algebraic closure of $\F_p$ and by 
$\C=\{e_1,\ldots,e_{n}, e_{-1},\ldots,e_{-n} \}$ the canonical basis of $\V=\F^{2n}$.
Make $\GL_{2n}(\F)$  act on the left on $\V$ and call
$\Sp_{2n}(\F)$ the group of isometries of the Gram matrix
$$J=\begin{pmatrix} 0  & - \I_n \\ \I_n & 0\end{pmatrix}.$$
Setting   $V=\sum\limits_{i=1}^n \F_q e_i$ we have $e_{-i}=Je_i$
and $\Vb:=V \oplus JV=\F_q^{2n}$.

The stabilizer of $V$ in $\Sp_{2n}(\F_q)$ is the maximal subgroup $N\rtimes\wGL_n(\F_q)$, where
$$N= \left\{
\begin{pmatrix} \I_n & X\\ 0 & \I_{n}\end{pmatrix}\mid  X=X^T\right\}\equad 
\wGL_n(\F_q)= \left\{ \begin{pmatrix} A & 0\\  0 & A^{-T}\end{pmatrix} \mid A\in \GL_{n}(\F_q)\right\}.$$
Set $S_0=\{0\}$  and, for $1\leq \ell\leq n$, define
$$\begin{array}{rcl}
S_\ell & =&  \left\langle e_i\mid n-\ell+1\leq i\leq  n\right\rangle,\\
\wSL_\ell(\F_q) & = &\left\{\diag(\I_{n-\ell}, A, \I_{n-\ell}, A^{-T}) \mid A\in \SL_{\ell}(\F_q)\right\}.
\end{array}$$
Clearly $\wSL_\ell(\F_q)$ induces $\SL_\ell(\F_q)$ on $S_\ell$.
Moreover $\left\langle \wSL_n(\F_q), N, N^T\right\rangle=\Sp_{2n}(q)$, e.g., see Steinberg's generators in \cite[Section 11.2]{Ca}.

For $n=4$ and for $n\geq 6$  write 
$$n=3m+r, \quad r=0,1,2.$$
For any $a \in \F_q^*$ such that $\F_q=\F_p[a]$, consider the matrices  $x_1$, $x_2=x_2(a)$ of order $1$ or $2$  and the matrices $y_1$, $y_2$ of order $1$ or $3$ so defined.
\begin{itemize}
  \item[($x_1$)] The matrix $x_1\in \SL_{2n}(q)$ is obtained as follows:
\begin{itemize}
\item[$\bullet$] if $r=0$, then $x_1$ fixes $e_{\pm j}$ for all $j\geq 3$. Also,
$x_1 e_1=e_{-2}$,  $x_1 e_{-2}=e_1$, 
$x_1e_2= - e_{-1}$ and  $x_1e_{-1} = -e_2$
when $p$ is odd; 
$x_1 e_1 = e_1+e_{-1}$, $x_1 e_{-1}=e_{-1}$, $x_1 e_2=e_{-2}$ and $x_1e_{-2}=e_2$ when $p=2$;
\item[$\bullet$]  if $r>0$, then $x_1=\I_{2n}$.
\end{itemize}
\item[($x_2$)] The matrix $x_2 \in \wGL_n(q)$ fixes $e_{\pm(3j+5+r)}$ for any $j=0,\ldots,m-4$. Moreover:
\begin{itemize}
\item[$\bullet$] if $r=0$ then $x_2$ fixes $e_{\pm 1}$ and $e_{\pm 2}$; 
\item[$\bullet$] if $r=1$ then $x_2e_{\pm 1}=e_{\pm 2}$ and, if $n\geq 7$, $x_2 e_{\pm 3}=e_{\pm 3}$;
\item[$\bullet$] if $r=2$ then $x_2e_{\pm 1}=e_{\pm 4}$ and $x_2 e_{\pm 2}= e_{\pm 3}$;
\item[$\bullet$] if $n\geq 9$ and $n\neq 11$, then $x_2 e_{\pm (n-4)}=-e_{\pm (n-4)}$;
if $n=11$, then $x_2 e_{\pm (n-4)}=e_{\pm (n-4)}$;
\item[$\bullet$] $x_2 e_{\pm(3j+3+r)} = e_{\pm(3j+4+r)}$ for any $j=0,\ldots,m-2$;
\item[$\bullet$] if $n\geq 6$ and $p$ is odd, then $x_2$ acts on $\langle e_{n-1},e_{n}\rangle$ with matrix
\begin{equation}\label{matGamma}
\gamma= \begin{pmatrix}
-1 & 0 \\ a & 1
\end{pmatrix}, 
\end{equation}
and acts on $\langle  e_{-(n-1)}, e_{-n}\rangle$ with matrix $\gamma^T$;
otherwise,  $x_2$ acts on $\langle e_{n-1},e_{n}\rangle$ and on 
$\langle  e_{-(n-1)}, e_{-n}\rangle$ with respective matrices $\gamma^T$ and 
$\gamma$.
\end{itemize}
\item[($y_1$)] The matrix $y_1 \in \SL_{2n}(q)$  fixes $e_{\pm j}$ for all $j>r$ (so $y_1=\I_{2n}$ if $r=0$) and
\begin{itemize}
\item[$\bullet$] if $r\geq 1$, then $y_1 e_1= e_{-1}$ and $y_1 e_{-1} =-(e_1+e_{-1})$;
\item[$\bullet$] if $r=2$, then  $y_1 e_{\pm 2}=e_{\pm 2}$.
\end{itemize}
\item[($y_2$)] The matrix $y_2\in \wSL_n(q)$ fixes $e_{\pm j}$ for all $j\leq r$ and
\begin{itemize}
\item[$\bullet$]  $y_2$ acts on $\langle e_{\pm(r+1)},\ldots,e_{\pm (n-3)}\rangle$ as the permutation
$$\prod_{j=0}^{m-2} \left(e_{\pm(3j+1+r)},e_{\pm(3j+2+r)},e_{\pm(3j+3+r)}\right);$$
\item[$\bullet$] if $n\neq 4,8$, then $y_2$ acts on $\langle e_{n-2},e_{n-1},e_{n}\rangle$ 
respectively with 
matrices
$$\eta_1=\begin{pmatrix} 0 & 0 & 1 \\ 1 & 0 & 0 \\ 0 & 1 & 0 \end{pmatrix} \; \textrm{ if }  p >2,\quad 
\eta_2=\begin{pmatrix} 1 & 1 & 1 \\ 0 & 1 & 0 \\ 1 & 0 & 0 \end{pmatrix}\; \textrm{ if }  p=2 < q, $$
 $$ \eta_3=\begin{pmatrix} 1 & 1 & 0 \\ 1 & 0 & 0 \\ 1 & 1 & 1 \end{pmatrix}\; \textrm{ if }  q=2,$$ 
and acts on $\langle e_{-(n-2)},e_{-(n-1)},e_{-n}\rangle$ with matrix $\eta_i^{-T}$; 
otherwise, $y_2$ acts on $\langle e_{n-2}, e_{n-1},e_{n}\rangle$ and on $\langle e_{-(n-2)},e_{-(n-1)},e_{-n}\rangle$  
with respective matrices  $\eta_1$ and $\eta_1^{-T}$.
\end{itemize}
\end{itemize}
Note that  $x_1x_2=x_2x_1$, $y_1y_2=y_2y_1$ and $x_i^TJx_i=y_i^TJy_i=J$ for $i=1,2$,
whence $x_1,x_2,y_1,y_2 \in \Sp_{2n}(q)$. Thus, 
$$x:=x_1x_2, \quad y:=y_1y_2$$
have respective orders $2$ and $3$, and
$$H:=\langle x,y\rangle $$
is a subgroup of $\Sp_{2n}(q)$. Our aim is to find suitable conditions on $a\in \F_q^*$ such that $H=\Sp_{2n}(q)$. 
For $n=5$ we use the generators given in Figure \ref{gen5}.
\smallskip

An element $g\in \SL_m(q)$ is called a \emph{pseudoreflection} or a \emph{bireflection}
according as its fixed point space has respectively dimension $m-1$
or $m-2$. Transvections are a special case of pseudoreflections.
We call \emph{subgroups of root type} of $\SL_n(q)$ the conjugates of $\I_n+\F_qE_{1,2}$ under $\GL_n(q)$.

\begin{lemma}\label{McLaughlin}
Let $K$ be an irreducible subgroup of $\SL_{2m+1}(q)$ generated
by transvections. Set $\F_q^{2m+1}=U\oplus W$, where  $U$ has dimension  $2$.
Suppose that $K$ contains a subgroup $B$, generated by transvections with center in $U$
and axis containing $W$. Then $B$ fixes $U$ and, if $B_{|U}=\SL_2(q)$, then $K=\SL_{2m+1}(q)$.  
\end{lemma}

\begin{proof}
Let $L$ be the normal closure of $B$ under $K$. If $L$ is reducible,
by Clifford's Theorem $K$ preserves a decomposition
with $\ell$ homogeneous components $T_j$, such that $LT_j=T_j$, $1\leq j\leq \ell$.
The assumption $B_{|U}= \SL_2(q)$ excludes $\ell=2m+1$. 
There exists a transvection $k\in K$ such that $k T_1\neq T_1$.
By Grassmann's formula, $k$ fixes a non zero vector in $T_1$, whence the contradiction $kT_1=T_1$.
Thus $L$ is irreducible. Again by Clifford's Theorem, $L$ has no normal unipotent subgroup.
Noting that $B$ is generated by subgroups of root type,
our claim follows from  \cite[Theorem, page 364]{Mc} if $q>2$ and from \cite{Mc2} 
if $q=2$.
\end{proof}

\begin{theorem} \label{Guralnick-Saxl}
Let $n\geq 7$. Suppose that $H$ contains a subgroup $K_n$ of $\wSL_n(q)N$ which induces $\SL_n(q)$ on $V$. Then  $H=\Sp_{2n}(q)$.
\end{theorem}

\begin{proof} 
For sake of simplicity let us write $V$ for $\sum_{i=1}^n \F e_i$. Thus $V\oplus JV=\F^{2n}$.
We may identify $N$ with the space  $\mathscr{S}$ of $n\times n$ symmetric matrices over $\F$.
Then $K_n$ acts by conjugation on $N$ as $\SL_n(q)$ acts on $\mathscr{S}$ via $X\mapsto A X A^T$.
This action is irreducible, i.e., $N$ is a minimal normal subgroup of $K_n$.

So, if $K_n\cap N\neq \{1\}$, then $N\leq K_n=\wSL_n(q)N$. On the other hand, if 
$K_n\cap N= \{1\}$, then $K_n=\wSL_n(q)^g$ for some $g\in N$. Indeed,
$H^1(\SL_n(q),\mathscr{S})=0$ (see \cite[Table 4.5]{CPS},  \cite[Table page 324]{JP} and \cite[Lemma 2.15]{GS}).
Clearly, to our purposes, we may reduce to the case $K_n=\wSL_n(q)^g$.
From $g \in N$ it follows that $W=g^{-1}JV$ is a complement of $V$ in $\F^{2n}$.
We now show that the only proper subspaces of $\F^{2n}$ fixed by $K_n$ are $V$ and $W$.

It is enough to assume $g=1$. Call $U$ a non-zero $K_n$-invariant subspace of $\F^{2n}$.
If $U\cap V\neq \{0\}$, then $V\leq U$. If $V\neq U$, then $W\cap U\neq \{0\}$, whence
$W\leq U=\F^{2n}$. Similarly, if $U\cap W\neq \{0\}$, we get $U=W$ or $U=\F^{2n}$.
We are left to consider the case  $u=u_1+u_2\in U$ with $0\neq u_1\in V$ and $0\neq u_2\in W$. 
In this case $U$ projects onto $V$ and onto $W$. If the kernel of the projection
onto $W$ is $V$, then $U=\F^{2n}$. Otherwise this kernel is $\{0\}$. Since the projections are linear, 
there exists $M\in \GL_n(\F)$
such that $U=\left\{\begin{pmatrix} v\\ Mv\end{pmatrix}\mid v\in \F^n\right\}$. 
For all  $k=\diag(A_k,A_k^{-T})\in K_n$ and all $v,w\in \F^n$ we have 
$k\begin{pmatrix} v\\ w\end{pmatrix}=\begin{pmatrix} A_k v\\ A_k^{-T}w\end{pmatrix}$.
So, in order that $U$ is $K_n$-invariant, we must have $A_k^{-T}Mv =M A_kv$ for all $v$. 
It follows that the fixed matrix  $M$ coincides with $A_k^T M A_k$ for all $A_k\in \SL_n(q)$, a contradiction. 

From the irreducibility of $H$ it follows that also the normal closure $K_n^H$ of $K_n$ under $H$ is irreducible. 
If not, applying Clifford's Theorem to the normal subgroup $K_n^H$,  the group $H$ must permute $V$ and $W$.
In particular $yV=V$ and $yW=W$.  It follows 
$r=0$  and $W=xV$, which is in contradiction with $V\cap xV\neq \{0\}$.

Now, since $\wSL_n(q)$ is  generated by bireflections, we can apply \cite[Theorem 10.2]{GS} to $K_n^H$, 
obtaining the following possibilities. \\[1pt]
\noindent (a) $K_n^H=\Sp_{2n}(q_0)$ acting on its natural module $\F_{q_0}^n$, for some $q_0\leq q$.
Since $\tr(xy)=a$ if $p>2$ and $\tr(xy)=a+1$ if $p=2<q$, from $\F_p[a]=\F_q$ we get $q_0=q$, whence $H=\Sp_{2n}(q)$.\\[1pt]
(b) $V+W=V_1\oplus \ldots \oplus V_n$, where each $V_i$ has dimension $2$, and $K_n^H$ permutes the subspaces $V_i$
as $\Sym(n)$. This can easily be excluded by order reasons. Or else, one may consider that
neither $\SL_n(q)$ nor $\PSL_n(q)$ can embed into $\Sym(n)$, which has a non-trivial representation
of degree $n-1$ over $\F_q$. \\[1pt]
(c) $q$ is even and $K^H$ is an alternating or symmetric group, acting on its deleted permutation module.
This case can easily be excluded by order reasons.\\[1pt]
(d) $q$ is even and  $K_n^H=\Omega_{2n}^\pm(q_0)$ for some $q_0\leq q$. 
Let $Q : \F^{2n}\to \F$ be a non-degenerate quadratic form fixed
by $K_n^H$.  The Gram matrix, with respect to the canonical basis, of the corresponding symmetric 
bilinear form  must be $J$, by the absolute irreducibility of $K_n^H$. 
For any linearly independent $v_1,v\in V$ 
we get $f(v_1)=f(v)$ by the transitivity of $\SL_n(q)$ on $V\setminus \{0\}$. 
As $V$ is totally isotropic, $Q(v_1)= Q(v_1+v)=Q(v_1)+Q(v)$, whence $Q(v)=0$ for all
$v\in V$.  Thus, for all $u\in \F^{2n}$, we get  $f(u)=\pi_1(u)^T\pi_2(u)$, where $\pi_1$ and $\pi_2$
denote the respective  projections on $V$ and $JV$. 
If $r=0$, from $xe_1= e_1+ e_{-1}$ we get the contradiction $0=Q(e_1)= Q(e_1+e_{-1})=1$.
Otherwise, from $y e_{-1}=e_1+e_{-1}$ we get the same.
\end{proof}

In what follows it is convenient to depict the last $n-r$ basis vectors  of $V$ organized in $m$ ``rows'' of $3$ 
vectors each.
So, row $m$ consists of $e_{n-2}, e_{n-1},e_{n}$. 
Each row is $y_2$-invariant. Moreover,  $x_2$ swaps the third point of each row with the first point
of the next one.

\begin{lemma}\label{extension}
Suppose $n\geq 6$ and let $K$ be a subgroup of the stabilizer of $V$ in $\Sp_{2n}(q)$
which  fixes $S_\ell$, for some $\ell\geq 4$, inducing $\{\id\}$  on $V/S_{\ell}$ and $\SL_{\ell}(q)$ on  $S_{\ell}$.
Take $g\in \Sp_{2n}(q)$ satisfying the following conditions:
\begin{itemize}
\item[\rm{(i)}] $g^{-1} e_{n-\ell+1}= e_{n-\ell}$ and, either
\item[\rm{(ii)}] $g S_{\ell+1} = S_{\ell+1}$, or
\item[\rm{(iii)}] $g  S_{\ell-1}=S_{\ell -1}$ and $ge_{n-\ell+1}=e_{n-\ell-1}$.
\end{itemize}
Then $\langle K, K^g\rangle$ fixes $S_{\ell +1}$, inducing $\SL_{\ell +1}(q)$ on  $S_{\ell +1}$. 
\end{lemma}

\begin{proof} 
Take $k\in K$ and $v\in S_m$ with $m\geq \ell$.
We have $kv-v\in S_\ell\leq S_{m}$. Thus $K$ fixes $S_{m}$ for all $m\geq \ell$.
In particular $KS_{\ell +1}=S_{\ell +1}$. We claim that $\langle K, K^g\rangle$  fixes $S_{\ell+1}$.
This is clear in case (ii). In case  (iii), from $S_\ell=\langle e_{n-\ell+1},S_{\ell-1} \rangle$
if follows  $g^{-1}S_\ell=\langle e_{n-\ell}, S_{\ell -1} \rangle\leq S_{\ell+1}$. 
For each $k\in K$ there exists $u=u_k \in S_{\ell}$ such that
$$(g^{-1} k g)  e_{n-\ell +1} = g^{-1} k e_{n-\ell-1} = g^{-1}(e_{n-\ell-1}+u)=
e_{n-\ell+1} + g^{-1}u \in S_{\ell +1}.$$
Thus, we can set $\Gamma_1=K_{|S_{\ell+1}}$, $\Gamma_2=(K^g)_{|S_{\ell+1}}$ and $\Gamma=\langle \Gamma_1, \Gamma_2\rangle \leq \SL_{\ell+1}(q)$.

With respect to the decomposition $S_{\ell+1}=\langle e_{n-\ell}\rangle \oplus S_\ell$, 
we have $\Gamma_1\leq AB$, where
$$ A=\begin{pmatrix}
1 & 0\\
0 & \SL_\ell(q)
\end{pmatrix},\quad  B=\left\{\begin{pmatrix}
1 & 0\\
v & \I_\ell
\end{pmatrix}\mid v\in \F_q^\ell\right\} \leq \SL_{\ell+1}(q).$$ 
Note that $B$ is a minimal normal subgroup of $AB$, as the conjugation action of $AB$ on $B$ coincides with the natural action of
$\SL_\ell(q)$ on $\F_q^\ell$, which is transitive on the non-zero vectors. 
Thus, if $\Gamma_1 \cap B\neq \left\{\I_{\ell+1}\right\}$ we have $B\leq \Gamma_1=AB$.
On the other hand, if $\Gamma_1 \cap B=\left\{\I_{\ell+1}\right\}$, there exists $b^{-1} \in B$ such
that $(\Gamma_1)^b=A$, by the vanishing of the first cohomology group $H^1\left(\SL_\ell(q), \F_q^\ell\right)$.
In both cases, $\Gamma_1$ is generated by subgroups of root type. Moreover, 
the only subspace fixed by $\Gamma_1$, if any, must have dimension $1$ and be generated by $e_{n-\ell}+s$ for some $s \in S_{\ell}$.
Now, let $U$ be a subspace of $S_{\ell+1}$ fixed by $\Gamma$. If $\dim(U)\geq 2$, then $U\cap S_{\ell}\neq \{0\}$
and $U\cap g^{-1}S_{\ell}\neq \{0\}$ by Grassmann's formula.
If follows that $S_{\ell}\leq U$ and $g^{-1}S_{\ell}\cap S_{\ell+1}\leq U$, whence $U=S_{\ell+1}$.
If $\dim(U)=1$, then $U$ is generated by $e_{n-\ell}+s$, a contradiction as this space is not fixed by $\Gamma_2$.
It follows that $\Gamma$ is irreducible.

Call $L$ the normal closure of $\Gamma_1$ in $\Gamma$.
If $L$ is reducible, by Clifford's Theorem there exists a decomposition $S_{\ell+1}=\sum\limits_{i=1}^k W_i$
preserved by $\Gamma$, for some $k>1$, where  each homogeneous component $W_i$ is fixed by  $\Gamma_1$.
If $\dim(W_1)\geq 2$, then $W_1\cap S_\ell \neq \{0\}$, whence the contradiction $S_\ell\leq W_1$.
Then $\dim(W_1)=1$, with implies that  $\Gamma$ is abelian, a contradiction. Thus $L$ is irreducible.
Moreover, $L$ cannot be contained
in $\Sp_{\ell+1}(q)$ as $S_\ell$ should be a totally isotropic subspace.
We conclude $\Gamma=\SL_{\ell+1}(q)$ by  \cite[Theorem, page 364]{Mc} if $q\neq 2$, 
by \cite{Mc2} if $q=2$.
\end{proof}

To prove the next result, it is useful to introduce the following.

\begin{defin}\label{Kell}
For each $\ell\leq n$ we call $\K_\ell$ any subgroup of $H$ with the following properties: 
\begin{itemize}
\item[$(\mathrm{a}_\ell)$] $\K_\ell$ fixes $S_\ell$ inducing $\SL_\ell(q)$;
\item[\rm{($b_\ell$)}] $\K_\ell$ induces the identity on $\frac{W_\ell}{S_\ell}$, where 
$W_\ell=V+\left\langle e_{-i}\mid 1\leq i\leq n-\ell\right\rangle$.
\end{itemize}
\end{defin}
Note that  $\K_\ell$ fixes $S_m$ for all $\ell+1\leq m\leq n$. In particular $\K_\ell$ fixes $S_n=V$.

\begin{theorem} \label{wSLell}
Let $n\geq 7$ and assume that $H$ contains a subgroup $\K_b$ for some $4\leq b \leq n$.
Then $H=\Sp_{2n}(q)$.
\end{theorem}

\begin{proof}
Let $\ell\geq 4$ be the largest integer such that $H$ contains a subgroup $\K_\ell$. 
If we can show that $\ell =n$, our claim  follows from Theorem \ref{Guralnick-Saxl}. 
So, suppose  $\ell < n$.
Note that if $y^{-1} S_\ell=S_\ell$ then $xe_{n-\ell+1}= e_{n-\ell}$, except when $(r,\ell)=(2,n-1)$.
Otherwise  $y^{-1} e_{n-\ell+1}=e_{n-\ell}$.
Taking $g=x$ in the first case, 
$g=y$ in the second, by Lemma \ref{extension} the subgroup $\langle \K_\ell,\K_\ell^g\rangle$ satisfies $(\mathrm{a}_{\ell+1})$,
except when $(r,\ell)=(2,n-2)$.
Assuming first $(r,\ell)\not \in \{ (2,n-2),(2,n-1)\}$, we show that $\langle \K_\ell,\K_\ell^g\rangle$ satisfies
also $(\mathrm{b}_{\ell+1})$, proving that it is a subgroup $\K_{\ell+1}$, in contrast with the maximality of $\ell$.
We will deal with these two exceptional cases at the end.

For all $v \in R=\langle e_j,e_{-j}\mid  1\leq j\leq n-\ell-1\rangle$, there exists
$w \in R$ such that $gv=w$. 
Using assumption $(\mathrm{b}_\ell)$, for any $k\in \K_\ell$  we obtain 
$$k^g e_{\pm j} -e_{\pm j} = g^{-1} k e_{\pm h} - g^{-1} e_{\pm h}= g^{-1} (ke_{\pm h}-e_{\pm h})\in g^{-1} S_\ell\leq S_{\ell+1},$$
proving $(\mathrm{b}_{\ell+1})$ for $\langle \K_\ell,\K_\ell^g\rangle$.

We now consider the two exceptional cases. So, assume $r=2$.
If $\ell=n-1$, there exists  $k\in \K_\ell$ such that $k e_{2}=e_4$.
From $xe_4= e_1$ we conclude that $\langle \K_\ell, \K_\ell ^x\rangle $  satisfies $(\mathrm{a}_n)$
and then it is a subgroup $\K_n$.
If $\ell=n-2$,  $\K_{n-2}$ contains a subgroup $T$ that induces $\SL_{n-3}(q)$ on the 
subspace $\langle e_3, S_{n-4}\rangle$.
Since  $x$ fixes $S_{n-4}$ and swaps $e_3$ with $e_2$,  it follows that $L=\langle T, T^x\rangle$ induces
$\SL_{n-2}(q)$ on $\langle e_2,e_3, S_{n-4}\rangle$.
Now, $y^{-1}$ fixes $e_2$ and $y^{-1}\langle  e_3, S_{n-4}\rangle=S_{n-3}$: this means that
$M=\langle L,L^y\rangle$ induces  $\SL_{n-1}(q)$ on $S_{n-1}$.
Finally, $\langle M,M^x\rangle =\K_n$.
\end{proof}

Given $g\in \Mat_\ell(\F)$, $\lambda \in \F$ and  a $g$-invariant subspace $L$ of $\F^{\ell}$, we define
\begin{equation}\label{eigen}
L_\lambda(g) =\{ w \in W \mid  gw = \lambda w\}. 
\end{equation}

\begin{lemma}\label{HT} 
Let $U$ be a proper $H$-invariant subspace of the symplectic space $\V$
and $\overline U$ be any $H^T$-invariant complement of  $U$. Let $\lambda$ be an eigenvalue
of some $g\in H$.
\begin{itemize}
\item[(1)] If $U$ has dimension $k$, then $\V$  has also  an $H$-invariant subspace of dimension $2n-k$. 
In particular we may assume $\dim U\leq n$.
\item[(2)] If the eigenspace $\V_{\lambda^{-1}}(g)$ is not contained
in any proper $H$-invariant subspace of $\V$, then $U\cap \V_\lambda(g)\neq \{0\}$.
\end{itemize}
\end{lemma}

\begin{proof}
(1)  From $H^T=H^J$ and $J^2=-\I_{2n}$ we get that the subspace $\overline{U}$ is $H^T$-invariant if and only if
 $J\overline{U}$ is $H$-invariant. So,  $J\overline{U}$ is an $H$-invariant subspace of dimension $2n-\dim(U)$.\\
(2) By sake of contradiction,  assume $U\cap \V_\lambda(g)=\{0\}$.
Then $g_{|U}$ does not have the eigenvalue $\lambda$, whence $\V_\lambda(g^T)\leq \overline{U}$.
From $g^T=\left(g^{-1}\right)^J$ we get $\overline U \geq \V_\lambda(g^T)=J\V_\lambda(g^{-1})=
J\V_{\lambda^{-1}}(g)$. Hence, $\V_{\lambda^{-1}}(g)$ is contained in
the proper $H$-invariant subspace $J\overline{U}$, a contradiction.
\end{proof}

\begin{lemma}\label{campoN}
Let $q=p^f$ with $f>1$,  and let $g(t)\in \F_p[t]$ be a monic polynomial of degree $s\geq 2$
such that $g(0)=0$.
Denote by $\N=\N(q)$ the number of elements $b \in \F_q^*$ such that
$\F_p[g(b)]\neq \F_q$. Then
$$\N\leq s \cdot p  \frac{ p^{\left \lfloor f/2\right \rfloor} -1}{p-1}.$$
\end{lemma}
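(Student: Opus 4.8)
The plan is to split the count into two independent estimates: how many values $g(b)$ can land in a proper subfield of $\F_q$, and, for each such value, how many $b$ map to it. Writing $q=p^f$, I would introduce
$$\ClS=\{c\in\F_q : \F_p[c]\neq \F_q\},$$
the set of elements of $\F_q$ that generate a proper subfield over $\F_p$. Since $\F_p[g(b)]=\F_p[c]$ for $c=g(b)$, the defining condition $\F_p[g(b)]\neq\F_q$ is precisely $g(b)\in\ClS$, so $\N=|\{b\in\F_q^*: g(b)\in\ClS\}|$.

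For the fibre count I would fix $c\in\ClS$ and note that $g(t)-c$ is monic of degree $s$, hence has at most $s$ roots in $\F_q$; summing over $c$ gives the first inequality
$$\N\le \sum_{c\in\ClS}\bigl|\{b\in\F_q: g(b)=c\}\bigr|\le s\,|\ClS|.$$
It then remains to bound $|\ClS|$. An element $c\in\F_q$ lies in $\ClS$ if and only if its degree $d$ over $\F_p$ is a proper divisor of $f$, that is, $c\in\F_{p^d}$ for some $d\mid f$ with $d<f$; thus $\ClS=\bigcup_{d\mid f,\,d<f}\F_{p^d}$ and a union bound yields $|\ClS|\le\sum_{d\mid f,\,d<f}p^d$.

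The one genuinely arithmetic observation is that every proper divisor of $f$ is at most $\lfloor f/2\rfloor$: the largest proper divisor is $f/\ell$ with $\ell$ the smallest prime dividing $f$, and a short check ($\ell=2$ when $f$ is even, while $\ell\ge 3$ forces $f/\ell\le f/3\le(f-1)/2$ when $f$ is odd, using $f\geq 3$) confirms $f/\ell\le\lfloor f/2\rfloor$. Consequently the divisors appearing in the sum form a subset of $\{1,\dots,\lfloor f/2\rfloor\}$, and since all terms are positive,
$$|\ClS|\le\sum_{d\mid f,\,d<f}p^d\le\sum_{i=1}^{\lfloor f/2\rfloor}p^i=p\,\frac{p^{\lfloor f/2\rfloor}-1}{p-1}.$$
Combining this with the fibre bound gives $\N\le s\cdot p\,(p^{\lfloor f/2\rfloor}-1)/(p-1)$, as required.

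I do not expect a substantive obstacle here: both estimates are elementary, the root bound is just the degree bound for $g(t)-c$, and the only point requiring care is the divisor inequality above. The deliberate overcounting in the union bound (ignoring overlaps among the subfields $\F_{p^d}$) is harmless and is exactly what turns the argument into the clean closed form of the statement rather than a sharper but messier estimate.
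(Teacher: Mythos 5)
Your proof is correct and follows essentially the same route as the paper's: bound the fibre $\{b : g(b)=c\}$ by $s$ via the degree of $g(t)-c$, observe that the union of proper subfields of $\F_q$ has at most $p+p^2+\cdots+p^{\lfloor f/2\rfloor}$ elements, and multiply. The only difference is that you spell out the divisor bookkeeping the paper compresses into one sentence (and your check that proper divisors are at most $\lfloor f/2\rfloor$ could be shortened: $d\mid f$, $d<f$ forces $f/d\geq 2$, so $d\leq f/2$).
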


\begin{proof}
For every $c\in \F_q$ there are at most $s$ elements $b\in \F_q^*$ such that $g(b)=c$.
Considering the possible orders of the subfields of $\F_q$ we have
$$\N\leq s\left(p+p^2+\ldots+ p^{\left \lfloor f/2\right \rfloor}\right)=s\cdot p\frac{ p^{\left \lfloor f/2\right \rfloor} 
-1}{p-1}.$$
\end{proof}

\begin{lemma}\label{minimal field} 
Let $\beta\in \F_{q^2}$ be primitive, namely $\F_p[\beta]=\F_{q^2}$, and let  $\alpha=\beta+\beta^q$.
If $\beta^{q+1}\in \F_p[\alpha]$, then $\alpha$ is a primitive element of $\F_q$.
Moreover, if $\gamma\in \F_{q^2}$ has order $q+1$, then $\gamma$ is primitive, 
and $\gamma^3$ is primitive in $\F_{q^2}$ whenever $q\neq 2,5,8$.
\end{lemma}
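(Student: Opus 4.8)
The statement has two independent parts, and I would treat them separately. For the first part, the plan is to work inside the tower $\F_p \subseteq \F_q \subseteq \F_{q^2}$ and exploit that $\alpha = \beta + \beta^q$ is the trace of $\beta$ from $\F_{q^2}$ to $\F_q$, while $\beta^{q+1}$ is the corresponding norm. Since $\beta$ and $\beta^q$ are the two roots of the quadratic $t^2 - \alpha t + \beta^{q+1}$ over $\F_q$, the field $\F_p[\beta]$ is generated over $\F_p[\alpha]$ by adjoining $\beta$, and the degree of this extension is at most $2$. The hypothesis $\beta^{q+1} \in \F_p[\alpha]$ ensures that the quadratic has coefficients in $\F_p[\alpha]$, so $\F_p[\beta]$ is contained in a degree-$2$ extension of $\F_p[\alpha]$; hence $[\F_{q^2}:\F_p[\alpha]] \le 2$. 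On the other hand $\alpha \in \F_q$, so $\F_p[\alpha] \subseteq \F_q$ and $[\F_{q^2} : \F_p[\alpha]] \ge [\F_{q^2}:\F_q] = 2$. Combining the two bounds forces $\F_p[\alpha] = \F_q$, which is exactly the claim that $\alpha$ is primitive in $\F_q$.

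For the second part, suppose $\gamma \in \F_{q^2}$ has order $q+1$. To show $\gamma$ is primitive, i.e. $\F_p[\gamma] = \F_{q^2}$, I would argue that $\gamma$ cannot lie in any proper subfield. A proper subfield $\F_{p^d}$ of $\F_{q^2} = \F_{p^{2f}}$ has multiplicative group of order $p^d - 1$ with $d \mid 2f$, $d < 2f$; if $\gamma \in \F_{p^d}^*$ then $(q+1) = \mathrm{ord}(\gamma)$ divides $p^d - 1$. The key arithmetic point is that $q+1 = p^f + 1$ shares only small common factors with $p^d - 1$ for $d < 2f$: indeed $\gcd(p^f+1, p^d-1)$ can be controlled using that $p^{2f} - 1 = (p^f-1)(p^f+1)$ and that the order of $p$ modulo any prime dividing $q+1$ is essentially $2f$. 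I expect the cleanest route is to note that an element of order $q+1$ generates a subgroup not contained in $\F_q^* = \F_{p^f}^*$ (since $\gcd(q+1,q-1) \le 2$), and then rule out the remaining intermediate fields by a divisibility check on the orders.

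The final assertion — that $\gamma^3$ is primitive in $\F_{q^2}$ whenever $q \neq 2,5,8$ — reduces to showing $\gamma^3$ still has an order that no proper subfield can accommodate. Since $\gamma$ has order $q+1$, the order of $\gamma^3$ is $(q+1)/\gcd(3,q+1)$, which is either $q+1$ or $(q+1)/3$. When $3 \nmid q+1$ we are back in the previous case. When $3 \mid q+1$, the order drops to $(q+1)/3$, and I would again run the subfield-divisibility argument; the excluded values $q = 2,5,8$ are precisely the small cases where $(q+1)/3$ becomes too small to force primitivity (for these $q$, $(q+1)/3 \in \{1,2,3\}$ can divide $p^d-1$ for a proper subfield). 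The main obstacle I anticipate is this last step: carefully verifying that for all $q \neq 2,5,8$ with $3 \mid q+1$ the reduced order $(q+1)/3$ avoids every $p^d - 1$ with $d$ a proper divisor of $2f$, which amounts to a finite but slightly delicate case analysis on the small subfields, rather than the cleaner trace/norm argument of the first part.
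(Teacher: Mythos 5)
Your first part is essentially the paper's own proof: both observe that $\beta$ satisfies $t^2-\alpha t+\beta^{q+1}$ over $\F_p[\alpha]$ (this is where the hypothesis $\beta^{q+1}\in\F_p[\alpha]$ enters), deduce $[\F_{q^2}:\F_p[\alpha]]\leq 2$, and conclude from $\F_p[\alpha]\leq\F_q$. For the statements about $\gamma$ and $\gamma^3$ you take a genuinely different, more hands-on route. The paper never enumerates subfields: for $\gamma$ it sets $s_1=[\F_{q^2}:\F_p[\gamma]]$ and notes that $(q+1)^{s_1}\leq q^2$ forces $s_1=1$; for $\gamma^3$ it first bounds $s=[\F_{q^2}:\F_p[\gamma^3]]\leq 3$ via the cubic $t^3-\gamma^3$, then eliminates $s=3$ by $(q+1)^3/27\leq q^2$ (so $q\leq 23$, and since for prime $q$ the field $\F_{q^2}$ has no subfield of index $3$, only $q=8$ survives) and eliminates $s=2$ by noting $(q+1)/3$ divides $\gcd(q+1,q-1)\leq 2$ (so $q=2,5$). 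Your plan instead runs over all proper subfields $\F_{p^d}$, $d\mid 2f$, $d<2f$, and asks when $(q+1)/3$ divides $p^d-1$; this does close, but the ``delicate case analysis'' you defer is exactly where the content lies, so let me record how it goes: if $d\mid f$ then $p^d-1\mid p^f-1$, so $(q+1)/3$ divides $\gcd(q-1,q+1)\leq 2$, giving $q\in\{2,5\}$; if $d\nmid f$ then $2f/d$ is odd, hence $d\leq 2f/3$ and $(q+1)/3\leq p^{2f/3}-1<q^{2/3}$, which again yields $q\leq 23$, and a short check over $q=p^f\leq 23$ with $f>1$ leaves only $q=8$ (where $\gamma^3$ has order $3$ and lies in $\F_4\subset\F_{64}$). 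One caution: your heuristic that the order of $p$ modulo a prime dividing $q+1$ is ``essentially $2f$'' is not literally true --- for $p=2$, $f=3$ one has $3\mid q+1$ while $2$ has order $2$ modulo $3$ --- and this failure is precisely what produces the exception $q=8$, so the divisibility bookkeeping above, not the heuristic, must carry the argument. What the paper's degree-bound trick buys is that subfields of index $\geq 4$ never need to be considered at all; what your route buys is a single uniform criterion (order versus $p^d-1$) covering both $\gamma$ and $\gamma^3$, including your correct quick disposal of $\gamma$ itself, since every proper subfield has $d\leq f$ and $p^d-1\leq q-1<q+1$.
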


\begin{proof}
Clearly, $\beta$ is a root of $t^2-\alpha t+\beta^{q+1}$ with coefficients in $\F_p[\alpha]$. It follows 
$\left[\F_p[\beta]:\F_p[\alpha]\right] \leq 2$. From $\F_p[\beta]=\F_{q^2}$ and $\F_p[\alpha]\leq \F_q$ we get $\F_p[\alpha]=\F_q$.
Set $s_1=[\F_{q^2}: \F_p[\gamma]]$.  It follows that $(q+1)^{s_1}\leq q^2$, whence $s_1=1$, namely $\gamma$ is primitive.
Furthermore, the same happens if $(3,q+1)=1$. So, assume $q+1=3k$.
Since $\gamma$ satisfies $t^3-\gamma^3$ over $\F_p[\gamma^3]$, we have $s:=[\F_{q^2}: \F_p[\gamma^3]]\leq 3$.
If $s=3$, then $\frac{(q+1)^3}{27}\leq q^2$ whence $q\leq 23$.
Now, if $q$ is a prime, $\F_{q^2}$ does not have subfields of index $3$, giving the only possibility $q=8$.
If $s=2$, then $\F_p[\gamma^3]=\F_q$. It follows that $\frac{q+1}{3}=k$ divides $(q+1,q-1)$, whence $q=2,5$.
\end{proof}

Our direct computations are based on a result due to Liebeck, Praeger and Saxl.
Given a finite group $B$, let $\pi(B)$ be the set of the prime divisors of $|B|$.
For simplicity, if $b \in B$,  we write $\pi(b)$ for $\pi(\langle b \rangle)$.

\begin{lemma}\label{LPS}
Suppose that $A$ is a subgroup of a group $B$ such that  $\pi(A)=\pi(B)$.
\begin{itemize}
 \item[(1)] If $B=\PSL_n(q)$ with $n\geq 5$ and $(n,q)\neq (6,2)$,  then $A=B$.
 \item[(2)] If $B=\PSp_{2n}(q)$ with $n\geq 4$,  then either $A=B$ or $n$ and $q$ are even and $\Omega_{2n}^-(q)\unlhd A$.
\end{itemize}
\end{lemma}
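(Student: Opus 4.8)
The plan is to reduce to the case where $M$ is a maximal subgroup and then run through the Aschbacher classification of maximal subgroups of the classical group $G$, using the hypothesis $\varpi(M)=\varpi(G)$ to force the presence in $M$ of elements of prescribed large prime order. The reduction is immediate: if $M\neq G$ we may replace $M$ by any maximal subgroup $M^*\geq M$, since $\varpi(G)\supseteq \varpi(M^*)\supseteq \varpi(M)=\varpi(G)$ still gives $\varpi(M^*)=\varpi(G)$. Thus it suffices to show that no proper maximal subgroup---apart from the stated $\Omega_{2n}^-(q)$ in case (2)---can have $\varpi(M)=\varpi(G)$.

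The engine of the argument is Zsygmondy's theorem on primitive prime divisors. Write $r_k$ for a primitive prime divisor of $q^k-1$, that is, a prime dividing $q^k-1$ but no $q^j-1$ with $j<k$; such $r_k$ exists for $k\geq 3$ unless $(q,k)=(2,6)$, and for $k=2$ unless $q+1$ is a $2$-power. The point is that an element of order $r_k$ acts on the natural module with an irreducible constituent of dimension exactly $k$, since the multiplicative order of $q$ modulo $r_k$ is $k$ and so its minimal polynomial over $\F_q$ has an irreducible factor of degree $k$. Consequently $r_k$ can divide $|M|$ only for a restricted list of maximal $M$: irreducible ones, and essentially only those in the field-extension class $\mathcal{C}_3$ whose extension degree divides $k$, together with the normalizer of a Singer-type torus and a short list of almost simple ($\mathcal{S}$) examples.

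For case (1) I would exploit $r_n$ and $r_{n-1}$ simultaneously. Because $n\geq 5$ and $(n,q)\neq(6,2)$, the divisor $r_n$ of $q^n-1$ exists, and $\varpi(M)=\varpi(G)$ forces $M$ to contain an element of order $r_n$; this already makes $M$ irreducible and, if proper, pins it to a $\mathcal{C}_3$-subgroup $\GL_{n/k}(q^k).k$ with $k\mid n$ and $k>1$, a Singer normalizer, or one of finitely many $\mathcal{S}$-cases. The crucial observation is that none of these can also accommodate an element of order $r_{n-1}$: such an element needs an $(n-1)$-dimensional irreducible constituent, whereas in $\GL_{n/k}(q^k).k$ the condition $r_{n-1}\mid |M|$ would force $k\mid n-1$, contradicting $k\mid n$ with $k>1$, and in a Singer normalizer of order $n(q^n-1)$ the prime $r_{n-1}\geq n$ simply does not divide the order. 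Handling the finitely many $\mathcal{S}$-candidates, and the boundary situation where $r_{n-1}$ fails to exist (notably $n=7$, $q=2$, where one substitutes $r_5$ or $r_7$), then eliminates every proper $M$ and yields $M=G$; the exclusion $(n,q)=(6,2)$ is exactly the Zsygmondy failure that prevents running the scheme at $r_n$.

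Case (2) follows the same template with the factors $q^{2i}-1$, so that the governing primitive prime divisors come from $q^{2n}-1$ and $q^{2n-2}-1$, along with $q^n+1$. The one genuinely new phenomenon is that in even characteristic the orthogonal group $\Omega_{2n}^-(q)$ sits inside $\Sp_{2n}(q)$ and carries a torus of order $q^n+1$, hence contains elements whose order is a primitive prime divisor of $q^{2n}-1$; it therefore survives the ppd test, and a direct order comparison shows that when $n$ and $q$ are both even it has \emph{exactly} the same prime divisors as $\Sp_{2n}(q)$, while for $q$ odd it is not a subgroup and for $q$ even with $n$ odd some prime of $\varpi(G)$ is lost. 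I expect the main obstacle to be precisely this $\mathcal{S}$-class and small-field bookkeeping, together with the verification that $\Omega_{2n}^-(q)$ is the \emph{unique} proper exception: for every other irreducible or field-extension subgroup one must exhibit a prime in $\varpi(G)$ absent from its order, and separately confirm the $\varpi$-equality for the orthogonal exception. This case-by-case elimination is exactly the content of the Liebeck--Praeger--Saxl analysis that we invoke.
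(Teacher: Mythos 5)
Your proposal is, in substance, the paper's proof: the paper disposes of this lemma with a one-line citation to Liebeck--Praeger--Saxl \cite[Corollary 5 and Table 10.7]{LPS}, and your sketch of the primitive-prime-divisor/Aschbacher elimination explicitly delegates the decisive case-by-case work to that same analysis, so the two arguments coincide. One small caveat inside your sketch: the claim that $r_{n-1}$ never divides the order $n(q^n-1)$ of a Singer normalizer can fail when $r_{n-1}=n$ (e.g.\ $n=5$, $q=2$, where $r_4=5$ divides $5(2^5-1)$), although such subgroups are still ruled out by other primes of $\varpi(G)$, so the conclusion is unaffected.
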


\begin{proof}
It follows from \cite[Corollary 5 and Table 10.7]{LPS}.
\end{proof}

For the classification of the maximal subgroups of the finite quasi-simple classical groups, we refer to \cite{Ho,KL}.

\section{Case $n=4$}\label{caso n=4}

To acquaint the reader with our definitions of Section \ref{sec2}, we start with the case $n=4$, 
in which our generators, with respect to $\{e_1,\dots,e_4, e_{-1}\dots e_{-4}\}$, act
as showed in Figure \ref{gen4}.
Actually  Vasilyev and  Vsemirnov \cite{Vse2,Sp10} provided $(2,3)$-generators for
$\Sp_8(p)$ and $\Sp_{10}(p)$. However, aiming to uniform generators,
we use their result only for $p=2$. 

\begin{figure}[ht]
\begin{footnotesize}
$$x=
\begin{pmatrix}
 0 & 1 & 0 & 0 & 0 & 0 & 0 & 0\\
 1 & 0 & 0 & 0 & 0 & 0 & 0 & 0\\
 0 & 0 & -1 & a & 0 & 0 & 0 & 0\\
 0 & 0 & 0 & 1 & 0 & 0 & 0 & 0\\
 0 & 0 & 0 & 0 & 0 & 1 & 0 & 0\\
 0 & 0 & 0 & 0 & 1 & 0 & 0 & 0\\
 0 & 0 & 0 & 0 & 0 & 0 & -1 & 0\\
 0 & 0 & 0 & 0 & 0 & 0 & a & 1
\end{pmatrix},\quad
y=\begin{pmatrix}
 0 & 0 & 0 & 0 & -1 & 0 & 0 & 0\\
 0 & 0 & 0 & 1 & 0 & 0 & 0 & 0\\
 0 & 1 & 0 & 0 & 0 & 0 & 0 & 0\\
 0 & 0 & 1 & 0 & 0 & 0 & 0 & 0\\
 1 & 0 & 0 & 0 & -1 & 0 & 0 & 0\\
 0 & 0 & 0 & 0 & 0 & 0 & 0 & 1\\
 0 & 0 & 0 & 0 & 0 & 1 & 0 & 0\\
 0 & 0 & 0 & 0 & 0 & 0 & 1 & 0
\end{pmatrix}.$$
\end{footnotesize}
\caption{Generators for $n=4$ and $q>2$.}\label{gen4}
\end{figure}

A direct calculation gives that the characteristic polynomial $\chi_{[x,y]}(t)$ of $[x,y]$ is:
$$t^8 + 2t^7 + t^6 + 2t^5 + 4t^4 + 2t^3 + t^2 + 2t + 1=(t+1)^4   (t^2 - t + 1)^2.$$
Moreover, when $a\neq 0$, the eigenspace $\V_{-1}\left([x,y]\right)$ 
is generated by
$w_1$ and $xw_1=w_2$, where
$$w_1= (1,0,-a^{-1},0,0,0,-a^{-1},0)^T\equad w_2=(0,1, a^{-1} ,0,0,0,a^{-1},-1)^T.$$

\begin{lemma}\label{M=H} 
Suppose that $a \in\F_q^*$ satisfies all the following conditions:
$$\mathrm{(i)}\quad a^2+3\neq 0;\qquad \mathrm{(ii)}\quad a^4-a^2+4\neq 0.$$
Then the group $H=\langle x,y\rangle$ is absolutely irreducible.
\end{lemma}

\begin{proof}
Our first claim is that $\V_{-1}([x,y])$ is not contained in any proper $H$-invariant subspace of 
$\V=\F^8$. Indeed, set  $u_\pm=w_1\pm w_2$ and consider the matrix 
$M_\pm \in \Mat_8(\F)$ whose columns are the images of $u_\pm$ under
$\I_{8}, y,y^2, (y^2x)^2, (y^2x)^3, (y^2x)^4, (y^2x)^5,$ $(y^2x)^6$.
Then, $\det(M_+)=(a^2+3 )^4$ and $\det(M_-)=\frac{(a^4-a^2+4 )^4}{a^8}$.
Hence, under conditions (i) and (ii), our claim is proved.
Now, let $U$ be a proper $H$-invariant subspace of $\V$.
By Lemma \ref{HT}(2), we have $\V_{-1}([x,y])\cap U\neq \{0\}$.
As $xw_1=w_2$, this implies that  $U$ contains at least one of the vectors $u_{\pm}$.
By the above, we conclude that $U=\V$, and so  $H$ is absolutely irreducible.
\end{proof}

Conditions (i) and (ii) in the previous lemma are also necessary. 
In fact, for $u=u_+$ when $a^2+3=0$ and $u=u_-$ when $a^4-a^2+4=0$, the subspace
generated by $u$ and its images under $y,y^2,xy^2,yxy^2 $  is $H$-invariant.

We will need the characteristic polynomial of $xy^2$:
\begin{equation}\label{trace}
\chi_{xy^2}(t) = t^8 - a t^7 + a t^5 - (a^2 + 1) t^4 + a t^3 - a t + 1.
\end{equation}

\begin{lemma}\label{tensor primitive} 
If $H$ is absolutely irreducible, then it is primitive.
\end{lemma}

\begin{proof}
Suppose first that $H$ preserves a decomposition  $\Vb=\F_q^8=V_1\oplus \ldots \oplus V_\ell$, $\ell >1$. 
By the irreducibility, the permutation action must
be transitive. By \cite[Table 8.48]{Ho} we can reduce our analysis to the cases  $\ell\in\{2,4\}$.
Furthermore, we may assume $yV_1=V_1$ and $xV_1=V_2$.\\
\noindent \textbf{Case $\ell=2$.}
Then $yV_2=V_2$. It follows $\tr (xy^2)=0$ in contrast with  \eqref{trace}.\\
\noindent \textbf{Case $\ell=4$.} We may assume $V_3=yV_2$, $V_4=yV_3$ and
either (1) $xV_3=V_4$ or (2) $xV_3=V_3$ and $xV_4=V_4$.
In case (1), $xy^{2}$ induces the permutation $(V_1,V_2,V_3)$. Hence, with respect to a basis conform to
the above decomposition,  it acts on $V$ as
$$\begin{pmatrix}
0&0&\I_2&0\\
\I_2&0&0&0\\
0&\I_2&0&0\\
0 & 0 & 0& B
\end{pmatrix},\quad  B=\begin{pmatrix}
b_1&b_2\\
b_3&b_4
\end{pmatrix}.$$
Equating the coefficient of the term $t^2$ of the characteristic polynomials of this matrix and of $xy^{2}$,
we get the contradiction $0=1$.
In case (2), $xy^{2}$ induces the permutation $(V_1,V_2, V_4,V_3)$. Reasoning as above we get
 the contradiction $0=\tr (xy^{2})=a$. 
\end{proof}

The characteristic polynomial of $[x,y]^3$ is $(t+1)^8$ and the eigenspace
$$\Vb_{-1}\left([x,y]^3\right)=\langle (\alpha_1, \alpha_2,\alpha_3,\alpha_4,\alpha_5,-\alpha_4-\alpha_5,
a\alpha_5+\alpha_3,\alpha_6)^T: \alpha_i \in \F_q\rangle $$
has dimension $6$.
It follows that $[x,y]^3$ is a bireflection and $[x,y]^{3p}=-\I_8$.

\begin{lemma}\label{SL3odd}
Suppose $p$ odd. Let $a\in \F_q^*$ be such that
$$\mathrm{(a)}\quad \F_p[a^4-a^2]=\F_q ;\qquad \mathrm{(b)}\quad 3a^2+1\neq 0.$$
Then $H$ contains a subgroup having $\SL_3(q)$ as epimorphic image.
\end{lemma}

\begin{proof}
By the above computations, $\tau=[x,y]^6$ is a bireflection.
Taking $v=e_1+e_2-e_{-4}$, the subgroup $G=\left\langle \tau, \tau^y , \tau^{y^2}\right\rangle$ fixes the subspace
$U=\left\langle v, y^2 v, y v\right\rangle$, acting as

\begin{footnotesize}
$$\left\langle\begin{pmatrix}
1& 2(a^2-a) & -2(a^2+a)\\
0&1&0\\
0&0&1
\end{pmatrix}, \ \begin{pmatrix}
1&0&0\\
-2(a^2+a) & 1 & 2(a^2-a) \\
0&0&1
\end{pmatrix}, \ \begin{pmatrix}
1 &  0 & 0\\
0 & 1 & 0\\
2(a^2-a) & -2(a^2+a) & 1
\end{pmatrix}\right\rangle.$$
\end{footnotesize}

Suppose $q\neq 9$. If $a\neq \pm 1$, by Dickson's Lemma  (see \cite[Theorem 8.4]{G}) and assumptions (a), the group 
$\left\langle \tau, \tau^{y}\right\rangle$ induces $\SL_2(q)$ on $\langle v, y^2 v\rangle$.
Similarly, if $a=\pm 1$ (hence $q=p$),  the group  $G_{\pm 1}= \left\langle [\tau,\tau^{y^{\pm 1}}], \tau^{y^{\mp 1}}\right\rangle $
induces $\SL_2(q)$ on the subspace $\langle  y^2v, yv \rangle$.
By assumption (b), the group $G_{|U}$ is an  irreducible subgroup of
$\SL_3(q)$. Lemma \ref{McLaughlin} gives  $G_{|U}=\SL_3(q)$. 
If $q=9$, direct calculations lead to the same conclusion.
\end{proof}

\begin{theorem}\label{main4}
If $p=2<q$, suppose that $a\in \F_q^*$ is such that
$\F_q=\F_2[a]$;
if $p$ is odd, suppose that $a\in \F_q^*$ satisfies 
$$\F_q=\F_p[a^4-a^2]\equad (a^2+3)(a^4-a^2+4)(3a^2+1)\neq 0.$$
Then $H=\Sp_8(q)$. In particular $\Sp_8(q)$ is $(2,3)$-generated for all $q> 2$.
\end{theorem}  

\begin{proof} 
First of all, there exists $a\in \F_q^*$ satisfying our hypotheses.
For $p=2$ it suffices to take as $a$ any primitive element of $\F_q^*$. 
Next, suppose $p$ odd. There are at most $2+4+2=8$ elements of $\F_q^*$ to be excluded
in order to satisfy the required conditions.
If $q=p$, it is easy to check that $a=1$ satisfies all requirements.
If $q=p^f$ with $f>1$, call $\N(q)$ the number of elements $b\in \F_q^*$ 
such that $\F_p[b^4-b^2]\neq \F_q$. Then, our claim is true whenever $p^f-1-\N(q) > 0$,
as we can drop the other conditions on $a$.
By Lemma \ref{campoN}, we have $\N(q)\leq 4 p\frac{p^{\left\lfloor f/2\right \rfloor}-1}{p-1}$ and 
so it suffices to check when $p^f- 4p\frac{p^{\left\lfloor f/2\right \rfloor}-1}{p-1}>1$.
This holds unless $p=3$ and $f=2$.
If $q=9$, take  $a\in\F_9$ whose minimal polynomial over $\F_3$ is $t^2-t-1$.

Now, by Lemmas \ref{M=H} and \ref{tensor primitive} the group $H$ is absolutely irreducible and  primitive.
Suppose that $H$ is contained in a maximal subgroup $M$ of $\Sp_8(q)$, whose classification is
described in \cite[Tables 8.48 and 8.49]{Ho}.
The classes $\mathcal{C}_4,\mathcal{C}_6,\mathcal{C}_7$ appear in the list of the
maximal subgroups of $\Sp_8(q)$ only when $q$ is odd.
However, in this case, the order of $H$ is divisible by the order of $\SL_3(q)$ and so we can easily exclude
these possibilities by order reasons.
In particular, $H$ is  tensor indecomposable. 
Since $H$  contains the bireflection $[x,y]^3$ we can apply  \cite[Theorem 7.1]{GS}, which allows to reduce our analysis
to the classes $\mathcal{C}_5$ and $\mathcal{C}_8$.

So, suppose that $M\in \mathcal{C}_5$. Then, for every $h \in H$  we have $h^2\in \Sp_8(\F_{q_0})$ for some subfield
$\F_{q_0}$ of $\F_{q}$.
In particular, we get
$a^2=\tr ((xy)^2) \in \F_{q_0}$. We conclude $q_0=q$ 
by the assumption $\F_q\leq \F_p[a^2]$.
Finally, suppose that  $M\in \mathcal{C}_8$. Then $q$ is even and  $H$ should preserve a quadratic form $Q$, whose corresponding bilinear
form $f_Q(v,u)=Q(v+u)+Q(v)+Q(u)$ has Gram matrix $J$ with respect to the canonical basis. It follows 
$$Q\left(\sum_{i\in I} z_i e_i \right)=\sum_{i\in I} Q(e_i)z_i^2+ \sum_{i=1}^4 z_i z_{-i},\quad 
\textrm{ where } I=\{\pm 1, \pm 2 , \pm 3, \pm 4\}.$$
Imposing that $Q$ is preserved by $y$ we get $Q(e_{-1})=Q(ye_{-1})=Q(e_1)+Q(e_{-1})+1$, whence
$Q(e_1)=1$. Also,  $Q(e_2)=Q(ye_2)=Q(e_3)$.
Next, imposing that $Q$ is preserved by $x$, we get $Q(e_1)=Q(xe_1)=Q(e_2)$ and 
$Q(e_4)=Q(xe_4)=a^2Q(e_3)+Q(e_4)$, whence  $Q(e_3)=0$.
This gives the  contradiction $1=Q(e_1)=Q(e_3)=0$. We conclude that $H=\Sp_8(q)$.
\end{proof}

\section{Case $n=5$}\label{case5}

By the result of  Vasilyev and  Vsemirnov \cite{Sp10} we may assume $q>2$.
Define $x,y$  as in Figure \ref{gen5}: their respective orders are $2$ and $3$.
Since $x^T J x  = y ^T J y = J$, it follows that $H=\langle x,y\rangle$ is contained in $\Sp_{10}(q)$.

\begin{figure}[ht]
\begin{footnotesize}
$$x=
\begin{pmatrix}
0 & 0 & 1 & 0 & 0 &  0 & 0 & 0 & 0 & 0\\
0 & 1 & 0 & 0 & 0 &  0 & 0 & 0 & 0 & 0\\
1 & 0 & 0 & 0 & 0 &  0 & 0 & 0 & 0 & 0\\
0 & 0 & 0 & -1 & a &  0 & 0 & 0 & 0 & 0\\
0 & 0 & 0 & 0 & 1 &  0 & 0 & 0 & 0 & 0\\
0 & 0 & 0 & 0 & 0 &  0 & 0 & 1 & 0 & 0\\
0 & 0 & 0 & 0 & 0 &  0 & 1 & 0 & 0 & 0\\
0 & 0 & 0 & 0 & 0 &  1 & 0 & 0 & 0 & 0\\
0 & 0 & 0 & 0 & 0 &  0 & 0 & 0 & -1 & 0\\
0 & 0 & 0 & 0 & 0 &  0 & 0 & 0 & a & 1
\end{pmatrix},\quad
y=
\begin{pmatrix}
 0 & 0 & 0 & 0 &  0 & -1 & 0 & 0 & 0 & 0\\
 0 & 0 & 0 & 1 &  0 &  0 & 0 & 0 & 0 & 0\\
 0 & 1 & 0 & 0 &  0 &  0 & 0 & 0 & 0 & 0\\
 0 & 0 & 1 & 0 &  0 &  0 & 0 & 0 & 0 & 0\\
 0 & 0 & 0 & 0 &  0 &  0 & 0 & 0 & 0 & -1\\
 1 & 0 & 0 & 0 &  0 & -1 & 0 & 0 & 0 & 0\\
 0 & 0 & 0 & 0 &  0 &  0 & 0 & 0 & 1 & 0\\
 0 & 0 & 0 & 0 &  0 &  0 & 1 & 0 & 0 & 0\\
 0 & 0 & 0 & 0 &  0 &  0 & 0 & 1 & 0 & 0\\
 0 & 0 & 0 & 0 &  1 &  0 & 0 & 0 & 0 & -1
 \end{pmatrix}.$$
\end{footnotesize}
\caption{Generators for $n=5$ and $q>2$.}\label{gen5}
\end{figure}

\begin{lemma}\label{irr5}
Let $a\in \F_q^\ast$ be such that $a^2+3\neq 0$. Then $H$ is absolutely irreducible.
\end{lemma}

\begin{proof} 
Let $U$ be a proper $H$-invariant subspace of $\V=\F^{10}$.
The eigenspace $\V_{-1}([x,y])$ is  generated by 
$w_1$ and  $w_2$, where
$$w_1=e_1+e_3-e_{-2} \equad w_2=e_{4}.$$
Suppose $w_2 \in U$. Since the matrix whose columns are the images of $w_2$ under
$$\I_{10},\; y,\; y^2,\; xy^2, \;(yx)^4,\; x(yx)^4,\; y(yx)^4, \;(yx)^5, \; [y^2,x],\;x[y^2,x]$$
has determinant $a^2$, we conclude that $U=\V$.
It follows that $\V_{-1}([x,y])$ is not contained in any proper $H$-invariant subspace of $\V$.
By Lemma \ref{HT}(2), we have $\V_{-1}([x,y])\cap U=\langle u \rangle$,
where  $u=w_1+\beta w_2$ for a suitable $\beta \in \F$.
By the same lemma, we may assume $\dim(U)\leq 5$.
Since $a^2+3\neq 0$, the matrix $M$ whose columns are the images of $u$ under $\I_{10}, y,  y^2, xy^2,$ $yxy^2,(xy^2)^2$ 
has rank $6$.
In fact,  the submatrix of $M$ corresponding to the rows $1, 2, 5, 7, 8, 10$
has determinant  $(a\beta)^2\neq 0$, provided $\beta\neq 0$.  
If $\beta=0$, the submatrix of $M$ corresponding to the rows $3, 4, 6, 8, 9, 10$ has determinant $a(a^2 + 3)\neq 0$.  
So $M$ has rank $6$, a  contradiction. 
\end{proof}

The element $\tau=[x,y]^{6}$ has characteristic polynomial $(t-1)^{10}$ and  fixed point space $\langle e_1, e_3, e_4, e_5, e_{-2}, e_{-5}, e_2- e_{-1}, e_2-e_{-3} \rangle$,
since $a\neq 0$. So $\tau$ is a bireflection of order $p$.

\begin{lemma}\label{SL3-5}
Assume $q\neq 2,4$. Let $U=\langle e_2,e_3,e_4\rangle$ and  $G=\langle \tau,\tau^y, \tau^{y^2}\rangle$.
If  $\F_q=\F_p[a^6]$, then $G_{|U}=\SL_3(q)$.
\end{lemma}

\begin{proof}
The group $G$ fixes  $U=\langle -a^2 e_2, e_3,e_4\rangle$ acting 
as 
$$\left\langle \begin{pmatrix}
1 & 0 & 0\\
0 & 1 & 0 \\
a^4 & 0 & 1
          \end{pmatrix},
     \begin{pmatrix}
1 & 0 & 0\\
0 & 1 & -a^2 \\
0 & 0 & 1
 \end{pmatrix},
\begin{pmatrix}
1 & 1 & 0\\
0 & 1 & 0 \\
0 & 0 & 1
          \end{pmatrix}               
\right\rangle.$$
Then, $\eta=[\tau,\tau^{y}]$ acts on $U$
as $\I_3+a^6E_{2,1}$.
Also, $G_{|U}$  is an irreducible subgroup of $\SL_3(q)$.
Suppose $q\neq 9$ is odd. By Dickson's Lemma and the assumption $\F_q=\F_p[a^6]$, the group
$\langle \eta, \tau^{y^2}\rangle $ induces $\SL_2(q)$ on $\langle e_2,e_3\rangle$: 
by Lemma \ref{McLaughlin} 
we obtain $G_{|U}=\SL_3(q)$. The same holds also for $q=9$ by direct computations.
Suppose now that $q>4$ is even.
The group $G_{|U}$ contains two transvections, $\tau$ and $\eta$, having the same 
axis but distinct centers: by \cite{Pip} we obtain $G_{|U}=\SL_3(q_0)$ for some $q_0\leq q$.
Since $\tr((\eta\tau^{y^2})_{|U})=a^6+1$, we get $G_{|U}=\SL_3(q)$.
\end{proof}

\begin{lemma}\label{ex5}
Suppose $q\not \in \{2,4,25\}$. Then there exists $a\in \F_q^*$  such that 
$$a^2+3\neq 0 \equad \F_p[a^6]=\F_q.$$
\end{lemma}

\begin{proof}
If $q=p\neq 2$, then $a=1$ satisfies all the requirements.
If $q=p^f$ with $f>1$, call $\N(q)$ the number of elements $b\in \F_q^*$ 
such that $\F_p[b^6]\neq \F_q$. Then, our claim is true whenever $p^f-1-\N(q) > 0$,
as we can drop the other condition.
By Lemma \ref{campoN}, we have $\N(q)\leq 6p\frac{p^{\left\lfloor f/2\right \rfloor}-1}{p-1}$ and 
so it suffices to check when $p^f- 6p\frac{p^{\left\lfloor f/2\right \rfloor}-1}{p-1} > 1$.
This holds unless $q \in \{4,8,16,32,64, 9,25 \}$. For $q\neq 4,25$ one can take $a\in \F_q^*$ whose minimal polynomial $m(t)$
over $\F_p$ is as follows:
$$\begin{array}{|c|c|c|c|c|c|}\hline
q & 8 & 16 & 32 & 64 & 9 \\\hline
m(t) & t^3+t+1 &  t^4+t^3+1&  t^5+t^2+1 &  t^6+t+1 & t^2+t-1 \\
\hline   \end{array}$$
\end{proof}

\begin{theorem}\label{main5}
If $a\in \F_q^*$ is such that $a^2+3\neq 0$ and $\F_p[a^6]=\F_q$,
then $H=\Sp_{10}(q)$. In particular $\Sp_{10}(q)$ is $(2,3)$-generated for all $q> 2$.
\end{theorem}  

\begin{proof} 
Suppose there exists $a\in \F_q^*$ which satisfies all the requirements (by Lemma \ref{ex5} this is true if $q\not \in \{2,4,25\}$).
For any such value of $a$, the subgroup $H$ is absolutely irreducible and its order is divisible by
the order of $\SL_3(q)$, see Lemmas \ref{irr5} and \ref{SL3-5}. If our claim is false, there exists  a maximal subgroup $M$ of 
$\Sp_{10}(q)$ which contains $H$.
Considering the classification of the maximal subgroups of $\Sp_{10}(q)$ (see \cite[Tables 8.64 and 8.65]{Ho}) 
we obtain that $M$ belongs to one of the classes $\Cl_2, \Cl_4, \Cl_5, \Cl_8, \ClS$. 
By order reasons, we can exclude  
$M\cong \Sp_2(q)^5:\Sym(5) \in \Cl_2$ and $M\cong \Sp_2(q)\circ \Or_5(q) \in \Cl_4$.

Suppose that $M\in \mathcal{C}_5$. Then, for every $h \in H$  we have $h^2\in \Sp_{10}(\F_{q_0})$ for some subfield
$\F_{q_0}$ of $\F_{q}$. In particular, if $p=2$ we get
$a^4 + 1 =\tr ((xy)^{10}) \in \F_{q_0}$; if $p$ is odd, we get
$-8a^2 - 5=\tr ((xy)^{8}) \in \F_{q_0}$. 
In both cases, we conclude $q_0=q$, by the assumption $\F_q=~\F_p[a^6]$.

Next, suppose that $M$ is imprimitive. By the above considerations on the class $\Cl_2$, 
$H$ preserves a decomposition $\F^{10}=V_1\oplus V_2$ with $\dim V_1=\dim V_2=5$.
Clearly, we must have  $yV_1=V_1$, $yV_2=V_2$ and $xV_1=V_2$. Hence, the product $xy$ should have trace equal
to zero, but $\tr(xy)=-1$.
We conclude that $M\not \in \Cl_2$. In particular, $H$ is primitive and tensor indecomposable.
Since it contains the bireflection $\tau$, by \cite[Theorem 7.1]{GS} it can only be a group of the items
(a) and (d) of the list given there. 
This means that either $H\leq \SO_{10}^\pm(q)$ and $q$ is even, or $H$ normalizes $\SU_5(4)$ and $q=p$ is odd.
In the latest case, $\pi(M)=\{2, 3, 5, 11 \}$.
Since $\tau$ has order $p$,  we easily obtain a contradiction unless  $p \in \{3,5,11\}$.
However, for these values of $p$ there exists a prime $s\geq 13$ which divides
$|\SL_3(p)|$, a contradiction.

Finally, suppose that $q$ is even and $H$ preserves a quadratic form $Q$, whose corresponding bilinear
form  has Gram matrix $J$ with respect to the canonical basis. 
Imposing that $Q$ is preserved by $y$ we get $Q(e_1)=1$ and $Q(e_3)=Q(e_4)$.
Next, imposing that $Q$ is preserved by $x$, we get $Q(e_1)=Q(e_3)$ and $Q(e_4)=0$,
whence the contradiction $1=Q(e_1)=Q(e_4)=0$. We conclude that $H=\Sp_{10}(q)$.

We are left to consider the exceptional cases $q\in \{4,25\}$.
For these values of $q$, take $a\in \F_q$ whose minimal polynomial over $\F_p$ is $t^2+t+1$.
Define $L_4=\{ 4, 7, 10, 12, 21 \}$ and $L_{25}=\{  1,6,  11,    12,13,  14 \}$.
Then 
$$\bigcup_{k \in L_q} \pi( (xy)^k y )=\pi(\Sp_{10}(q)).$$
By Lemma \ref{LPS} we conclude that  $H=\Sp_{10}(q)$.
\end{proof}

\section{Action of the commutator, $n= 10$ or  $n\geq 12$}\label{azione}
 
In this section we assume  $n= 10$ or  $n\geq 12$.
Setting $T(\pm i)=\langle e_{\pm i}, e_{\pm (i+4)},$
$e_{\pm (i+5)}\rangle$,
we can decompose  $\Vb=\F_q^{2n}$ into the  direct sum of the following
$[x,y]$-invariant subspaces: 
$$\begin{array}{rcl}
\mathcal{A}_0& =& \left\{\begin{array}{ll}
\langle e_{2}, e_{3}, e_{4},e_{6}, e_{7}, e_{-1}\rangle \oplus 
\langle e_{1}, e_{-2}, e_{-3}, e_{-4},e_{-6}, e_{-7}\rangle ,& \textrm{if } r=0,\ p>2,\\
\langle e_1,e_{-1}\rangle \oplus \langle e_3,e_4,e_{-3},e_{-4}\rangle
\oplus \langle e_2,e_6,e_7,e_{-2},e_{-6},e_{-7}\rangle , & \textrm{if } r=0,\  p=2,
\end{array}\right.\\[-8pt]\\
\mathcal{A}_1 & =& \langle   e_{1}, e_{2}, e_{4},e_{5},  e_{-1}, e_{-2}, e_{-4},e_{-5}\rangle, \ \textrm{if } r=1,\\[-8pt]\\
\mathcal{A}_2 & =&  \langle e_1,e_2,e_3,e_4,e_5,e_6,e_8,e_9, e_{-1},e_{-2},e_{-3},e_{-4},e_{-5},e_{-6},e_{-8},e_{-9}\rangle , 
\ \textrm{if } r=2,\\[-8pt]\\
\mathcal{B}& =& \left\{
\begin{array}{ll}
\bigoplus\limits_{j=0}^{m-5} \left(T(5+3j) \oplus T(-(5+3j))\right) & \textrm{if } r=0,\\[5pt]
\bigoplus\limits_{j=0}^{m-4} \left(T(3+3j) \oplus T(-(3+3j))\right) & \textrm{if } r=1,\\[5pt]
\bigoplus\limits_{j=0}^{m-5} \left(T(7+3j) \oplus T(-(7+3j))\right) & \textrm{if } r=2,
\end{array}
\right.\\[-8pt]\\
\mathcal{C}^\pm  &=& \langle e_{\pm(n-7)},e_{\pm(n-4)},e_{\pm(n-3)},e_{\pm(n-2)},e_{\pm (n-1)},e_{\pm n}\rangle.
\end{array}$$

\begin{lemma} 
If $n=14$ and $p$ is odd, the order of $[x,y]_{|{\mathcal A}_2}$ is $16$; in the other cases, 
we have $\left([x,y]_{|{\mathcal A}_r}\right)^{24}=\I$.
Moreover, $\left([x,y]_{|\mathcal B}\right)^6=\id$.   
\end{lemma}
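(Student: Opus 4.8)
The plan is to compute the commutator $[x,y]=xy^{-1}xy$ directly on each invariant subspace, exploiting the fact that the decomposition $\tilde V = \mathcal{A}_r \oplus \mathcal{B} \oplus \mathcal{C}^+ \oplus \mathcal{C}^-$ is $[x,y]$-invariant, so the order of each restriction can be determined in isolation. Since the claimed bounds are $\left([x,y]_{|\mathcal A_r}\right)^{24}=\I$ (with the single exception of order $16$ when $n=14$, $p>2$) and $\left([x,y]_{|\mathcal B}\right)^6=\I$, I would first recall the explicit action of $x=x_1x_2$ and $y=y_1y_2$ on the relevant basis vectors from the definitions in Section~\ref{sec2}. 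The key simplification is that $x_1,y_1$ act nontrivially only on the first $r$ coordinates (those attached to the remainder block), while $x_2,y_2$ act on the ``row'' structure; on the subspaces $\mathcal{B}$ and $\mathcal{A}_r$ the contributions of the two factors decouple cleanly.

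First I would handle $\mathcal{B}$, which is the uniform bulk of the space. Each summand $T(\pm i)\oplus T(\mp i)$ sits entirely in the ``interior'' rows where $y_1=x_1=\I$, so the action of $[x,y]$ there is governed purely by $x_2$ and $y_2$, and these act by the permutation/cyclic patterns described in items $(x_2)$ and $(y_2)$ together with the fixed matrices $\eta_i$. Because $y_2$ cycles each row as a $3$-cycle and $x_2$ shifts between consecutive rows, the restriction of $[x,y]$ to a generic $T$-block should be a permutation-type matrix whose cube is the identity; concatenating across the summands preserves the bound, giving $\left([x,y]_{|\mathcal B}\right)^6=\I$ (the factor $6$ rather than $3$ accounting for possible sign twists from the $A^{-T}$ action on the dual vectors $e_{-i}$). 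The main point to verify is that no $\mathcal{A}_r$-boundary effects leak into $\mathcal{B}$, which follows from the index ranges in the definition of $\mathcal{B}$.

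For the blocks $\mathcal{A}_r$, the computation is genuinely case-dependent on $r\in\{0,1,2\}$ and on $p=2$ versus $p>2$, since these are exactly the spots where $x_1,y_1$ and the matrix $\gamma$ or $\eta_i$ intervene. Here I would write down the $6\times 6$, $8\times 8$, or $16\times 16$ matrix of $[x,y]_{|\mathcal A_r}$ explicitly in the given basis and compute its multiplicative order, either by finding its rational canonical form or by locating its eigenvalues among roots of unity. The bound $24=2^3\cdot 3$ suggests the restriction decomposes (up to conjugacy) into pieces of orders dividing $8$ and dividing $3$; the exceptional value $16$ for $n=14$, $p>2$ indicates that in that one configuration a $2$-power eigenvalue of order $16$ appears, presumably from a specific interaction of $\gamma$ with the odd-characteristic $\eta_1$.

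The hard part will be the $\mathcal{A}_r$ computation, and in particular pinning down the exact order rather than merely a bound: distinguishing order $16$ from $24$ in the $n=14$, $p>2$ case requires an exact eigenvalue or characteristic-polynomial calculation, not just an estimate, and one must be careful that the $p=2$ reductions (where $\eta_2,\eta_3$ replace $\eta_1$ and where $\gamma$ is applied in transposed order for the small exceptional $(n,p)$ pairs) do not silently change the order. I would therefore organize the proof as a finite checklist of explicit matrices, computing the order of each by hand or by reducing to a companion matrix, and then taking the least common multiple across the blocks to obtain the stated $24$ (respectively $6$ on $\mathcal{B}$). Since everything is a finite linear-algebra verification over $\F_q$ with the action already fully specified, no conceptual obstacle remains beyond bookkeeping accuracy.
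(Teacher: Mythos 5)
Your plan is essentially identical to the paper's proof: the authors simply write out the matrices of $[x,y]$ restricted to the summands of $\mathcal{A}_r$ (with a parameter $\varepsilon=\pm 1$ separating the smallest $n$ in each congruence class from the larger ones) and read off their orders — $9-3\varepsilon$, $\{3,4,6\}$, $6-2\varepsilon$, $12-4\varepsilon$, all dividing $24$ except the value $12-4\varepsilon=16$ when $r=2$, $n=14$, $p>2$ — while on $\mathcal{B}$ they observe, exactly as you do, that $[x,y]$ fixes each $T(\pm i)$ and acts there as a monomial $3$-cycle with entries $\pm 1$, so its sixth power is the identity. One caution on your speculative asides (harmless, since your method is to compute the matrices outright): the $-1$ entries, and the exceptional order $16$, come neither from $\gamma$, nor from $\eta_1$, nor from the $A^{-T}$ action on the dual vectors, but from intermediate images passing through $e_{\pm(n-4)}$, on which $x_2$ acts as $-1$ — for $r=2$ this contaminates $\mathcal{A}_2$ precisely when $n=14$ (and the boundary $T$-blocks of $\mathcal{B}$ in general), which is why $n=14$, $p>2$ is the lone exception and why even ``interior-looking'' blocks of $\mathcal{B}$ can have cube equal to minus the identity rather than the identity.
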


\begin{proof}
For the action of $[x,y]$ on $\mathcal{A}_r$ we have the following cases:
\begin{itemize}
\item  $r=0$, $p$ odd. 
Let $\varepsilon=-1$ if $n=12$ and  $\varepsilon=+1$ if $n>12$.
The action on the two summands of $\mathcal{A}_0$ is represented by the respective following matrices of order $9-3\varepsilon$:

\begin{footnotesize}
$$\begin{pmatrix}
 0 & -1 & 0 &  0 & 0 & 0 \\ 
 0 & 0 & 1 &  0 & 0 & 0 \\
 0 & 0 & 0 &  0 & 0 & 1 \\
 0 & 0 & 0 &  0 & \varepsilon & 0 \\
 1 & 0 & 0 &  0 & 0 & 0 \\ 
 0 & 0 & 0 &  -1 & 0 & 0
\end{pmatrix},
\quad \begin{pmatrix}
 0 & 0 & 0 &  0 & 1 & 0\\
 0 & 0 & -1 &  0 & 0 & 0\\
 0 & 0 & 0 &  1 & 0 & 0\\
 -1 & 0 & 0 &  0 & 0 & 0\\
 0 & 0 & 0 &  0 & 0 & \varepsilon\\
 0 & 1 & 0 &  0 & 0 & 0
\end{pmatrix}.$$
\end{footnotesize}

\item  $r=0$, $p=2$.
The action on the  three summands of $\mathcal{A}_0$ is represented by 
the following matrices of order, respectively, $3$, $4$ and $6$:

\begin{footnotesize}
$$\begin{pmatrix}
0  & 1\\
1  & 1
\end{pmatrix},\quad \begin{pmatrix}
    0  &     1 &      0 &      0\\
    1   &    0 &      0 &      0\\
    0   &    0 &      0 &      1\\
    1   &    0 &      1 &      0
\end{pmatrix},\quad \begin{pmatrix}
    0 &      0  &     0   &    0 &      1 &      0\\
    0 &      0  &     1  &     0 &      0 &      0\\
    1 &      0  &     0   &    0 &      0 &      0\\
    0  &     1  &     0   &    0 &      0 &      0\\
    0  &     0    &   0  &     0 &      0 &      1\\
    0   &    0   &    0 &      1 &      0 &      0
\end{pmatrix}.$$
\end{footnotesize}

\item  $r=1$. 
Let $\varepsilon=-1$ if $n=10$ and $p$ is odd;  $\varepsilon=+1$ otherwise.
The action on $\mathcal{A}_1$ is represented by the following square matrix
of size $8$ and order $6-2\varepsilon$: $E_{1,2}+E_{2,7}+E_{5,6}+E_{8,1}-(E_{2,3}+E_{4,5}+ E_{6,3}+E_{8,5})+
\varepsilon (E_{3,4}+E_{7,8})$.

\item $r=2$.
Let $\varepsilon=-1$ if $n=14$ and $p$ is odd;  $\varepsilon=+1$ otherwise.
The action on $\mathcal{A}_2$ is represented by the following square matrix of  size $16$ and
order $12-4\varepsilon$:
$E_{1,7}+E_{3,5}+E_{4,11}+E_{5,6}+E_{6,2}+E_{8,4}+E_{9,15}+E_{10,1}+E_{11,13}+E_{13,14}+E_{14,10}+E_{16,12}
-(E_{2,9}+E_{4,3}+E_{10,9}+E_{12,3})+\varepsilon (E_{7,8}+E_{15,16}) $. 
\end{itemize}

As to the action on $\mathcal B$ we note that $[x,y]$ fixes each of the spaces $T(\pm i)$, inducing a  monomial 3-cycle, with entries 
$\pm 1$.
\end{proof}

Setting $\vartheta =\vartheta_1$ if $p$ is odd, 
$\vartheta=\vartheta_2$ if $p=2 < q$, and 
$\vartheta=\vartheta_3$ if $q=2$ with
\begin{footnotesize}
$$\vartheta_1=
\begin{pmatrix}
  0 &    0 &    1 &    0 &    0 &    0\\
  0 &    0 &    0 &    0 &    0 &   -1\\
  0 &    0 &    0 &   -1 &    0 &    0\\
  1 &    0 &    0 &    0 &    0  &   0\\
  0 &    0 &    0 &   -a &   -1  &   0\\
  0  &   1 &    0 &  a^2 &    a  &   0 
\end{pmatrix},\quad
\vartheta_2=
\begin{pmatrix}
    0  &   0 &    1 &    0  &   0   &  0\\
    0  &   0 &    0 &    1  &   1   &  1\\
    0  &   0 &    0 &    1  &   0   &  0\\
    1 &    0 &    0 &    0  &   0   &  0\\
    0 &    a &    0 &  a^2  &  a + 1 &    0\\
    0 &    1 &    0 & a + 1 &    1   &  0
        \end{pmatrix},
        $$
$$\vartheta_3=
\begin{pmatrix}      
0 & 0 & 1 & 0 & 0 & 0\\
0 & 0 & 0 & 1 & 1 & 0\\
0 & 0 & 0 & 0 & 1 & 1\\
1 & 0 & 0 & 0 & 0 & 0\\
0 & 0 & 0 & 1 & 0 & 0\\
0 & 1 & 0 & 1 & 1 & 1
\end{pmatrix},$$
\end{footnotesize}

\noindent the element $[x,y]$ acts on $\mathcal{C}^+\oplus \mathcal{C}^-$ as
$\diag\left(\vartheta,\vartheta^{-T}\right)$. 
The characteristic polynomial of $\vartheta$ is
$$\chi_\vartheta(t)=\left\{\begin{array}{ll} (t + 1)^2(t^2 - t + 1) (t^2+1) & \textrm{ if } p \textrm{ is odd},\\
   (t + 1)^2(t^2 + t + 1)(t^2 + at + 1)  & \textrm{ if } p=2<q,\\
   (t + 1)^2(t^4 + t^3 + t^2 + t + 1)  & \textrm{ if } q=2.\\
                           \end{array}\right. 
$$
Define
\begin{equation}\label{deftau}
\tau=\left\{\begin{array}{ll}
{} [x,y]^{24(1-p)} & \textrm{ if } n=14 \textrm{ and } p>2,\\
{} [x,y]^{24} & \textrm{ otherwise.}
             \end{array}\right.
\end{equation}
From the previous analysis of $[x,y]$, we see that  $\tau$ is an element of $\wSL_n(q)$ that fixes $S_9$ and
such that $\tau e_i=e_i$ for all $1\leq i\leq n-9$. As $yS_9=S_9$, we have
\begin{equation}\label{S9}
G=\left\langle \tau , \tau^y, \tau^{y^2}\right\rangle \leq \wSL_9(q).
\end{equation}

\section{The general case}\label{sec4}

In this section we need $n$ to be sufficiently large.
So, if $q=2$ we will assume $n\geq 6$; 
if $q>2$ we will assume $n=10$ or $n\geq 12$.
In both cases we prove the $(2,3)$-generation of~$\Sp_{2n}(q)$.

\subsection{Subcase $q=2$ and $n \geq 6$}

Recall that, given a finite group $B$, we denote by $\pi(B)$ the set of the prime divisors of $|B|$.
Furthermore, if $b \in B$,  we write $\pi(b)$ for $\pi(\langle b \rangle)$.

\begin{proposition}\label{q=2}
For all $n \geq 6$ we have $H=\Sp_{2n}(2)$.
\end{proposition}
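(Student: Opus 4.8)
The plan is to deduce the result from part (2) of Lemma~\ref{LPS}, applied with $M=H$ and $G=\Sp_{2n}(2)$. Since $q=2$ the centre is trivial, so $G=\Sp_{2n}(2)=\PSp_{2n}(2)$, and we already know $H\le G$ with $n\ge 6\ge 4$. Hence everything reduces to two claims: first, that $\varpi(H)=\varpi(\Sp_{2n}(2))$; and second, when $n$ is even, that the exceptional alternative $\Omega_{2n}^-(2)\unlhd H$ permitted by Lemma~\ref{LPS} cannot occur. Granting both, Lemma~\ref{LPS}(2) forces $H=\Sp_{2n}(2)$. I would prefer this route over trying to produce a copy of $\wSL_\ell(2)$ and invoking Proposition~\ref{wSLell}: building the seed $\wSL_\ell(2)$ is not uniform in $n$ for $q=2$, and one cannot cleanly pass through $\SL_n(2)$ via Lemma~\ref{LPS}(1) because the pair $(6,2)$ is excluded there (and $\SL_n(2)$ in any case misses the primes coming from $2^n+1$).

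For the prime condition I would argue directly with the explicit $\F_2$-generators, where $a=1$, the block $\gamma$ equals $\left(\begin{smallmatrix}1&0\\1&1\end{smallmatrix}\right)$ and $y_2$ uses $\eta_3$; the whole description of $x=x_1x_2$ and $y=y_1y_2$ is uniform in $n$ through the periodic ``row'' structure. Since $x$ is an involution, $2\in\varpi(H)$. It remains to capture the odd primes, and $\varpi(\Sp_{2n}(2))$ is generated by the primitive prime divisors of $2^{2i}-1$ for $1\le i\le n$. The plan is to exhibit, for each such $i$, an element of $H$ whose order is divisible by one of these primes, reading off cyclic actions of suitable short words (and powers of $[x,y]$) on small $[x,y]$-invariant subspaces. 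The two decisive elements are one of order $2^n-1$ (a regular semisimple element whose characteristic polynomial is a primitive degree-$n$ polynomial times its reciprocal) and one of order $2^n+1$ (an anisotropic, Coxeter-type torus element); together with the smaller blocks these should sweep out every primitive prime divisor, giving $\varpi(H)=\varpi(\Sp_{2n}(2))$.

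The element of order $2^n-1$ also settles the even-$n$ case. Indeed, if $n$ is even and Lemma~\ref{LPS}(2) returned $\Omega_{2n}^-(2)\unlhd H$, then $H$ would normalise $\Omega_{2n}^-(2)$, so $H\le N_{\Sp_{2n}(2)}(\Omega_{2n}^-(2))=\Or^-_{2n}(2)$. But $\Or^-_{2n}(2)$ contains no element of order $2^n-1$: a cyclic subgroup of that order would have to be a Singer cycle of a $\GL$-type torus filling the whole $2n$-dimensional space, which is of plus type and lives only in $\Or^+_{2n}(2)$. Since $H$ does contain such an element, this alternative is impossible, and we conclude $H=\Sp_{2n}(2)$. (Note that for odd $n$ the $\Omega^-$-branch is already excluded by the hypotheses of Lemma~\ref{LPS}(2), so the order-$(2^n-1)$ element is needed only for the prime count there.)

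The hard part will be the second paragraph: producing, uniformly in $n$ and purely over $\F_2$, an explicit family of elements of $H$ whose orders realise \emph{all} the primitive prime divisors $r_{2i}$, and in particular verifying that specific words have orders exactly $2^n-1$ and $2^n+1$. This is a genuine characteristic-polynomial computation exploiting the repetitive row structure, and getting it to hold for every $n\ge 6$ (rather than for the ranges $n=10$, $n\ge 12$ treated by the commutator analysis of Section~\ref{azione}) is the crux; by contrast, once the prime condition is in hand the orthogonal exclusion above is a short formal step.
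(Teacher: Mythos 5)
Your reduction to Lemma~\ref{LPS}(2) is the right frame (and is how the paper handles the small cases), but your ``short formal step'' excluding the orthogonal branch is false. For \emph{every} even $n$, the group $\Or^-_{2n}(2)$ --- indeed already $\Omega^-_{2n}(2)$ --- contains elements of order $2^n-1$: write the minus-type $2n$-space as a perpendicular sum of a plus-type and a minus-type $n$-space; inside $\Or^+_n(2)$ sits $\GL_{n/2}(2)$ with a Singer element of order $2^{n/2}-1$, while $\Omega^-_n(2)$ contains a cyclic (Coxeter-type) torus of order $2^{n/2}+1$, and since these two orders are coprime and odd, the product of the two commuting elements lies in $\Omega^-_{2n}(2)$ and has order $(2^{n/2}-1)(2^{n/2}+1)=2^n-1$ (e.g.\ $n=6$: an element of order $9$ times one of order $7$ gives order $63$ in $\Omega^-_{12}(2)$). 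Your implicit claim that a cyclic subgroup of order $2^n-1$ must be a Singer cycle filling the whole space is wrong: element orders are least common multiples over invariant blocks. In fact, for even $n$ one has $\varpi(\Omega^-_{2n}(2))=\varpi(\Sp_{2n}(2))$ --- which is precisely why the exceptional branch appears in Lemma~\ref{LPS}(2) --- so neither primes nor, as just shown, the specific order $2^n-1$ can separate the two groups. The paper rules out the orthogonal alternative by a different and genuinely needed argument: assuming $H$ preserves a quadratic form $Q$ whose polarization has Gram matrix $J$, it derives incompatible forced values $Q(e_i)$ from the explicit action of $x$ and $y$ (carried out for $n=6,8$, the only cases where the branch survives its prime computations).

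The first half of your plan also remains a plan rather than a proof, and the deferred part is the entire difficulty: you would need, for every $n\geq 6$, explicit words in $x,y$ whose orders sweep out all primitive prime divisors, including elements of orders $2^n\pm 1$. But the generators are locally periodic, so $[x,y]$ and short words decompose into blocks of bounded size (this is the content of Section~\ref{azione}), while the set of primes to be hit grows with $n$; any verification along your lines is therefore an unbounded family of characteristic-polynomial computations, not a finite check. The paper's architecture is designed exactly to avoid this: it verifies $\varpi$-coverage by direct computation only for the finitely many cases $n\in\{6,7,8,9,11\}$, and for $n=10$ or $n\geq 12$ it performs a single $n$-independent computation inside $\wSL_9(2)$ --- the element $\tau=[x,y]^{24}$ and its conjugates $\tau^y,\tau^{y^2},\tau^{y^2x}$ all lie in $\wSL_9(2)$, and Lemma~\ref{LPS}(1) applied in degree $9$ yields $K=\wSL_9(2)$ --- after which the structural bootstrap of Proposition~\ref{wSLell} (transvection generation via Lemma~\ref{extension} plus the standard generators) gives $H=\Sp_{2n}(2)$ uniformly. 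Note that your stated reason for rejecting this route is moot: the paper applies Lemma~\ref{LPS}(1) with the fixed degree $9$, not with degree $n$, so the excluded pair $(6,2)$ never arises.
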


\begin{proof}
Let $L_6=\{ 2,5,11,14,22\}$ and $L_8=\{1,4,5,8,9,10,11,13\}$.
Direct computations show that, for $n\in \{6,8\}$,
$$\bigcup_{k \in L_n} \pi([x,y]^k y  ) =\pi(\Sp_{2n}(2)).$$
For $n\in \{7,9,11\}$, define
$L_7=\{1,3,9,11,13,18,22 \}$,
$L_9= \{3,7,8,9,10,12,15,39,114 \}$ 
and $L_{11}=\{ 1,3,5,10,13,14,15,37,51,99\}$.
Then 
$$\bigcup_{k \in L_n} \pi( (xy)^k y ) =\pi(\Sp_{2n}(2)).$$
By Lemma \ref{LPS} we conclude that either $H=\Sp_{2n}(2)$ or 
$n=6,8$ and $\Omega_{2n}^-(2) \unlhd H$.

So, assume $n\in \{6,8\}$ and suppose that $H$ preserves a non-degenerate quadratic form $Q$,
whose corresponding bilinear form $f_Q$ has Gram matrix $J$ with respect to the canonical basis.
For $n=6$, since $Q$ is preserved by $x$,  we get $Q(e_{-1})=1$ and $Q(e_{-3})=Q(e_{-4})$.
On the other hand, since $Q$ is preserved also by $y$, we have $Q(e_{-1})=Q(e_{-3})$, $Q(e_{-4})=Q(e_{-5})$
and $Q(e_{-4})=Q(e_{-4})+Q(e_{-5})$. So, we get the contradiction $1=Q(e_{-1})=Q(e_{-5})=0$.
For $n=8$, since $Q$ is preserved by $x$,  we get $Q(e_{1})=Q(e_4)$, $Q(e_5)=Q(e_6)$ and  
$Q(e_7)=0$. Considering the action of $y$, we obtain $Q(e_1)=1$, $Q(e_4)=Q(e_5)$, $Q(e_6)=Q(e_7)$,
whence the contradiction $1=Q(e_1)=Q(e_7)=0$.
Thus, in both case, $H$ is not contained in $\SO_{2n}^\pm(2)$, proving that $H=\Sp_{2n}(2)$.

Now, assume $n= 10$ or $n\geq 12$.
Consider the group $G\leq \wSL_9(2)$ defined in \eqref{S9}.
Take the element $g=\tau(\tau^{y^2})^{-1} \tau^y$, where $\tau=[x,y]^{24}$.
Since  
$$\bigcup_{k \in L} \pi(g^k \tau^y (\tau^{y^2})^{-1} ) =\pi(\SL_9(2)),\quad  \textrm{where } L= \{ 2,4,8,11,12\},$$
by Lemma \ref{LPS} we obtain $G=\wSL_9(2)$.
This means that $G$ is a subgroup $\K_9$: the statement follows from Theorem \ref{wSLell}.
\end{proof}

\subsection{Subcase $p=2 < q$, $n=10$ or $n\geq 12$} 

If $q\neq 8$,  we take $\sigma\in \F_{q^2}^*$ of order $q+1$ so that $\sigma +\sigma^{-1}=\sigma+\sigma^q\in \F_q$.
We define
\begin{equation}\label{a sigma}
a:=\sigma+\sigma^{-1}\quad \textrm{ where } \quad
\quad \left\{\begin{array}{ll}
\sigma\in \F_{8}^* \textrm{ has order } 7 & \textrm{ if } q= 8,\\
\sigma\in \F_{q^2}^* \textrm{ has order } q+1 & \textrm{ if } q\neq 8.
             \end{array}\right.
\end{equation}
We have $a^3+a= (\sigma +\sigma^{-1})^3 + \sigma +\sigma^{-1}=\sigma^3+\sigma^{-3}$.
Now, $\sigma^3$ is a primitive element of $\F_8$ when $q=8$, and a primitive element of $\F_{q^2}$ when $q\neq 8$, 
by Lemma \ref{minimal field}.
Applying  one more time the same lemma, we obtain that $\F_2[a^3+a]=\F_q$.

The characteristic polynomial of $\vartheta$ (see Section \ref{azione}) is  
$(t+1)^2(t^2+t+1)(t^2+at+1)$, with
roots $\sigma^{\pm 1}$ of multiplicity $1$, as  $\sigma^3\neq 1$.
Since $\vartheta^2$ has a fixed point space of dimension $2$, $\vartheta^{24}$ is conjugate in $\GL_6(q^2)$ to
$\diag(\rho,\rho^{-1},\I_4)$ with $\rho=\sigma^{24}\neq 1$.
We get that $\tau=[x,y]^{24}$ fixes $V$ and $\tau_{|V}$ is conjugate
in $\GL_n(q^2)$ to $\diag(\rho,\rho^{-1},\I_{n-2})$. In particular, 
$\tau_{|V}$ has order $7$ if $q=8$, $\frac{q+1}{\gcd(q+1,3)}$ otherwise.
The eigenspaces $V_{\rho^{\pm 1}}(\tau)$ and $V_{\rho^{\pm 1}}(\tau^T)$
(with the notation \eqref{eigen}) are contained in $S_8$, being generated, respectively, by:
\begin{equation}\label{s sigma}
\left\{\begin{array}{l}
s_{\rho^{\pm 1}}  =   e_{n-4}+ (1+\sigma^{\pm 1}) e_{n-1}+e_n,\\
\overline{s}_{\rho^{\pm 1}}  =  e_{n-7}+e_{n-4}+e_{n-1} + \sigma^{\mp 1}\left(e_{n-3}+e_{n}\right)
+ \sigma^{\pm 1}e_{n-2}. 
\end{array}\right.
\end{equation}
Using \eqref{S9} and the fact that $x S_{10}=x_2 S_{10}=S_{10}$, we have
\begin{equation}\label{K2}
K:=\left \langle  \tau^{y^i}, \tau^{y^j x} \mid i,j=0,\pm 1 \right \rangle=\langle G,G^x\rangle\leq \wSL_{10}(q)
\end{equation}
Furthermore, $K$ fixes the subspace $S_7$ and satisfies condition $(\mathrm{b}_7)$ of Definition \ref{Kell}.

\begin{lemma}\label{G7}
Take $a\in \F_q^*$ as in \eqref{a sigma} and  $K$ as in \eqref{K2}. Then, the restriction of $K$ to $S_7$ is the group $\SL_7(q)$.
\end{lemma}

\begin{proof}
The group $G_2=\langle \tau, \tau^y\rangle$ fixes the subspace $U_4=\langle s_\rho, s_{\rho^{-1}}, 
y^2s_\rho, y^2 s_{\rho^{-1}}\rangle$,
acting as $\Gamma=\langle \gamma_1, \gamma_2\rangle$, where
$$\gamma_1=
\begin{pmatrix}
\rho & 0 & 0 & \alpha \sigma^{-1}\\
0  & \rho^{-1} & \alpha \rho^{-1} & 0\\
0 & 0 & 1 & 0\\
0 & 0 & 0 & 1
\end{pmatrix},\;
\gamma_2=
\begin{pmatrix}
1 & 0 & 0 & 0 \\ 
0 & 1 & 0 & 0 \\
\alpha & 0 & \rho & 0 \\
0 & \alpha(\rho\sigma)^{-1} & 0 & \rho^{-1}
\end{pmatrix}, \; \alpha=(\rho+1)(\sigma+1).$$
 
We claim that $\Gamma$ is conjugate in $\GL_4(q^2)$ to $\SL_4(q)$.
To start, let $U\leq \F^{4}$ be a subspace fixed by $\Gamma$.
Let $\{f_1,f_2,f_3,f_4\}$ be the canonical basis of $\F^4$.
If $\gamma_{1|U}\neq \id$, then $f_i\in U$ for some $i=1,2$.
If follows $\langle f_i, \gamma_2 f_i, \gamma_1\gamma_2 f_i, \gamma_2\gamma_1\gamma_2 f_i\rangle=\F^4=U$.
Otherwise, calling $\overline{U}$ a complement of $U$ fixed by $\Gamma^T$, the subspace $\overline{U}$ contains the eigenvector 
$\overline{u}=\sigma f_1+(\sigma+1) f_4$ of $\gamma_1^T$.
Since $\langle \overline{u}, \gamma_2^T \overline{u},(\gamma_2\gamma_1\gamma_2)^T \overline{u}, 
(\gamma_2\gamma_1\gamma_2^2)^T\overline{u} \rangle =\F^4$, we conclude $U=\{0\}$.
Thus, $\Gamma$ is absolutely irreducible and we consider the maximal subgroups of $\SL_4(q)$ (see \cite[Tables 8.8
and 8.9]{Ho}).

Suppose that $\Gamma$ is imprimitive. Being irreducible, it maps onto a transitive subgroup of $\Sym(4)$.
From $\langle \gamma_1 \rangle=\langle \gamma_1^2\rangle$ and $\langle \gamma_2 \rangle=\langle \gamma_2^2\rangle$,
it follows that $\Gamma$ induces $\Alt(4)$ and the order of $g^6$ divides $(q-1)^4$ for all $g \in \Gamma$.
This implies $q=8$, but, in this case, $(\gamma_1\gamma_2^2)^6$ has order $65$.

By what observed at the beginning, $\F_2[\tr(\gamma_1)]=\F_q$.
Since the unique scalar matrix contained in $\SL_4(\F)$ is the identity, $\Gamma$ cannot be conjugate to any 
subgroup of $\SL_4(q_0)$ with $q_0<q$. Finally, direct computations show that $\Gamma$ fixes no symplectic or unitary forms.
Our claim for $\Gamma$ is proved. 

It follows that $\langle \tau,\tau^y\rangle_{|S_7}$ is conjugate to the group $\diag(\SL_4(q),\I_3)$,
as it fixes pointwise a $3$-dimensional space. In particular,
$\langle \tau,\tau^y\rangle_{|S_7}$ is generated by subgroups of root type.
Let $L$ be the normal closure of $\langle \tau,\tau^y\rangle_{|S_7}$ in $K_{|S_7}$.
Suppose there exists a non-trivial $L$-invariant subspace $U$ of $S_7$.
By Clifford's Theorem $U$ is $1$-dimensional, and $\langle \tau,\tau^y\rangle_{|S_7}$ is conjugate to 
a diagonal subgroup. This implies that $L$ is abelian, a contradiction.
As $q>2$, by  \cite[Theorem, page 364]{Mc}, we conclude that $L=\SL_7(q)=K_{|S_7}$. 
\end{proof}

\begin{corollary}\label{p210}
Suppose $p=2 < q$, and $n=10$ or  $n\geq 12$.
Then there exists $a \in \F_{q}^*$ such that $H=\Sp_{2n}(q)$.
\end{corollary}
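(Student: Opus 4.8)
The plan is to obtain this corollary by assembling the two structural results already established in this subsection, so that the real content reduces to verifying that the element $a$ fixed at the beginning of the subsection is a legitimate choice, and then following a short chain of inclusions.

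First I would recall the construction of $a$ and check that it meets the standing hypotheses of Lemma \ref{G7} and Theorem \ref{G9car2}. For $q\neq 8$ we put $a=\sigma+\sigma^{-1}$ with $\sigma\in\F_{q^2}^*$ of order $q+1$; combining Lemma \ref{minimal field} with the identity $a^3+a=\sigma^3+\sigma^{-3}$ gives $\F_2[a^3+a]=\F_q$. For $q=8$ we take $a$ with minimal polynomial $t^3+t+1$ over $\F_2$, so again $\F_2[a^3+a]=\F_8$. In every case $a\in\F_q^*$ and $a\neq 1$: for $q\neq 8$ the equality $\sigma+\sigma^{-1}=1$ would force $\sigma^2+\sigma+1=0$, i.e.\ $\sigma$ of order $3$, which is impossible since $\mathrm{ord}(\sigma)=q+1>3$; for $q=8$ it is immediate from $a\notin\F_2$. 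These are exactly the conditions ($a\neq 0,1$ and $\F_2[a^3+a]=\F_q$) under which Lemma \ref{G7} gives $K_{|S_7}=\SL_7(q)$ and under which the argument of Theorem \ref{G9car2} applies.

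Next I would observe that $\wSL_9(q)\leq H$. Indeed $\tau=[x,y]^{24}\in H$, and $K=\langle \tau^{y^i},\tau^{y^jx}\mid i,j=0,\pm1\rangle$ is generated by conjugates of $\tau$ under the elements $y^i,y^jx\in H$; hence $K\leq H$, and since $y\in H$ we also have $\langle K^{\langle y\rangle}\rangle\leq H$. By Theorem \ref{G9car2} this last group contains $\wSL_9(q)$, so $\wSL_9(q)\leq H$. Finally, since $n=10$ or $n\geq 12$ forces $n\geq 6$, and $\wSL_9(q)\leq H$ with $9\geq 4$, Proposition \ref{wSLell} yields $H=\Sp_{2n}(q)$ for the chosen $a$, which is the assertion.

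The statement is a corollary in the strict sense: the genuinely hard work (the absolute irreducibility, primitivity, and classical-group analysis of $K_{|S_7}$ in Lemma \ref{G7}, and the cohomological bootstrap from $S_7$ to $S_9$ in Theorem \ref{G9car2}) is already behind us. Thus I do not expect any serious obstacle here; the only point requiring care is the bookkeeping that ties the specific $a$ produced at the start of the subsection to the nondegeneracy and field conditions $a\neq 0,1$ and $\F_2[a^3+a]=\F_q$, since it is precisely these conditions that make the two cited results applicable and hence legitimize the existence claim for $a$.
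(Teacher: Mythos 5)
Your proposal is correct and follows essentially the same route as the paper: fix the same $a$ (order-$(q+1)$ trace for $q\neq 8$, the root of $t^3+t+1$ for $q=8$), note $\wSL_9(q)\leq \langle K^{\langle y\rangle}\rangle\leq H$ by Theorem \ref{G9car2}, and conclude via Proposition \ref{wSLell}. Your explicit verification that $a\neq 0,1$ and $\F_2[a^3+a]=\F_q$ is a welcome bit of bookkeeping that the paper leaves implicit (it is set up earlier in the subsection and invoked inside the proof of Theorem \ref{G9car2}), but it does not change the argument.
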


\begin{proof}
Take $a\in \F_q^*$ as in \eqref{a sigma}. The subgroup $K=\langle G, G^x\rangle$ fixes $S_7$ inducing $\SL_7(q)$  by Lemma \ref{G7},
and so it is a subgroup $\K_7$: the statement follows from Theorem \ref{wSLell}.
\end{proof}

\subsection{Subcase $p$ odd, $n=10$ or $n\geq 12$} 

Take $\tau$ as in \eqref{deftau} and define 
\begin{equation*}
K= \left\langle  \tau^{y^i}, \tau^{yxy^j}, \tau^{(yx)^2y^k}\mid i,j,k\in \left\{0,\pm 1\right\}\right\rangle.
\end{equation*}
Recall that $\tau \in \wSL_9(q)$ and that, by the analysis of Section \ref{azione}, 
$\tau_{|V}$ is a transvection of center $u=e_{n-4} - 2a^{-1} e_{n-1} +  e_{n}$, such that
\begin{equation*}
\left\{\begin{array}{rcl}
\tau e_{n-7} & =& e_{n-7} + 4a^2 u,\\
\tau e_j  & = & e_j- 4a^2 u,\quad j \in \{n-3,n-2\},\\
\tau e_j & = & e_j, \quad j \in \{1,\ldots,n \}\setminus\{n-7,n-3,n-2 \}.
\end{array}\right. 
\end{equation*}
Note that $(yx)^{-1}S_9\subseteq S_{10}=\langle S_9, e_{n-9}\rangle$.
From $\tau^{yx} e_{n-9}=e_{n-9} +4a^2 u_4 $ with $u_4=(yx)^{-1}u \in S_9$ we get $\tau^{yx}S_9=S_9$.
Now, suppose $n= 13$ or $n\geq 15$. Then $(yx)^{-2} S_9\subseteq S_{11}=\langle S_{9}, e_{n-10},e_{n-9}\rangle$.
From $\tau^{(yx)^2} e_{n-9}=e_{n-9}$ and $\tau^{(yx)^2} e_{n-10}=e_{n-10} + 4a^2 u_7 $ with 
$u_7=(yx)^{-2}\in S_9$ we get $\tau^{(yx)^2}S_9=S_9$.
As $y$ fixes $S_9$ it follows that $K$ fixes $S_9$, inducing the identity on $\frac{W_9}{S_9}$, i.e., $K$ satisfies
condition $(\mathrm{b}_9)$ of Definition \ref{Kell}.
The same conclusion can be reached also when $n\in \{10,12,14\}$.

The generators of $K$ are bireflections, being conjugates of $\tau$, and their restrictions to $V$ are transvections.
The center of $(\tau_{|V})^T$ is $\overline u= e_{n-7}-e_{n-3}-e_{n-2}$.

\begin{lemma}\label{G9}
Suppose that  $a\in \F_q^\ast$ is such that:
$$\mathrm{(a)}\quad \F_p[a^3]=\F_q;\qquad  \mathrm{(b)}\quad a^{12}-14 a^9+92 a^6-312 a^3 +512\neq 0;
\qquad \mathrm{(c)}\quad (a^3+2)(a^6-27)\neq 0.$$
Then $K_{|S_9}=\SL_9(q)$.
\end{lemma}

\begin{proof}
By condition (b) the images of $u$ under the generators of $K$ are linearly independent. Hence
they generate $S_9$. By condition (c) also the images of $\overline u$
under the generators of $(K_{|S_9})^T$ are linearly independent.

Set $u_1=u$, $u_2=y^{-1}u$ and $u_3= yu$.
The group $G$ in \eqref{S9} fixes $U_3$ and, with respect to the basis
$\{8a u_1, u_2, a u_1 + a^3  u_2 + a^2 u_3\}$, its generators $\tau, \tau^y,\tau^{y^2}$ act  as
$$\begin{pmatrix}
1&1&0\\
0&1&0\\
0&0&1
\end{pmatrix},\ \begin{pmatrix}
1&0&0\\
-64a^3&1&0\\
0&0&1
\end{pmatrix},\ \begin{pmatrix}
-7& 1 & a^3-1\\
-64a^3 & 1+8a^3 & 8a^3(a^3-1)\\
64 &  -8   &  9-8a^3
\end{pmatrix}.$$
For  $q\ne 9$, by Dickson's Lemma and assumption (a), the group
$\langle \tau, \tau^y\rangle $ induces $\SL_2(q)$ on $U_2=\left\langle 8a u_1, u_2\right\rangle$.
Similarly, the group  $\left\langle \tau^T, \left(\tau^{y^2}\right)^T\right\rangle$
fixes $\overline{U}_2=\langle 8a \overline{u}, (y^2)^T \overline{u}  \rangle$
acting as  $\left\langle\begin{pmatrix} 1 & 1 \\ 0 & 1\end{pmatrix}, \begin{pmatrix} 1 & 0 \\ -64a^3 & 1\end{pmatrix}\right\rangle$, hence inducing $\SL_2(q)$.
For $q=9$, take $\overline{U}_3=\langle \overline{U}_2,  a \overline{u} + a^2 y^T \overline{u} + a^3 (y^2)^T\overline{u} \rangle$:
direct calculations give that both the restrictions $G_{|U_3}$ and $(G^T)_{|\overline U_3}$ are isomorphic to $\SL_3(9)$.
Set $G_2=\langle \tau, \tau^y \rangle$ when $q\neq 9$, and call $G_2$ the stabilizer of $U_2$ and $a u_1 + a^3  u_2 + a^2 u_3$ in $G$
when $q=9$.
The fixed point space $T_7$  of $G_2$ in $S_9$ has dimensions $7$ in both cases. Thus $G_{2|S_9}$ is generated
by subgroups of root type.
With respect to the decomposition $S_9=U_2\oplus T_7$ the element $z=\diag(-\I_2,\I_7)$ is in $G_{2|S_9}$.

Now ,suppose  that $U$ is a $K$-invariant subspace of $\F^9$.
If $U\cap U_2\ne \{0\}$,  then $U_2\le U$ by the irreducibility of
$\SL_2(q)$. From $u\in U$  we get $U=\F^9$, by what observed at the beginning.
If $U\cap U_2=\{0\}$, then $z_{|U}$ does not have
the eigenvalue $-1$. Calling $\overline U$ a complement of $U$ which is $\left(K_{|S_9}\right)^T$-invariant,
we  have  $\overline{U} \cap \overline{U}_2\neq \{0\}$, whence $\overline{U}=\F^9$ and  $U=\{0\}$.
It follows that $K_{|S_9}$ is irreducible: by Lemma \ref{McLaughlin} with $B=G_2$ we obtain $K_{|S_9}=\SL_9(q)$.
\end{proof}

\begin{corollary}\label{pno213}
Suppose that $p$ is odd,  and $n= 10$ or $n\geq 12$.
Then there exists $a \in \F_{q}^*$ such that $H=\Sp_{2n}(q)$.
\end{corollary}

\begin{proof}
We first show that  there exists $a\in \F_q^*$ which satisfies the 
assumptions of Lemma \ref{G9}.
If $q=p$, we have to exclude at most $21$ values of $a$.
So, if $p\geq 23$ we are done. If $p=3,5,7$ take $a=11$; if $11\leq p \leq 19$ take $a=-2$.
Suppose now that $q=p^f$, with $f\geq 2$, and let $\N(q)$ be the number of elements $b\in \F_q^*$ 
such that $\F_p[b^3]\neq 
\F_q$. We have to check when $p^f-1-\N(q) > 18$ (the condition $a^3\neq -2$ can be now dropped).
By Lemma \ref{campoN} we have
$\N(q)\leq 3p\frac{ p^{\left \lfloor f/2\right \rfloor} -1}{p-1}$
and hence it suffices to show that
$p^f- 3p\frac{ p^{\left \lfloor f/2\right \rfloor} -1}{p-1} > 19$.
This condition is fulfilled, unless $q=3^2,5^2,3^3$.
If $q=9,25$ take $a$ whose minimal polynomial over $\F_p$ is $t^2-2$.
If $q=27$, take $a$ whose minimal polynomial over $\F_3$ is $t^3-t+1$.

Now, by Lemma \ref{G9}, the subgroup $K$ fixes $S_9$ inducing $\SL_9(q)$ and so it is a subgroup $\K_9$: the statement follows from Theorem \ref{wSLell}.
\end{proof}

\section{Cases $n\in \{7,9,11\}$ and $q>2$}\label{caso n=7,9,11}

In this section we prove the $(2,3)$-generation of $\Sp_{2n}(q)$ when $n \in \{7,9,11\}$,
adapting to these cases the computations of Section \ref{sec4}.

\subsection{Case $n=7$ and $q>2$}

The element $\tau=[x,y]^{8}$ belongs to $N \rtimes \wSL_7(q)$ and its characteristic polynomial
is 
$$\chi_\tau(t)=\left\{\begin{array}{cl}
   (t + 1)^{10}(t^2 + a^8 t + 1)^2 & \textrm{ if } p=2,\\
(t-1)^{14} & \textrm{ if } p>2.
\end{array}\right.$$
The subgroup
$$K=\left\langle \tau^{y^i}, \tau^{y x}, \tau^{y^2[x,y]^4 y^j} \mid 
i,j=0,\pm 1  \right\rangle$$
fixes $V$, namely it is contained in $N\rtimes \wSL_7(q)$.

We start with the case $p=2$. As done in Section \ref{sec4}, if $q\neq 8$ we take $\sigma\in \F_{q^2}^*$ of order 
$q+1$ and define $a=\sigma+\sigma^{-1}$. In this way, $\F_2[a^3+a]=\F_q$.
It will be useful to observe that the following polynomials $f_i(t)\in \F_2[t]$ do not have $\sigma$ as root:
$$f_1(t)=t^3+t+1, \quad f_2(t)=t^3+t^2+1,\quad f_3(t)=t^5+t^2+1,\quad f_4(t)=t^5+t^3+1.$$
The same holds also for  $f_5(t)=t^8+t^7+t^6+t^4+t^2+t+1$, unless $q=16$.
So, for this value of $q$, we require that $\sigma$ has minimal polynomial $t^8+t^5+t^4+t^3+1$ over $\F_2$.
Finally, if $q=8$ take $a \in \F_8^*$ whose minimal polynomial over $\F_2$ is $t^3+t+1$.

\begin{lemma}\label{G72}
Assume $p=2 < q$ and take $a\in \F_q^*$ as previously described.
Then, the restriction of $K$ to $V$ is the group $\SL_{7}(q)$.
\end{lemma}

\begin{proof}
If $q=8$, let $g_1,g_2,g_3,g_4$ be the restrictions to $V$ of $\tau, \tau^{y^2}, \tau^{yx}, \tau^{y^2[x,y]^4y}$, respectively.
Then 
$$\bigcup_{k \in L}  \pi( (g_3 g_1 g_2  )^k g_4   ) =\pi(\SL_7(8)), \quad \textrm{where }
 L= \{1,2,4,6\}.$$
By Lemma \ref{LPS} we obtain $K_{|V}=\SL_7(8)$.
So, assume $q\neq 8$.

We first prove that the group $K_{|V}$ is absolutely irreducible.
The characteristic polynomial of $\tau_{|V}$ is 
$(t + 1)^5 (t^2 + a^8 t + 1)$  and  $\tau_{|V}$ has a fixed point space of dimension $5$.
Let $\rho=\sigma^{8}$. The  eigenspaces of $\tau_{|V}$ 
and 
$(\tau_{|V})^T$ relative to the eigenvalue $\rho^{\pm 1}$ are generated, respectively, by 
the following vectors, where $\alpha_1(\sigma)=\frac{\sigma^3+1}{\sigma (\sigma+1)^{4}}$ and
$\alpha_2(\sigma)=\sigma^3\alpha_1(\sigma)$:
\begin{equation}\label{sp}
\begin{array}{rcl}
s_{\rho^{\pm 1}} & = &e_{n-4}+ (\sigma^{\pm 1} + 1)  e_{n-1}+ e_{n},\\
\overline s_{\rho^{\pm 1}} & = & e_{n-4}+ \alpha_1(\sigma^{\pm 1}) e_{n-3} + \alpha_2(\sigma^{\pm 1}) e_{n-2} +e_{n-1}+ \sigma^{\mp 1} e_n.
\end{array}
\end{equation}
Observe that $\langle x s_{\rho^{\pm 1}}\rangle=\langle s_{\rho^{\mp 1}}\rangle$ and
$\langle x^T \overline s_{\rho^{\pm 1}} \rangle=\langle \overline s_{\rho^{\mp 1}}\rangle$.
Take the  square matrices $M_\pm\in \Mat_7(\F)$ whose columns are the restrictions to $V$  of the following vectors:
$$M_\pm : \quad  s_{\rho^{\pm 1}},\;   \tau^y s_{\rho^{\pm 1}}, \;   \tau^{y^2} s_{\rho^{\pm 1}},\; 
 \tau^{yx} s_{\rho^{\pm 1}} ,\; \tau^{y^2[x,y]^4} s_{\rho^{\pm 1}} , \;  
 s_{\rho^{\mp 1}}, \; \tau^{y^2[x,y]^4}  s_{\rho^{\mp 1}}.$$
We obtain that $\det(M_\pm)=a^{22} \sigma^{\mp 14} (f_3(\sigma^{\pm 1}))^2 (f_4(\sigma^{\pm 1}))^3 \neq 0$.
Now, let $\overline{M}_\pm$ be the matrix  whose columns are the restrictions to $V$  of the following vectors: 
$$\overline{M}_\pm : \quad  \overline s_{\rho^{\pm 1}},\;   (\tau^y)^T  \overline s_{\rho^{\pm 1}}, \;   
(\tau^{y^2})^T \overline s_{\rho^{\pm 1}},\;  (\tau^{y^2[x,y]^4y})^T \overline s_{\rho^{\pm 1}} ,\; 
\overline s_{\rho^{\mp 1}}, \; (\tau^{y^2})^T  \overline s_{\rho^{\mp 1}}, \;    (\tau^{y^2[x,y]^4y})^T \overline s_{\rho^{\mp 1}}.$$
Then $\det(\overline{M}_\pm)=
a^{18} (a+1)^5 \sigma^{\mp 15} (\sigma^{\pm 1} + 1)f_1(\sigma^{\pm 1}) 
(f_2(\sigma^{\pm 1}) f_5(\sigma^{\pm 1}))^2 \neq 0$: it follows that $M_\pm$ and $\overline{M}_\pm$ are invertible matrices.

Let $\{0\}\neq U$ be a $K_{|V}$-invariant subspace of $\F^7$ and
let $\overline{U}$ be a complement of $U$  which is $(K_{|V})^T$-invariant.
If $\tau_{|U}$ has both the eigenvalues $\rho, \rho^{- 1}$, then $s_{\rho},s_{\rho^{-1}}\in U$. Since 
$\det(M_+)\neq 0$, we obtain $U=\F^7$.
If  $\tau_{|U}$ has only one eigenvalue $\rho^{\pm 1}$,
then $\left( (\tau_{|V})^T\right)_{|\overline{U}} $ has the eigenvalue $\rho^{\mp 1}$. In this case, using the matrices $M_\pm $
and $\overline M_\mp$, we get
$\dim(U)\geq 4$ and $\dim(\overline{U})\geq 4$, a contradiction.
Finally, if $\tau_{|U}$ has only the eigenvalue $1$, then $(\tau_{|V})^T$ has both the eigenvalues 
$\rho, \rho^{-1}$ in $\overline{U}$. Since $\det(\overline M_+)\neq 0$, we get $\overline{U}=\F^7$ and so 
$U=\{0\}$. We conclude that $K_{|V}$ is absolutely irreducible.

Suppose now that $K_{|S_7}$ is imprimitive. This implies that the elements of $K_{|V}$  have orders that divide $7!(q-1)^7$.
Assume first $q\geq 16$. Then, there exists a  Zsigmondy prime $\ell\geq 11$ for $2^{f}+1$ (see \cite{Z}) 
that divides the order $\frac{q+1}{\gcd(q+1,3)}$ of $\tau_{|V}$. This clearly gives a contradiction.
If $q=4$, a similar contradiction can be obtained considering 
the element $\tau_{|V} (\tau^y)_{|V} ((\tau^{y^2})_{|V})^2$ which  has order $682$.

We can now apply \cite[Theorem 7.1]{GS}: $K_{|V}$ is a classical group in a natural representation.
Suppose there exists $g\in \GL_{7}(\F)$ such that $(K_{|V})^g\leq \GL_{7}(q_0)(\F^* \, \I_{7})$, where $\F_{q_0}$ is 
a subfield of $\F_q$. Set $(\tau_{|V})^g=\vartheta ^{-1}\tau_0$, with $\tau_0\in \GL_{7}(q_0)$ and  $\vartheta\in \F$.
Since $\tau_{|V}$ has the similarity invariant (i.e., the characteristic polynomial of a block
of the rational canonical form) $t+1$, the matrix $\tau_0=\vartheta(\tau_{|V})^g$ has the similarity 
invariant  $t+\vartheta$. It follows that $\vartheta \in \F_{q_0}$ and  the trace of $\tau_{|V}$ belongs to the subfield
$\F_{q_0}$. From  $\tr (\tau_{|V})=(a+1)^8$ we obtain $a\in \F_{q_0}$, and so
$\F_q=\F_2[a^3+a]\leq \F_2[a]\leq \F_{q_0}$.
We conclude that either $K_{|V}=\SL_{7}(q)$ or $K_{|V}\leq \SU_{7}(q_1^2) (\F_q^* \, \I_{7})$ with $q_1^2=q$.
Suppose the latest case occurs. 
Let $\psi: \GL_{7}(q)\to \GL_{7}(q)$ be the morphism defined by $(a_{i,j})^\psi=(a_{i,j}^{q_1})$. 
Then $(\tau_{|V})^\psi$ and $\lambda(\tau_{|V})^{-T}$ are conjugate elements for some $\lambda \in \F_q^*$.
Since the similarity invariants of $\tau_{|V}$ and $(\tau_{|V})^{-1}$ are
$p_1(t)=\ldots=p_4(t) = t + 1$ and
$p_5(t)= t^3 + (a + 1)^8 t^2 + (a + 1)^8 t + 1$,
we obtain $\lambda=1$ and $a^3+a \in \F_{q_1}$, 
whence the contradiction $\F_q=\F_2[a^3+a]\leq \F_{q_1}$.
This proves that $K_{|V}=\SL_{7}(q)$.
\end{proof}

We now consider the case $p$ odd.
The restriction of $\tau$ to $V$ is a transvection of
center $u=e_3-e_7$ such that
$$
\left\{\begin{array}{rclcrcl}
\tau e_j & =& e_j -2 (-1)^j a^2 u, \quad j \in \{4,5\},& \quad &
\tau e_6 & =& e_6+ 4 a u,\\
\tau e_j & = & e_j, \quad j \in \{1,2,3,7 \}.
\end{array}\right. 
$$
The generators of $K$ are bireflections, being conjugates of $\tau$, and their restrictions to $V$ are transvections.
The center of $(\tau_{|V})^T$ is $\overline{u}=e_4-e_5-2a^{-1}e_6$.

\begin{lemma}\label{K9}
Assume $p$ odd. Suppose that  $a\in \F_q^\ast$ is such that:
$$\mathrm{(a)}\quad\F_p[a^3]=\F_q; \qquad \mathrm{(b)}\quad a^3-8 \neq 0;\qquad \mathrm{(c)}\quad (a+1)(a^2-2)(a^2-2a+2)\neq 0. $$
Then $K_{|V}=\SL_7(q)$.
\end{lemma}

\begin{proof}
By condition (b) the images of $u$ under the generators of $K$ are linearly independent. Hence they generate $V$.
Set $\overline{u}_2 =((\tau^y)_{|V})^T\overline{u}-\overline{u}$.
By condition (c) also the set consisting of
$$(\tau_{|V})^T \overline{u},\;((\tau^y)_{|V})^T \overline{u},\;((\tau^{y^2})_{|V})^T \overline{u},\; ((\tau^{yx})_{|V})^T \overline{u},\; 
((\tau^{y^2[x,y]^4})_{|V})^T \overline{u},$$
$$((\tau^{y^2[x,y]^4y})_{|V})^T \overline{u}_2,\; ((\tau^{y^2[x,y]^4y^2})_{|V})^T \overline{u}_2$$
is linearly independent.

Set $u_1=u$, $u_2=y^{-1}u$ and $u_3=yu$. The group $G$ in \eqref{S9} fixes $U_3=\langle u_1,u_2,u_3\rangle$ and, with respect to the
basis $\{4a^2 u_1, -u_3, a^2u_1 -au_2+u_3\}$, its generators $\tau, \tau^y, \tau^{y^2}$ act as
\begin{equation*}
\begin{pmatrix}
1&1&0\\
0&1&0\\
0&0&1
\end{pmatrix},\ \begin{pmatrix}
1-4a^3 & 1 & -(a^3+1)\\
16a^3 & -3 & 4(a^3+1)\\
16a^3 & -4 & 1+4(a^3+1)
\end{pmatrix},\ \begin{pmatrix}
1 & 0 & 0\\
16a^3 & 1 & 0\\
0 & 0 & 1
\end{pmatrix}.
\end{equation*}
For $q\neq 9$, by Dickson's Lemma and  assumption (a), the group $\left\langle \tau , \tau^{y^2}\right\rangle$
induces  $\SL_2(q)$ on $U_2=\langle u_1,u_3\rangle$.
Similarly, the group $\left\langle (\tau_{|V})^T, ((\tau^{y})_{|V})^T\right\rangle$
fixes $\overline{U}_2=\langle \overline{u}, \frac{1}{16a^3} \overline{u}_2 \rangle$
acting as $\left\langle\begin{pmatrix}    1 & 1 \\    0 & 1\end{pmatrix}, 
\begin{pmatrix} 1 & 0 \\ 16a^3 & 1 \end{pmatrix}\right\rangle$, hence inducing $\SL_2(q)$.
For $q=9$, take $\overline{U}_3=\langle \overline{U}_2, \frac{a^3-1}{a^3} \overline{u} - \frac{1}{a^3} \overline{u}_2+
\frac{1}{a^3} ((\tau^{y^2})_{|V})^T \overline{u} \rangle$: direct calculations give that both the restrictions
$G_{|U_3}$ and $((G_{|V})^T)_{|\overline{U}_3}$ are isomorphic to $\SL_3(9)$.
Set $G_2=\langle \tau, \tau^{y^2}\rangle$ when $q\neq 9$, and call $G_2$ the stabilizer of $U_2$ and $a^2u_1 -au_2+u_3$ in $G$ when $q=9$.
The fixed point space $T_5$ of $G_2$ in $V$ has dimension $5$ in both cases. Thus $G_{2|V}$ is generated by subgroups of root type.
With respect to the decomposition $V=U_2\oplus T_5$, the element $z=\diag(-\I_2,\I_5)$ is in $G_{2|V}$.

Now suppose that $U\neq \{0\}$ is a $K$-invariant subspace of $\F^7$.
If  $U\cap U_2\neq \{0\}$, then $U_2\leq U$ by the irreducibility of $\SL_2(q)$. From $u\in U$ we get $U=\F^7$, by what observed at the beginning.
If $U\cap U_2=\{0\}$, then $z_{|U}$ does not have the eigenvalue $-1$. 
Calling $\overline U$ a complement of $U$ which is $\left(K_{|V}\right)^T$-invariant, 
we must have $\overline{U}\cap \overline{U}_2\neq \{0\}$, whence $\overline{U}=\F^7$ and $U=\{0\}$.
Thus, $K_{|V}$ is irreducible: by Lemma \ref{McLaughlin} with $B=G_2$ we obtain  $K_{|V}=\SL_7(q)$.
\end{proof}

\begin{lemma}\label{7ex}
If $q$ is odd, there exists $a \in \F_q^*$ satisfying all the hypotheses of {\rm Lemma \ref{K9}}.
\end{lemma}

\begin{proof}
If $q=p$, we have to exclude at most $8$ values of $a$.
So, if $p\geq 11$ we are done; if $p \in \{3,5\}$, take $a=1$; if $p=7$, take $a=5$.
Suppose now  $q=p^f$, with $f\geq 2$, and let $\N(q)$ be the number of elements $b\in \F_q^*$ 
such that $\F_p[b^3]\neq \F_q$. We have to check when $p^f-1-\N(q) > 4$,
as the conditions on $a$ of shape $a^3\neq c$ can be dropped.
By Lemma \ref{campoN}, it suffices to check when
$p^f - 3 p\frac{ p^{\left \lfloor f/2\right \rfloor} -1}{p-1} > 5 $.
This condition is fulfilled, unless $q=3^2$.
If $q=9$, take  $a\in\F_9$ whose minimal polynomial over $\F_3$ is $t^2-t-1$.
\end{proof}

\begin{proposition}\label{main7}
Suppose $q>2$. Then there exists $a \in \F_q^*$ such that $H=\Sp_{14}(q)$. 
\end{proposition}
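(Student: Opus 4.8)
The plan is to dispatch the generic fields by the Aschbacher maximal–subgroup method used for $n=4,5,6$, and to settle a short list of small $q$ by a direct computation with Lemma~\ref{LPS}. First I fix $a\in\F_q^*$. For even $q\geq 32$ and for odd $q\neq 3,7$, Lemma~\ref{7ex} furnishes an $a$ meeting the hypotheses of Lemma~\ref{G72} (when $p=2$) or of Lemma~\ref{K9} (when $p>2$) together with the extra nonvanishing needed for Lemma~\ref{irr7}. For such an $a$ the group $H$ is absolutely irreducible, and the subgroup $K\leq N\rtimes\wSL_7(q)$ satisfies $K_{|V}=\SL_7(q)$; consequently $\PSL_7(q)$ is a composition factor of $H$ and $|\SL_7(q)|$ divides $|H|$.

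Now suppose $H$ were contained in a maximal subgroup $M$ of $\Sp_{14}(q)$, and run through the classes of \cite{Ho}. Because $14$ is neither a prime power nor a proper perfect power, the classes $\Cl_6$ and $\Cl_7$ are empty; absolute irreducibility excludes $\Cl_1$; and $\Cl_5$ is excluded by Lemma~\ref{subfield7}. The imprimitive subgroup $\Sp_2(q)\wr\Sym(7)$, the field–extension subgroup $\Sp_2(q^7)$ and the tensor subgroup (of type $\Sp_2(q)\otimes\Or_7(q)$) all have order well below $|\SL_7(q)|$, so none can contain $H$. The unitary field–extension subgroup $\mathrm{GU}_7(q).2$ and every member of $\ClS$ are ruled out by the type of the composition factor: $\mathrm{GU}_7(q).2$ has only $\PSU_7(q)$ as a large factor, while a member of $\ClS$ containing $H$ would have socle a cover of $\PSL_7(q)$, which has no absolutely irreducible $14$–dimensional representation in characteristic $p$; neither accommodates the factor $\PSL_7(q)$ of $H$.

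The single genuinely large geometric overgroup left is the $\Cl_2$ stabilizer $\wGL_7(q).2$ of the unordered Lagrangian pair $\{V,JV\}$, which really does contain a copy of $\SL_7(q)$. I would exclude it on the generators: $y=y_1y_2$ carries $e_1$ into $JV$ but $e_2$ into $V$, so $H$ stabilizes neither $V$ nor the pair $\{V,JV\}$, whence $H\not\leq\wGL_7(q).2$. For $q$ even there remains the orthogonal class $\Cl_8$; as in Proposition~\ref{q=2} and Theorem~\ref{main6} I would show that no quadratic form $Q$ polarizing to $J$ can be invariant under both $x$ and $y$, the invariances forcing a chain of equalities among the values $Q(e_i)$ that terminates in $1=Q(e_i)=Q(e_j)=0$. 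With every class excluded, $H=\Sp_{14}(q)$. Note that, $n=7$ being odd, the exceptional alternative $\Omega_{2n}^-(q)\unlhd M$ of Lemma~\ref{LPS}(2) never occurs, which is precisely what keeps the odd–$n$ case clean.

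It remains to treat $q\in\{3,4,7,8,16\}$, outside the scope of Lemma~\ref{7ex}. For each I would pick an explicit $a\in\F_q^*$ and a finite exponent set $L$, check $\bigcup_{k\in L}\varpi\big((xy)^k y\big)=\varpi(\Sp_{14}(q))$, and conclude by Lemma~\ref{LPS}(2), the orthogonal alternative being unavailable since $n$ is odd. The main obstacle I anticipate is exactly the pair of ``large'' overgroups $\wGL_7(q).2$ and $\mathrm{GU}_7(q).2$ that truly contain an $\SL$–type subgroup of comparable order: these resist any order estimate, and their exclusion must rest on the precise features of $K_{|V}$ — that it is $\SL_7(q)$ rather than $\SU_7(q)$, and that $y$ mixes the two Lagrangians $V$ and $JV$.
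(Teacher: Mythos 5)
Your overall architecture matches the paper's: a generic $a$ from Lemma~\ref{7ex} feeding Lemmas~\ref{irr7}, \ref{G72} and \ref{K9}; exclusion of $\Cl_5$ via Lemma~\ref{subfield7}; order arguments for the small geometric classes and $\ClS$; a quadratic-form computation for $\Cl_8$ when $q$ is even; and Lemma~\ref{LPS} computations for $q\in\{3,4,7,8,16\}$ (where, as you correctly note, $n=7$ odd rules out the $\Omega^-_{2n}$ alternative). But there is a genuine gap, in exactly the step you yourself flag as delicate: your exclusion of the $\Cl_2$ subgroup of type $\GL_7(q).2$. You argue that $y$ sends $e_1$ into $JV$ but $e_2$ into $V$, so $H$ does not stabilize the pair $\{V,JV\}$. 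This rules out only \emph{one} member of the conjugacy class: the maximal subgroups of this type are the stabilizers of \emph{arbitrary} decompositions $\tilde V=U_1\oplus U_2$ into complementary totally isotropic $7$-spaces, and nothing you have said prevents $H$ from preserving some other such pair. To exclude the whole class you need a conjugation-invariant obstruction. The paper uses the trace: if $H$ preserved $\{U_1,U_2\}$, irreducibility forces the permutation action on the two summands to be transitive; $y$, of order $3$, fixes both, so $x$ swaps them, hence $xy$ swaps them and in an adapted basis is block off-diagonal, giving $\tr(xy)=0$, against $\tr(xy)=a\neq 0$. (The same device is used in Propositions~\ref{main8}, \ref{main9} and \ref{main11}.) As written, your argument would fail for a conjugate of the Lagrangian-pair stabilizer.

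Two smaller points. For $\ClS$, your claim that the socle ``would have'' to be a cover of $\PSL_7(q)$ is not immediate: a priori the socle only needs $\PSL_7(q)$ as a section, so a size bound is still required to close the case; the paper simply quotes $|M|\leq q^{42}$ from \cite[Theorem 5.2.4]{KL} against $|H|\geq|\SL_7(q)|>q^{47}$, which is cleaner than the representation-theoretic route you sketch. On the other hand, your explicit treatment of $\mathrm{GU}_7(q).2$ — in substance, that a Zsigmondy prime of $q^7-1$ divides $|\SL_7(q)|$, hence $|H|$, but not $|\mathrm{GU}_7(q).2|$ — addresses a subgroup the paper passes over in silence when it asserts that for $p>2$ only the type $\GL_7(q).2$ remains, so that part of your plan is sound (indeed more careful than the paper), provided you make the divisibility argument explicit. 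The small cases are left by you as promissory computations; the paper supplies the data ($a=1$ for $q=3,7$; $a$ with minimal polynomial $t^f+t+1$ over $\F_2$ for $q=2^f$, $f=2,3,4$; and explicit exponent sets $L_q$), so nothing is missing there in principle — but the $\Cl_2$ step must be repaired as above before the proof stands.
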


\begin{proof}
Suppose first $p=2$. If $q \not\in \{8,16\}$, take $a=\sigma+\sigma^{-1}$, where $\sigma\in \F_{q^2}^*$ has order $q+1$.
If $q=2^f$ with $f \in\{3,4\}$, take $a \in \F_q$ whose minimal polynomial over $\F_2$ is $t^f+t+1$.
If $p$ is odd, take $a \in \F_q^*$ that satisfies the hypotheses of Lemma \ref{K9}
and whose existence follows from Lemma \ref{7ex}.
Then $H$ contains a subgroup $K$ of $N\rtimes \SL_7(q)$ which induces $\SL_7(q)$ on $V$.
By Theorem \ref{Guralnick-Saxl} we  conclude that  $H=\Sp_{14}(q)$.
\end{proof}

\subsection{Case $n=9$ and $q>2$}

The element $\tau=[x,y]^{12}$ belongs to $N \rtimes \wSL_9(q)$ and its characteristic polynomial
is 
$$\chi_\tau(t)=\left\{\begin{array}{cl}
 ( t + 1)^{14} (t^2 + (a^3 + a)^4 t + 1)^2  & \textrm{ if } p=2,\\
(t-1)^{18} & \textrm{ if } p>2.
\end{array}\right.$$
The subgroup
$$K=\left\{\begin{array}{ll}
\left\langle \tau,\tau^y, \tau^{y^2},  \tau^{y^2[x,y]^6 y^2 x} \right\rangle & \textrm{if } p=2,\\[5pt]
\left\langle \tau^{y^i}, \tau^{yx}, \tau^{y^2[x,y]^6 y^j}\mid i,j=0,\pm 1  \right\rangle & \textrm{if } p>2
\end{array}\right.$$
fixes $V$, and so it is contained in $N\rtimes \wSL_9(q)$. 
Direct computation shows that  $K$ fixes $S_7$ and induces the identity on $\frac{W_7}{S_7}$ (so it satisfies
condition $(\mathrm{b}_7)$ of Definition \ref{Kell}). We now prove that $K$ induces $\SL_7(q)$ on $S_7$.

We start with the case $p=2$. 
As done in Section \ref{sec4}, if $q\neq 8$ we take $\sigma\in \F_{q^2}^*$ of order 
$q+1$ and define $a=\sigma+\sigma^{-1}$. In this way, $\F_2[a^3+a]=\F_q$.
It will be useful to observe that the following polynomials $f_i(t)\in \F_2[t]$ do not have $\sigma$ as root:
$$ f_1(t)=t^5+t^4+t^3+t^2+1,$$
$$f_2(t)=t^{31}+t^{30}+t^{29}+t^{28}+t^{27}+t^{25}+t^{23}+t^{18}+t^{15}+t^{13}+t^9+t^7+t^4+t^2+1.$$
The same holds also for $f_3(t)=t^{12}+t^{11}+t^9+t^7+t^6+t^5+t^3+t+1$, unless $q=64$.
So, for this value of $q$, we require that $\sigma$ has minimal polynomial $t^{12}+t^8+t^7+t^6+t^5+t^4+1$ over $\F_2$.
Next, we observe that $\sigma^5+1\neq 0$ unless $q=4$.
Finally, if $q=8$ take $a \in \F_8^*$ whose minimal polynomial over $\F_2$ is $t^3+t+1$.

\begin{lemma}\label{K9even}
Assume $p=2 < q$ and take $a\in \F_q^*$ as previously described.
Then, the restriction of $K$ to $S_7$ is the group $\SL_{7}(q)$.
\end{lemma}

\begin{proof}
If $q\in \{4,8\}$, let $g_1,g_2,g_3,g_4$ be the respective restrictions of $\tau, \tau^{y}, \tau^{y^2}, \tau^{y^2[x,y]^6y^2x}$ to $S_7$.
Take  $L_4=\{1,5,7,11\}$ and $L_8=\{3,6,8,31\}$: then
\begin{equation}\label{pi7}
\bigcup_{k \in L_q}  \pi( (g_3 g_1 g_2  )^k g_4   ) =\pi(\SL_7(q)). 
\end{equation}
By Lemma \ref{LPS} we obtain $K_{|S_7}=\SL_7(q)$.
So, assume $q\neq 4,8$.

We first prove that the group $K_{|S_7}$ is absolutely irreducible.
The characteristic polynomial of $\tau_{|S_7}$ is 
$(t+1)^5(t^2 + (a^3 + a)^4 t + 1)$ and  $\tau_{|S_7}$ has a fixed point space of dimension $5$.
Let $\rho=\sigma^{12}$. Then $1\neq \rho\neq \rho^{-1}$; the  eigenspaces of $\tau_{|S_7}$ 
and 
$(\tau_{|S_7})^T$ relative to the eigenvalue $\rho^{\pm 1}$ are generated, respectively, by
the vectors $s_{\rho^{\pm 1}}$ and $\overline s_{\rho^{\pm 1}}$ as in \eqref{sp}, where
$\alpha_1(\sigma)=\frac{\sigma^{-1}}{\sigma^3 + 1}$ and $\alpha_2(\sigma)=\sigma^5 \alpha_1(\sigma)$.
Take the  square matrices $M_\pm\in \Mat_7(\F)$ whose columns are the restrictions to $S_7$  of the following vectors:
$$M_\pm : \quad  s_{\rho^{\pm 1}},\;   \tau^y s_{\rho^{\pm 1}}, \;   \tau^{y^2} s_{\rho^{\pm 1}},\; 
 \tau^{y^2[x,y]^6y^2x} s_{\rho^{\pm 1}} ,\;    s_{\rho^{\mp 1}},\;
\tau^y s_{\rho^{\mp 1}}, \; \tau^{y^2[x,y]^6y^2x}  s_{\rho^{\mp 1}}.$$
We obtain that $\det(M_\pm)=a^{13}(a+1)^{20}    \sigma^{\mp 12} (\sigma^{\pm 1} +1)
(\sigma^{\pm 5}+1)^3 f_1(\sigma^{\pm 1})\neq 0$. 
Let now $\overline{M}_\pm$ be the matrix obtained by $M_\pm$ replacing $s_{\rho^{\pm 1}}$ with $\overline s_{\rho^{\pm 
1}}$, $\tau^{y^{\pm 1}}$ with $(\tau^{y^{\mp 1}} )^T$, and $\tau^{y^2[x,y]^6y^2x}$ with its transpose.
We have $\det(\overline{M}_\pm)=a^7 (a+1)^9 \sigma^{\mp 31} (\sigma^{\pm 5}+1)^3f_2(\sigma^{\pm 1}) f_3(\sigma^{\pm 1})\neq 0$: 
it follows that $M_\pm$ and $\overline{M}_\pm$ 
are invertible matrices.

Now, we can argue as in the proof of Lemma \ref{G72}: this time,
the similarity invariants of $\tau_{|V}$ and $(\tau_{|V})^{-1}$ are
$p_1(t)=\ldots=p_4(t) = t + 1$ and
$p_5(t)= t^3 + (a^3+a+1)^4 t^2 + (a^3+a+1)^4 t + 1$.
\end{proof}

We now consider the case $p$ odd. 
The restriction of $\tau$ to $V$ is a transvection of
center $u= e_5 -2a^{-1} e_8 + e_9$ such that
$$
\left\{\begin{array}{rclcrcl}
\tau e_j & =& e_j + a^2 u, \quad j \in \{2,3,4\},& \quad &
\tau e_j & =& e_j - a^2 u, \quad j \in \{6,7\},\\
\tau e_j & = & e_j, \quad j \in \{1,5,8,9 \}.
\end{array}\right. 
$$
The generators of $K$ are bireflections, being conjugates of $\tau$, 
and their restrictions to $S_7$ are transvections.
The center of $(\tau_{|S_7})^T$ is $\overline{u}= e_3+e_4-e_6-e_7$.

\begin{lemma}\label{K9odd}
Assume $p$ odd. Suppose that  $a\in \F_q^\ast$ is such that:
\begin{itemize}
\item[\rm{(a)}] $\F_p[a^3(a+2)]=\F_q$;
\item[\rm{(b)}] $(a+2)(a^3-4)\neq 0$;
\item[\rm{(c)}] $( a^3-a+2) (a^4-a^3+6 a^2+6 a-4)\neq 0$.
\end{itemize}
Then $K_{|S_7}=\SL_7(q)$.
\end{lemma}

\begin{proof}
By condition (b) the images of $u$ under the generators of $K$ are linearly independent. Hence they generate $S_7$.
By conditions (b) and (c) also the images of $\overline{u}$ under the generators of $(K_{|S_7})^T$
are linearly independent.

Set $u_1=u$, $u_2=y^{-1}u$ and $u_3=yu$. The group $G$ in \eqref{S9} fixes $U_3$ and, with respect to the
basis $\left\{-2a^2u_1, u_3, u_1+ \frac{a+2}{2a} u_2+ \frac{(a+2)^2}{4a^2} u_3 \right\}$, its generators $\tau, \tau^y, \tau^{y^2}$ act as
$$\begin{pmatrix}
1 & 1 & 0\\
0 & 1 & 0\\
0 & 0 & 1
\end{pmatrix},\ \begin{pmatrix}
1+\frac{4a^3}{a + 2}  &     1                 &  \frac{-\beta}{4a^2(a+2)} \\
-2 a^3 (a+2)          &  1-\frac{(a+2)^2}{2}  &  \frac{(a+2)\beta}{8a^2} \\
 \frac{8 a^5}{a+2}    &    2a^2               &   1-\frac{\beta}{2(a+2)}
\end{pmatrix}, \ \begin{pmatrix}
1 & 0 &  0 \\
-2a^3(a+2) & 1 & 0 \\
0 & 0 & 1
\end{pmatrix},$$
where $\beta=(a-2)(7a^2+8a+4)$.
For $q\neq 9$, by Dickson's Lemma and  assumptions (a) and (b), the group $\left\langle \tau , \tau^{y^2}\right\rangle$
induces  $\SL_2(q)$ on $U_2=\langle u_1,u_3\rangle$.
Similarly, the group $\left\langle (\tau_{|S_7})^T, ((\tau^{y})_{|S_7})^T\right\rangle$
fixes $\overline{U}_2=\langle \overline{u}, 
\frac{-1}{2a^3(a+2)}\overline{u}_2  \rangle$,
where $\overline{u}_2= ((\tau^y)_{|S_7})^T \overline{u} -\overline{u}$,
acting as $\left\langle\begin{pmatrix}    1 & 1 \\    0 & 1\end{pmatrix}, 
\begin{pmatrix} 1 & 0 \\ -2a^3(a+2) & 1 \end{pmatrix}\right\rangle$ and hence inducing $\SL_2(q)$.
For $q=9$, take $\overline{U}_3=\langle \overline{U}_2, \overline{u}_3\rangle$,
where $\overline{u}_3=\frac{1}{a^2}\overline{u} + \frac{a+2}{a^5} ((\tau^y)_{|S_7})^T \overline{u}
-\frac{a+2}{a^5} ((\tau^{y^2})_{|S_7} )^T \overline{u}$: direct calculations give that both the restrictions
$G_{|U_3}$ and $((G_{|V})^T)_{|\overline{U}_3}$ are isomorphic to $\SL_3(9)$.
Set $G_2=\langle \tau, \tau^{y^2}\rangle$ when $q\neq 9$, and call $G_2$ the stabilizer of $U_2$ and
$u_1+ \frac{a+2}{2a} u_2+ \frac{(a+2)^2}{4a^2} u_3$ in $G$ when $q=9$.
Proceeding as done in the proof of Lemma \ref{K9} we conclude that $K_{|S_7}=\SL_7(q)$.
\end{proof}

\begin{lemma}\label{9ex}
If $q$ is odd, there exists $a \in \F_q^*$ satisfying all the hypotheses of {\rm Lemma \ref{K9odd}}.
\end{lemma}

\begin{proof}
If $q=p$, we have to exclude at most $11$ values of $a$.
So, if $p\geq 13$ we are done; 
if $p \in \{3,7,11\}$, take $a=-1$; if $p=5$, then take $a=1$.
Suppose now $q=p^f$, with $f\geq 2$, and let $\N(q)$ be the number of elements $b\in \F_q^*$ 
such that $\F_p[b^4+2b^3]\neq \F_q$. We have to check when $p^f-1-\N(q) > 10$,
as the conditions on $a$ of shape $a\neq c$ can be dropped.
By Lemma \ref{campoN}, it suffices to show that
$p^f - 4p\frac{ p^{\left \lfloor f/2\right \rfloor} -1}{p-1} > 11 $.
This condition is fulfilled, unless $q=3^2,5^2$.
If $q\in \{9,25\}$ take $a\in \F_q$ whose minimal polynomial over $\F_p$ is $t^2+7$.
\end{proof}

\begin{proposition}\label{main9}
Suppose $q>2$. Then there exists $a \in \F_q^*$ such that $H=\Sp_{18}(q)$. 
\end{proposition}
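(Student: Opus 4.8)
The plan is to reduce the statement to Proposition \ref{wSLell}. By Lemma \ref{9ex} there is, for every even $q\ge 16$ and every odd $q\ge 9$, an element $a\in\F_q^*$ satisfying the hypotheses of Lemma \ref{K9even} when $p=2$ and of Lemma \ref{K9odd} when $p>2$ (and, in the odd case, the extra genericity condition ensuring Lemma \ref{irr9-odd} applies). Fixing such an $a$, the subgroup $K$ of this section lies in $\wSL_9(q)$ and, by Lemma \ref{K9even} or \ref{K9odd}, induces all of $\SL_9(q)$ on $V$. Since an element of $\wSL_9(q)$ is determined by its action on $V$ (it acts as $A^{-T}$ on $JV$), restriction to $V$ embeds $\wSL_9(q)$ isomorphically onto $\SL_9(q)$; hence $K_{|V}=\SL_9(q)$ together with $K\le\wSL_9(q)$ forces $K=\wSL_9(q)$, so $\wSL_9(q)\le H$. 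As $n=9\ge 6$, Proposition \ref{wSLell} applies with $\ell=9\ge 4$ and yields $H=\Sp_{18}(q)$ at once.

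Should one instead wish to argue as for $n=7,8$ (thereby using Lemmas \ref{irr9-2}, \ref{irr9-odd}, \ref{subfield9-2}, \ref{subfield9-odd}), the same $a$ makes $H$ absolutely irreducible with $|H|$ divisible by $|\SL_9(q)|$. Assuming $H\le M$ with $M$ maximal in $\Sp_{18}(q)$: the subfield lemmas exclude $M\in\Cl_5$; the bound $|\SL_9(q)|$ exceeds the order of every $\ClS$ subgroup of $\Sp_{18}(q)$ and of the geometric subgroups of type $\Sp_2(q)\wr\Sym_9$ in $\Cl_2$, of $\Sp_2(q)\otimes\Or_9(q)$ in $\Cl_4$, and of the $\Cl_3$-subgroup $\Sp_6(q^3).3$, so all of these are removed. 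For every $q$ the imprimitive subgroup of type $\GL_9(q).2$ is killed by a trace argument: $y$, of order $3$, fixes both totally isotropic $9$-spaces while $x$ must interchange them by irreducibility, forcing $\tr(xy)=0$ against a direct computation. For even $q$ this leaves only $\Cl_8$, excluded by checking that the constraints forced on a putative $H$-invariant quadratic form $Q$ by $x$- and $y$-invariance are inconsistent, i.e.\ give $1=Q(e_1)=\dots=0$.

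The fields left uncovered are $q\in\{3,5,7\}$ (odd) and $q\in\{4,8\}$ (even); recall that $q=2$ is already settled by Proposition \ref{q=2} and that $q>2$ is assumed throughout. For each of these I would fix a convenient $a$ (say $a=1$, or one with a prescribed minimal polynomial) and exhibit an index set $L_q$ with $\bigcup_{k\in L_q}\varpi\big((xy)^k y\big)=\varpi(\Sp_{18}(q))$, then invoke Lemma \ref{LPS}(2). Crucially, $n=9$ is odd, so the exceptional alternative $\Omega_{2n}^-(q)\unlhd M$ of Lemma \ref{LPS}(2) cannot occur, and one concludes $H=\Sp_{18}(q)$ immediately, with no quadratic-form side computation needed.

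The genuine difficulty lies not in the proposition itself but upstream, in securing a single $a$ that meets every requirement simultaneously: the field-generation condition $\F_q=\F_p[\,\cdot\,]$ needed both for Dickson's lemma (to produce $\SL_2(q)$) and for the subfield exclusion, together with the long lists of polynomial non-vanishing conditions in Lemmas \ref{K9even} and \ref{K9odd}. Lemma \ref{9ex} settles this by the counting estimate of Lemma \ref{campoN}, but only down to the short list of small $q$ above, for which no generic $a$ survives. Producing workable index sets $L_q$ for those residual fields is the part that genuinely requires explicit machine computation, and I expect it to be the main obstacle.
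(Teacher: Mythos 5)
Your primary route breaks at its decisive step. What this section actually establishes is $\tau=[x,y]^{12}\in N\rtimes\wSL_9(q)$ and, from Lemmas \ref{K9even} and \ref{K9odd}, only the equality of restrictions $K_{|V}=\SL_9(q)$ — exactly as in the structurally identical cases $n=7$ and $n=11$, where the containment is stated as $K\le N\rtimes\wSL_7(q)$, resp.\ $N\rtimes\wSL_{11}(q)$. The sentence you quote, asserting $K\le\wSL_9(q)$, must be read with the $N\rtimes$ prefix (a slip): had the containment in the block-diagonal complement been available, the lemmas would have been stated as ``$K=\wSL_9(q)$'', as is done in the genuinely block-diagonal situations (Lemmas \ref{G9-10}, \ref{G9-12}, \ref{G9-14}), and the proposition would follow from Proposition \ref{wSLell} in one line — in which case Lemmas \ref{irr9-2}, \ref{irr9-odd}, \ref{subfield9-2}, \ref{subfield9-odd} and the entire maximal-subgroup analysis would be pointless. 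Since restriction to $V$ on $N\rtimes\wSL_9(q)$ has kernel $N$, the equality $K_{|V}=\SL_9(q)$ only says $KN/N\cong\SL_9(q)$: the group $K$ could be a complement to $N$ other than $\wSL_9(q)$, or could meet $N$ nontrivially, and in neither case does $\wSL_9(q)\le H$ follow. Upgrading ``restriction equals $\SL_9(q)$'' to containment of an explicit conjugate of $\wSL_9(q)$, plus recovering $N$ and $N^T$, is precisely the nontrivial extra work of Section \ref{sec4} (the conjugating matrix $\widehat P$, the root subgroups $R_i$, Theorem \ref{WSL6}); nothing of the sort is carried out for $n=9$, so Proposition \ref{wSLell} cannot be invoked here, and your ``at once'' conclusion has a genuine gap.

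Your fallback route, by contrast, is the paper's actual proof and is sound: absolute irreducibility (Lemmas \ref{irr9-2}, \ref{irr9-odd}, whose hypotheses for $p>2$ indeed follow from Lemma \ref{K9odd}(d) plus the extra condition $(a^2-1)(a^3+a^2+1)\neq 0$ secured by Lemma \ref{9ex}), divisibility of $|H|$ by $|\SL_9(q)|$, exclusion of $\Cl_5$ via the subfield lemmas, of $\ClS$ and the wreath/tensor types by order bounds, of $\GL_9(q).2$ by the trace argument you describe ($y$ fixes both totally isotropic summands, $x$ swaps them, forcing $\tr(xy)=0$ against $\tr(xy)=a\neq 0$), and of $\Cl_8$ for $q$ even by the inconsistent constraints on $Q$; your observation that the exceptional branch of Lemma \ref{LPS}(2) is vacuous because $n=9$ is odd is correct and matches the paper. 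To make this a complete proof you must supply the data you defer: the paper takes $a=1$ for $q=3,5,7$ and $a$ with minimal polynomial $t^f+t+1$ for $q=2^f$, $f=2,3$, with explicit lists $L_q$; note that for $q=3$ the words $(xy)^ky$ do not suffice and the paper instead uses $([x,y]xy)^kyx$, so the choice of test words is itself part of the computation.
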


\begin{proof}
Suppose $p=2$. If $q\not \in \{8,64\}$, take $a=\sigma+\sigma^{-1}$, where $\sigma\in \F_{q^2}^*$ has order $q+1$;
if $q=2^f$ with $f \in \{3,6\}$, take $a \in \F_q^*$ whose minimal polynomial over $\F_2$ is $t^f+t+1$.
If $p$ is odd, take $a \in \F_q$ that satisfies the hypotheses of Lemma \ref{K9odd}
 and whose existence follows from Lemma \ref{9ex}.
As already observed, $K$ fixes $S_7$ and induces the identity on $\frac{W_7}{S_7}$.
By Lemmas \ref{K9even} and \ref{K9odd} it also induces $\SL_7(q)$ on $S_7$, so it is a subgroup $\K_7$.
By Theorem \ref{wSLell} it follows that $H=\Sp_{18}(q)$.
\end{proof}

\subsection{Case $n=11$ and $q>2$}

The element $\tau=[x,y]^{16}$ belongs to $N \rtimes \wSL_{11}(q)$ and its characteristic polynomial
is 
$$\chi_\tau(t)=\left\{\begin{array}{cl}
 ( t + 1)^{18} (t^2 +  a^{16} t + 1)^2  & \textrm{ if } p=2,\\
(t-1)^{22} & \textrm{ if } p>2.
\end{array}\right.$$
The subgroup
$$K=\left\{\begin{array}{ll}
\left\langle  \tau^{y^i}, \tau^{y^jx }  \mid i,j \in \{0,\pm 1\}  \right\rangle & \textrm{if } p=2,\\[5pt]
\left\langle  \tau^{y^i}, \tau^{yxy^j} \tau^{y[x,y]^6 y^k } \mid i,j,k \in \{0,\pm 1\}  \right\rangle & \textrm{if } p>2
\end{array}\right.
$$
fixes $V$, and so it is contained in $N\rtimes \wSL_{11}(q)$.

We start with the case $p=2$. 
As done in Section \ref{sec4}, if $q\neq 8$ we take $\sigma\in \F_{q^2}^*$ of order 
$q+1$ and define $a=\sigma+\sigma^{-1}$. In this way, $\F_2[a^3+a]=\F_q$.
It will be useful to observe that the following polynomials $f_i(t)\in \F_2[t]$ do not have $\sigma$ as root:
$$f_1(t)=t^5+t^3+t^2+t+1, \quad f_2(t)= t^5+t^4+t^2+t+1,\quad
f_3(t)=t^7+t+1,$$
$$f_4(t) =t^7+t^6+1,\quad f_5(t)=t^{11}+t^7+t^6+t^5+t^4+t^2+1,$$
$$f_6(t)=  t^{12}+ t^6+ t^3+ t^2+1,\quad  f_7(t)=t^{12}+t^{11}+t^8+t^5+t^4+t^2+1,$$ 
$$\quad f_8(t)=t^{18}+t^{17}+t^{15}+t^{12}+t^9+t^7+t^6+t^4+1.$$
The same holds also for $f_9(t)=t^{12}+t^{10}+t^7+t^6+t^5+t^2+1$, unless $q=64$.
So, for this value of $q$, we require that $\sigma$ has minimal polynomial $t^{12}+t^8+t^7+t^6+t^5+t^4+1$ over $\F_2$
Next, we observe that $(\sigma^5+1)(\sigma^7+1)\neq 0$ unless $q=4$.
Finally, if $q=8$ take $a \in \F_8^*$ whose minimal polynomial over $\F_2$ is $t^3+t^2+1$.

 Direct computation shows that  $K$ fixes $S_7$ and induces the identity on $\frac{W_7}{S_7}$ (so it satisfies
condition $(\mathrm{b}_7)$ of Definition \ref{Kell}). We now prove that $K$ induces $\SL_7(q)$ on $S_7$.

\begin{lemma}\label{G11}
Assume $p=2 < q$ and take $a\in \F_q^*$ as previously described.
Then, the restriction of $K$ to $S_7$ is the group $\SL_{7}(q)$.
\end{lemma}

\begin{proof} 
If $q\in \{4,8\}$, let $g_1,g_2,g_3,g_4$ be the restrictions to $S_7$ of $\tau^y, \tau^{yx}, \tau, \tau^{y^2}$, respectively.
Take  $L_4=\{1,2,3,4,8\}$ and $L_8=\{ 1,2,9,34\}$: then \eqref{pi7} holds and by Lemma \ref{LPS} we obtain $K_{|S_7}=\SL_7(q)$.
So, assume $q\neq 4,8$.

We first prove that the group $K_{|S_7}$ is absolutely irreducible.
The characteristic polynomial of $\tau_{|S_7}$ is 
$(t + 1)^5 (t^2 + a^{16} t + 1)$  and  $\tau_{|S_7}$ has a fixed point space of dimension $5$.
Let $\rho=\sigma^{16}$. Then $1\neq \rho\neq \rho^{-1}$. 
The  eigenspaces of $\tau_{|S_7}$ and $(\tau_{|S_7})^T$ relative to the eigenvalue $\rho^{\pm 1}$ are generated, respectively, by
the vectors $s_{\rho^{\pm 1}}$ as in \eqref{sp}  and 
$$\overline s_{\rho^{\pm 1}}=e_5+ \sigma^{\mp 1} e_6 + 
\frac{a^3\sigma^{\mp 1} (\sigma^{\pm 1}+1) }{a+1} (e_7+e_{10})+\sigma^{\mp 4} e_8 + \sigma^{\pm 3} e_9
+\frac{a^3\sigma^{\mp 2} (\sigma^{\pm 1}+1)}{a+1} e_{11}.$$
Observe that $\langle x s_{\rho^{\pm 1}}\rangle=\langle s_{\rho^{\mp 1}}\rangle$ and
$\langle x^T \overline s_{\rho^{\pm 1}} \rangle=\langle \overline s_{\rho^{\mp 1}}\rangle$.
Take the  square matrices $M_\pm\in \Mat_7(\F)$ whose columns are the restrictions to $S_7$  of the following vectors:
$$M_\pm : \quad  s_{\rho^{\pm 1}},\;   \tau^y s_{\rho^{\pm 1}}, \;   \tau^{y^2} s_{\rho^{\pm 1}},\; 
 \tau^{yx} s_{\rho^{\pm 1}} ,\;  \tau^{y^2x} s_{\rho^{\pm 1}} ,\;   s_{\rho^{\mp 1}}, \;
 \tau^{y}  s_{\rho^{\mp 1}}.$$
We obtain that $\det(M_\pm)=\sigma^{\mp 23 }  a^{41} (a+1)^3 (\sigma^{\pm 1}+1)^2   (\sigma^{\pm 5}+1) 
(\sigma^{\pm 7}+1)^2 
 f_1(\sigma^{\pm 1}) f_2(\sigma^{\pm1})$ $f_6(\sigma^{\pm 1})\neq 0$.
Let now $\overline{M}_\pm$ be the matrix  whose columns are the restrictions to $S_7$  of the following vectors: 
$$\overline{M}_\pm : \quad  \overline s_{\rho^{\pm 1}},\;   (\tau^y)^T  \overline s_{\rho^{\pm 1}}, \;   
(\tau^{y^2})^T \overline s_{\rho^{\pm 1}},\;  (\tau^{yx})^T \overline s_{\rho^{\pm 1}} ,\; 
 (\tau^{y^2x})^T \overline s_{\rho^{\pm 1}} ,\; 
\overline s_{\rho^{\mp 1}}, \; (\tau^{y})^T  \overline s_{\rho^{\mp 1}}.$$
We have $\det(\overline{M}_\pm)= \frac{a^{58}}{(a+1)^7} \sigma^{\mp 36} (\sigma^{\pm 1}+1)  (\sigma^{\pm 5}+1)
f_3(\sigma^{\pm 1})f_4(\sigma^{\pm 1}) f_5(\sigma^{\pm 1})f_7(\sigma^{\pm 1})f_8(\sigma^{\pm 1})$ $f_9(\sigma^{\pm 1})\neq 0$: 
it follows that $M_\pm$ and $\overline{M}_\pm$ are invertible matrices.

Now,  we can argue as in the proof of Lemma \ref{G72}: this time,
the similarity invariants of $\tau_{|V}$ and $(\tau_{|V})^{-1}$ are
$p_1(t)=\ldots=p_4(t) = t + 1$ and
$p_5(t)=t^3 + (a+1)^{16}  t^2 + (a+1)^{16} t + 1$.
\end{proof}

We now consider the case $p$ odd. The restriction of $\tau$ to $V$ is a transvection of
center $u=e_7-e_{11}$ such that
$$
\left\{\begin{array}{rclcrcl}
\tau e_{10}  & =& e_{10} + 8a u, &\quad &
\tau e_j  & = & e_j+ 4a^2 u,\quad j \in \{1,2,5,9\},\\
\tau e_j & = & e_j, \quad j \in \{ 7,11\},&&
\tau e_j & = & e_j- 4a^2 u, \quad j \in \{ 3,4,6,8\}.
\end{array}\right. 
$$
A direct computation shows that $K$ fixes the subspace $S_9$, acting as the identity on
$\frac{W_9}{S_9}$. The generators of $K$ are bireflections, being conjugates of $\tau$, 
and their restrictions to $S_9$ are transvections.
The center of $(\tau_{|S_9})^T$ is $\overline u  = a(-e_3-e_4+ e_5- e_6- e_8+ e_9)+2e_{10}$.

\begin{lemma}\label{Gn11} 
Assume $p$ odd. Suppose that  $a\in \F_q^\ast$ satisfies:
\begin{itemize}
\item[\rm{(a)}] $\F_q=\F_p[a^3(a+2)]$;
\item[\rm{(b)}] $(a+1)(a+2) (a^2+3a+3) (2a^2+6a-1)(6a^2 + 1)\neq 0$;
\item[\rm{(c)}] $(3a+4)(10a^5+10a^4-7a^3+13a^2+8a-8)
(37a^{10}-10a^9+260a^8+9a^7+7a^6+277a^5-91a^4-70a^3+84a^2-56a+16)\neq 0$.    
\end{itemize}
Then $K_{|S_9}$ is the group $\SL_{9}(q)$.
\end{lemma}

\begin{proof}
By condition (b) the images of $u$ under the generators of $K$ are linearly independent. Hence they generate $S_9$.
By conditions (b) and (c), the set $\Lambda \cup \{((\tau^{y[x,y]^6y})_{|S_9})^T((\tau^{y})_{|S_9})^T \overline{u} \}$, where 
$\Lambda$ consists of the images of $\overline{u}$ under the generators $g\neq ((\tau^{y[x,y]^6y})_{|S_9})^T$ of $(K_{|S_9})^T$,
is linearly independent.

Set $u_1=u$, $u_2=y^{-1}u$ and $u_3=yu$. The group $G$ in \eqref{S9} fixes $U_3$ and, with respect to the
basis $\left\{ u_1, -4 a (a+2) u_3,  au_1-\frac{a+2}{2} u_2  + \frac{(a+2)^2}{4a} u_3  \right\}$, 
its generators $\tau, \tau^y, \tau^{y^2}$ act as
$$\begin{pmatrix}
1 & 32a^3(a+2) &0\\
0 & 1& 0\\
0 & 0& 1
\end{pmatrix},\ \begin{pmatrix}
1-\frac{16a^3}{a+2} &  32a^3(a+2) &  \frac{-2a\beta}{a+2}\\
1   &  1-2(a+2)^2 &  \frac{\beta}{8a^2}\\
\frac{16a^2}{a+2} & -32a^2( a+2) & 1+\frac{2\beta}{a+2}
\end{pmatrix},\ 
\begin{pmatrix}
1 & 0 & 0 \\
1 & 1 & 0 \\
0 & 0 & 1
\end{pmatrix},$$
where $\beta=(3a+2)(3a^2+4)$.
For $q\neq 9$, by Dickson's Lemma and  assumptions (a) and (b), the group $\left\langle \tau , \tau^{y^2}\right\rangle$
induces  $\SL_2(q)$ on $U_2=\langle u_1,u_3\rangle$.
Similarly, the group $\left\langle (\tau_{|S_9})^T, ((\tau^{y})_{|S_9})^T\right\rangle$
fixes $\overline{U}_2=\langle \overline{u}, \overline{u}_2  \rangle$,
where $\overline{u}_2= ((\tau^y)_{|S_9})^T \overline{u} -\overline{u}$,
acting as $\left\langle\begin{pmatrix}    1 & 32a^3(a+2)  \\    0 & 1\end{pmatrix}, 
\begin{pmatrix} 1 & 0 \\ 1 & 1 \end{pmatrix}\right\rangle$ and hence inducing $\SL_2(q)$.
For $q=9$, take $\overline{U}_3=\langle \overline{U}_2, \overline{u}_3\rangle$,
where $\overline{u}_3=
 a\overline u -\frac{a+2}{a^2}  ((\tau^y)_{|S_9})^T \overline u +\frac{a+2}{a^2} ((\tau^{y^2})_{|S_9})^T
 \overline u$: direct calculations give that both the restrictions
$G_{|U_3}$ and $((G_{|V})^T)_{|\overline{U}_3}$ are isomorphic to $\SL_3(9)$.
Set $G_2=\langle \tau, \tau^{y^2}\rangle$ when $q\neq 9$, and call $G_2$ the stabilizer of $U_2$ and
$ au_1+(a+2)  u_2  + \frac{(a+2)^2}{a} u_3 $ in $G$ when $q=9$.
Proceeding as done in the proof of Lemma \ref{K9} we conclude that $K_{|S_9}=\SL_9(q)$.
\end{proof}

\begin{lemma}\label{11ex}
If $q \geq 5$ is odd, there exists $a \in \F_q^*$ satisfying all the hypotheses of {\rm Lemma~\ref{Gn11}}.
\end{lemma}

\begin{proof}
If $q=p\geq 5$, we have to exclude at most $24$ values of $a$.
So, if $p\geq 29$ we are done; 
if $13\leq p \leq 23$, take $a=-3$;
if $p=7$, take $a=2$;
if $p\in \{5,11\}$, take $a=1$.
Suppose now $q=p^f$, with $f\geq 2$, and let $\N(q)$ be the number of elements $b\in \F_q^*$ 
such that $\F_p[b^4+2b^3]\neq \F_q$. We have to check when $p^f-1-\N(q) > 21$,
as the conditions on $a$ of shape $a\neq c$ can be dropped.
By Lemma \ref{campoN}, we have
$\N(q)\leq 4 p\frac{ p^{\left \lfloor f/2\right \rfloor} -1}{p-1}$
and hence it suffices to show that
$p^f - 4p\frac{ p^{\left \lfloor f/2\right \rfloor} -1}{p-1} > 22$.
This condition is fulfilled, unless $q=3^2,3^3, 5^2, 7^2$.
If $q=9$, take  $a\in \F_9$ whose minimal polynomial over $\F_3$ is $t^2+t-1$.
If $q=25,49$ take $a\in \F_q$ whose minimal polynomial over $\F_p$ is $t^2+2$.
If $q=27$, take $a\in \F_{27}$ whose minimal polynomial over $\F_3$ is $t^3-t+1$.
\end{proof}

\begin{proposition}\label{main11}
Suppose $q>2$. Then there exists $a \in \F_q^*$ such that $H=\Sp_{22}(q)$. 
\end{proposition}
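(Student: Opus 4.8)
The plan is to follow the scheme already used for $n=7$ and $n=9$ (Propositions \ref{main7} and \ref{main9}): treat the fields supplied by the existence lemma via a maximal–subgroup analysis, and dispose of the few remaining small fields by a direct prime–spectrum computation with Lemma \ref{LPS}. First I would handle the fields covered by Lemma \ref{11ex}, namely $q\geq 8$ even and $q\geq 7$ odd. For such $q$ I would fix $a\in\F_q^*$ as produced by Lemma \ref{11ex}: in characteristic $2$ this $a$ satisfies the hypotheses of Lemma \ref{G11}, and in odd characteristic those of Lemma \ref{Gn11} together with $2a^2-a+2\neq 0$. These conditions already force the irreducibility hypothesis of Lemma \ref{Irr11non3}, since the factor $(a+1)(a^3+a^2+1)$ occurs in condition~(b) of Lemma \ref{G11}, while $(a+1)$ appears in condition~(c) of Lemma \ref{Gn11} and $2a^2-a+2\neq 0$ is the extra requirement of Lemma \ref{11ex}. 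Hence $H$ is absolutely irreducible; and because $K\leq N\rtimes\wSL_{11}(q)$ restricts onto $\SL_{11}(q)$ on $V$ by Lemmas \ref{G11} and \ref{Gn11}, the order $|\SL_{11}(q)|$ divides $|H|$ (note that the $N$–component prevents a direct appeal to $\wSL_{11}(q)\leq H$, so I argue through orders).

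Next I would assume, for contradiction, that $H$ lies in a maximal subgroup $M$ of $\Sp_{22}(q)$ and eliminate the geometric and almost–simple classes in turn. Class $\Cl_5$ is excluded by Lemma \ref{subfield11}. For class $\ClS$ I would compare orders: \cite[Theorem 5.2.4]{KL} yields $|M|\leq q^{66}$, whereas $|H|\geq |\SL_{11}(q)|>q^{119}$ by \cite[Corollary 4.3]{AB}. The same comparison rules out the $\Cl_2$–subgroup of type $\Sp_2(q)\wr\Sym(11)$ and, for $q$ odd, the $\Cl_4$–subgroup of type $\Sp_2(q)\otimes\Or_{11}(q)$. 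Invoking \cite[Table 3.5.C]{KL} for an absolutely irreducible $M$ containing $\SL_{11}(q)$, the only surviving candidates are a subgroup of class $\Cl_8$ when $p=2$ and a subgroup of type $\GL_{11}(q).2$ in $\Cl_2$ when $p>2$.

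Both survivors would be ruled out exactly as in the lower–dimensional cases. If $p>2$ and $M$ is of type $\GL_{11}(q).2$, then $H$ preserves a decomposition $V=V_1\oplus V_2$ with $\dim V_1=\dim V_2=11$; by irreducibility the action on $\{V_1,V_2\}$ is transitive, so, as $y$ has order $3$ it must fix each block while $x$ interchanges them, whence $\tr(xy)=0$, against $\tr(xy)=a\neq 0$. If $p=2$ and $M\in\Cl_8$, then $H$ preserves a quadratic form $Q$ whose polar form has Gram matrix $J$; imposing invariance of $Q$ under $y$ and then under $x$ propagates equalities among the values $Q(e_i)$ until one reaches a relation $1=Q(e_i)=0$. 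This gives $H=\Sp_{22}(q)$ for every $q$ handled by Lemma \ref{11ex}.

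Finally I would settle the fields $q\in\{3,4,5\}$ not covered above by a direct calculation: for each such $q$ I would choose a concrete $a\in\F_q^*$ and a finite set $L_q$ of exponents with $\bigcup_{k\in L_q}\varpi\bigl((xy)^k y\bigr)=\varpi(\Sp_{22}(q))$, so that $\varpi(H)=\varpi(\Sp_{22}(q))$. Here the conclusion is immediate: since $n=11$ is odd, the alternative $\Omega_{22}^-(q)\unlhd H$ of Lemma \ref{LPS}(2) cannot occur, so Lemma \ref{LPS} gives $H=\Sp_{22}(q)$. The substance of the argument rests in the already–established Lemmas \ref{Irr11non3}, \ref{G11} and \ref{Gn11}; within this proposition the delicate points I expect to be the main obstacle are the geometric reduction through \cite[Table 3.5.C]{KL} combined with the order estimates, the characteristic–$2$ quadratic–form computation, and the search for exponent sets $L_q$ realising all the relevant primes for $q\in\{3,4,5\}$ — a finite but genuinely nontrivial verification.
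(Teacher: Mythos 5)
Your proposal is correct and follows essentially the same route as the paper's proof: the same choice of $a$ via Lemma \ref{11ex} feeding Lemmas \ref{Irr11non3}, \ref{G11} and \ref{Gn11}, the same elimination of $\Cl_5$ by Lemma \ref{subfield11}, of $\ClS$, $\Sp_2(q)\wr\Sym(11)$ and $\Sp_2(q)\otimes\Or_{11}(q)$ by order comparisons, of $\GL_{11}(q).2$ by the trace argument $a=\tr(xy)\neq 0$, and of $\Cl_8$ by the quadratic-form relations, followed by the same appeal to Lemma \ref{LPS} for $q\in\{3,4,5\}$ (where your observation that $n=11$ odd excludes the $\Omega_{22}^-(q)$ alternative is exactly right). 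The only pieces you defer---the explicit chains $Q(e_1)=Q(e_4)=\cdots=Q(e_9)$ in characteristic $2$ and the concrete values of $a$ and exponent sets $L_3,L_4,L_5$---are precisely the finite verifications the paper carries out.
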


\begin{proof}
Suppose first $p=2$. If $q \not\in \{8,64\}$, take $a=\sigma+\sigma^{-1}$, where $\sigma\in \F_{q^2}^*$ has order $q+1$.
If $q=8,64$ take $a \in \F_q$ whose minimal polynomial over $\F_2$ is, respectively, $t^3+t^2+1$ and $t^6+t+1$.
As we already observed, $K$ fixes $S_7$ and induces the identity on $\frac{W_7}{S_7}$.
By Lemmas \ref{G11}  it  induces $\SL_7(q)$ on $S_7$, so it is a subgroup $\K_7$.
By Theorem \ref{wSLell} it follows that $H=\Sp_{22}(q)$.

Suppose $p$ is odd. If $q\geq 5$, take $a \in \F_q^*$ that satisfies the hypotheses of Lemma \ref{Gn11}
and whose existence follows from Lemma \ref{11ex}.
Then, $K$ fixes $S_9$ inducing $\SL_9(q)$.
Since $K$ acts as the identity on $\frac{W_9}{S_9}$, we obtain that 
$H$ contains a subgroup $\K_9$ and the statement follows again from Theorem \ref{wSLell}.

Finally, take $q=3$, $a=1$ and
$L=\{1,4,6,8,10,11,12, 16, 20, 28, 35\}$. Then 
$$\bigcup_{k \in L} \pi( (xy)^k y )  =\pi(\Sp_{22}(3)).$$
By Lemma \ref{LPS} we conclude that  $H=\Sp_{22}(3)$.
\end{proof}

\section{Case $n=6$ and $q>2$}\label{caso n=6}

\begin{lemma}\label{q4}
Let $q=4$ and $x,y$ be as in  {\rm Section \ref{sec2}}.
Then $H=\langle x,y\rangle$ is $\Sp_{12}(4)$.
\end{lemma}

\begin{proof}
We have
$$\bigcup_{k \in L} \pi( (xy)^k y )=\pi(\Sp_{12}(4)),\quad \textrm{where }  L= \{2,3,5,6,23,28\}.$$
By Lemma \ref{LPS} we conclude that either $H=\Sp_{12}(4)$ or 
$\Omega_{12}^-(4) \unlhd H$.
The second case can be excluded arguing as in Proposition \ref{q=2} for $n=6$.
Thus, $H=\Sp_{12}(4)$.
\end{proof}

Computational evidences suggest that the elements $x,y$ described in Section \ref{sec2}
work also for $n=6$. However, for  $q\neq 2,4$, we prefer to take different generators for a simpler proof.
So, define $x,y$ of respective order $2$ and $3$, as 
follows:
\begin{itemize}
\item $x$ swaps $e_{\pm 1}$ with $e_{\pm 2}$, and  swaps $e_{\pm 3}$ with $e_{\pm 4}$;
\item $x$ acts on $\langle e_5,e_6\rangle$ with matrix $\gamma$ as in \eqref{matGamma} and acts on $\langle e_{-5},e_{-6} \rangle$ with matrix $\gamma^T$;
\item $y e_1=e_3$, $ye_{3}=-(e_1+e_3)$, $ye_{-1}=-e_{-1}+e_{-3}$ and $ye_{-3}=-e_{-1}$;
\item $y e_2=e_{-2}$ and $ye_{-2}=-(e_2+e_{-2})$;
\item  $y$ acts on $\langle e_{\pm 4}, e_{\pm 5}, e_{\pm 6}\rangle$ as the permutation
$\left(e_{\pm 4 },e_{\pm 5},e_{\pm 6}\right)$.
\end{itemize}
Since $x^T J x  = y ^T J y = J$, it follows that $H=\langle x,y\rangle$ is contained in $\Sp_{12}(q)$.

A direct calculation gives that the characteristic polynomial $\chi_{[x,y]}(t)$ of $[x,y]$ is:
$$(t+1)^4(t^4 - t^3 + t^2 - t + 1)^2.$$
Moreover, when $a\neq 0$, the eigenspace $\V_{-1}([x,y])$ 
is generated by
$w_1$ and $w_2=xw_1$,  where
$$w_1=a (e_1+e_{-3})- (e_5+ e_{-5})\equad w_2= a (e_2-e_6+e_{-4})+ e_5+ e_{-5}.$$

\begin{lemma}\label{irr6} 
Assume $q\neq 4$. Let $a\in \F_q^*$ such that $\F_p[a]=\F_q$; 
if $p=11$,  suppose also $a^2+1\neq 0$.
Then the group $H$ is absolutely irreducible.
\end{lemma}

\begin{proof} 
Let $U$ be a proper $H$-invariant subspace of $\V=\F^{12}$.
Suppose that $W=\V_{-1}([x,y])$ is contained in $U$.
Define  $g_1= y[x,y]xy$, $g_2=xg_1$ and $g_3=g_2y$.
For $i=1,2,3$, consider the subspaces $T_i=W + yW + yxy^2 W+  (xy)^2 y W + y[x,y] x y^2 W+ g_i W$.
We show that at least one of the subspaces $T_1,T_2,T_3\leq U$ has dimension $12$.
In fact, let $M_i$ be the matrix whose columns are the vectors $w_1,w_2$ and their images by
$y, yxy^2, (xy)^2 y, y[x,y] x y^2, g_i$.
Then $\det(M_1)=-4 a^8 (a^{10} - 2 a^8 - 3 a^6 - 32 a^4 + 103 a^2 - 24)$,
$\det(M_2)=2 a^8 (a^{10}+6a^8-11a^6-110 a^4+229 a^2-50)$
and $\det(M_3)=-a^{10}(4a^2-1)(a^2+a-1)^2(a^2-a-1)^2$.
Clearly, if $(4a^2-1)(a^2+a-1)(a^2-a-1)\neq 0$ we have done.
In particular, this is true if $p=2$ since we are assuming $q\neq 4$.
So, suppose $p\neq 2$.
If $a^2=1\pm a$,  then $\det(M_2)= 16 a^8 (2\mp a)$.
This means that $\det(M_2)\neq 0$ unless $a=\pm 2$. However, from $a^2=1 \pm a$ we get the contradiction $4=3$.
If $a=\pm \frac{1}{2}$, we get  $\det(M_1)= 2^{-16}\cdot 311$ and $\det(M_2)=2^{-17}\cdot 233$. 
So, these determinants cannot be both zero.

Then, by Lemma \ref{HT}, we may suppose that $\dim(U)\leq 6$ and, since $xw_1=w_2$, that
$\V_{-1}([x,y])\cap U=\langle u_\pm\rangle$, where $u_\pm=w_1\pm w_2$.
The matrix $M_\pm$ whose columns are the images of $u_\pm$ under 
$\I_{12}, y, y^2 , xy^2, yxy^2, (xy^2)^2, y (xy^2)^2$ has rank $7$, as we are going to show.
In fact, the submatrix of $M_+$ corresponding to the rows $1, 2, 3, 5, 7, 11, 12$
has determinant  $-a^7\neq 0$. 
Suppose now that $p$ is odd.
The submatrix of $M_-$ corresponding to the rows $2, 3, 6, 7, 9, 11, 12$
has determinant  $-a^3(a+2)^2(a^6-a^4-8a^2+16)$.
If $a=-2$, the submatrix of $M_-$ corresponding to the rows $1, 2, 3, 4, 5, 6, 10 $
has determinant $2^{12}\neq 0$. So, we may consider the case $a\neq -2$ and $a^6-a^4-8a^2+16=0$, which implies $a\neq -1, 2$.
The submatrix of $M_-$ corresponding to the rows $1,2,3,4,5,6,12$
has determinant $-2a(a+1)^2(a+2)^2(a-2)^3(a^2+1)$, which is nonzero unless $a^2=-1$.
In this case, it follows that $0=a^6-a^4-8a^2+16=2\cdot 11$ and so $p=11$, which is excluded by our initial
hypotheses.
\end{proof}

Note that if $p=11$ and $a\in \F_{121}$ is such that $a^2=-1$, the subspace
generated by the images of $w_1-w_2$ under $\I_{12}, y, y^2, xy^2, yxy^2, [x,y^2]$
is $H$-invariant.

The characteristic polynomial of $\tau=[x,y]^5$ is $(t+1)^{12}$ and the eigenspace
$\Vb_{-1}(\tau)$
has dimension $10$.
It follows that $\tau$ is a bireflection and $[x,y]^{5p}=-\I_{12}$.
We will need the traces of the following elements:
\begin{equation}\label{trace6}
\tr(y)=-3, \quad \tr(xy)=a,\quad \tr([x,y])=-2,\quad \tr([x,y]xy)=-a.
\end{equation}

\begin{lemma}\label{prim6} 
Suppose that $H$ is absolutely irreducible.
Then, $H$ is primitive and tensor indecomposable.
\end{lemma}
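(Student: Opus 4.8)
The plan is to mirror Lemma~\ref{tensor primitive}, proving primitivity and tensor indecomposability separately, with two instruments: the trace identities \eqref{trace6} and the bireflection $\tau=[x,y]^5$, whose decisive feature is $\dim\tilde V_{-1}(\tau)=10$, i.e.\ $\mathrm{rank}(\tau+\I_{12})=2$.

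For primitivity, suppose $H$ preserves $\tilde V=V_1\oplus\cdots\oplus V_\ell$ with $\ell>1$; absolute irreducibility makes the action on the blocks transitive, so $\ell\mid 12$, each block has dimension $d=12/\ell$, and $\ell\in\{2,3,4,6,12\}$. Write $\bar g$ for the permutation induced by $g\in H$. The device I would use is a rank estimate: if $\bar\tau$ has an orbit of length $k\ge 2$, then on the associated $kd$-dimensional invariant subspace $\tau$ is a block-companion matrix with monodromy $P\in\GL_d(\F)$ and characteristic polynomial $\det(t^k\I_d-P)$; forcing this to equal $(t+1)^{kd}$ requires $p\mid k$, and a short row reduction gives $\mathrm{rank}(\tau+\I)\ge(k-1)d$ on that orbit. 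Since the total rank is $2$, for $d\ge 3$ (that is, $\ell\le 4$) the element $\bar\tau$ has no nontrivial orbit, so $\bar\tau=\id$; then $[\bar x,\bar y]^5=\id$, and as $S_\ell$ contains no element of order $5$ for $\ell\le 4$, the permutations $\bar x,\bar y$ commute. A transitive abelian group being regular, the orders of $\bar x,\bar y$ force $\bar y=\id$ when $\ell=2,4$ and $\bar x=\id$ when $\ell=3$; hence either $\overline{xy}$ is fixed-point-free and $\tr(xy)=0\neq a$, or ($\ell=4$) no transitive system exists.

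There remain $\ell=6$ and $\ell=12$. For $\ell=6$ ($d=2$) I split on the cycle type of $\bar y$, which is $3^2$ or $3\cdot1^3$: in the latter transitivity forces $\bar x$ to match each $y$-fixed block into the $3$-cycle, so $\overline{xy}$ is fixed-point-free and $\tr(xy)=0\neq a$; in the former $\bar y$ fixes no block, giving $\tr(y)=0$ against $\tr(y)=-3$ unless $p=3$. For $\ell=12$ ($d=1$) the estimate shows $\bar\tau$ may only have orbits of $p$-power length, impossible for $p\ge 5$. Thus the surviving systems are $\ell=6$ with $p=3$ of type $3^2$, and $\ell=12$ with $p\in\{2,3\}$ --- exactly the small-characteristic residue in which $-3,-2$ vanish and $\tau$ genuinely permutes blocks. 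This is where I expect the real work: as in the $\ell=8$ step of Lemma~\ref{tensor primitive}, I would enumerate the few monomial and block patterns compatible with the rational form of $y$, using those traces in \eqref{trace6} that do not vanish in the characteristic at hand (for instance $\tr([x,y])=-2$ when $p=3$, and $\tr([x,y]xy)=-a$ when $p=2$), and check in each that the forced fixed space of $\tau$ contradicts $\dim\tilde V_{-1}(\tau)=10$, or that $\overline{xy}$ is fixed-point-free against $\tr(xy)=a$.

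For tensor indecomposability, since $12=2\cdot 6=3\cdot 4$ it suffices to exclude $H\le\GL_2(\F)\otimes\GL_6(\F)$ and $H\le\GL_3(\F)\otimes\GL_4(\F)$; the tensor factors are never interchanged, their dimensions being unequal. A suitable conjugate of $\tau$ lies in the central product, so $\tau=\lambda(A\otimes B)$, and since $\tau$ has the single eigenvalue $-1$ the products $\lambda\alpha_i\beta_j$ of the eigenvalues of $A$ and $B$ are all equal, forcing $A$ and $B$ each to have one eigenvalue. Rescaling, I may take $A,B$ unipotent and $\lambda=-1$, so $\tau+\I=-\big((A\otimes B)-\I\big)$; were $A$ or $B$ scalar, the normal closure of $\tau$ would be reducible and $H$ imprimitive by Clifford's theorem, against the first part. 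Hence $A,B$ are non-scalar unipotents, and
$$\im\big((A\otimes B)-\I\big)\supseteq\ker(A-\I)\otimes\im(B-\I)+\im(A-\I)\otimes\ker(B-\I).$$
A short dimension count over the non-scalar possibilities in either factorization shows the right-hand side has dimension at least $3$, contradicting $\mathrm{rank}(\tau+\I)=2$ and completing the proof.
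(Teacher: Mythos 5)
Your tensor-indecomposability argument is complete and correct, and is a clean dual of the paper's: where the paper bounds the $(-1)$-eigenspace of $J_2\otimes J_6$ (resp.\ $J_4\otimes J_3$) above by $6$ (resp.\ $7$), both less than $10$, you bound $\mathrm{rank}(\tau+\I_{12})$ below via
$\im\bigl((A\otimes B)-\I\bigr)\supseteq\ker(A-\I)\otimes\im(B-\I)+\im(A-\I)\otimes\ker(B-\I)$,
whose dimension is indeed at least $3>2$ for all non-scalar unipotent choices (the scalar case is excluded by the same Clifford step the paper uses). Likewise your rank estimate $\mathrm{rank}(\tau+\I)\ge(k-1)d$ on a $\bar\tau$-orbit of length $k$ is sound, and it correctly disposes of $\ell=2,3,4$ and of the $\ell=6$ subcase where $\bar y$ has type $3\cdot 1^3$.

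The genuine gap is in the primitivity half. Because you ignore the symplectic form, you must treat every $\ell\mid 12$, and in exactly the surviving cases --- $\ell=6$ with $p=3$ and $\bar y$ of type $3^2$, and $\ell=12$ with $p\in\{2,3\}$ --- you only announce a plan (``I would enumerate \ldots and check'') without executing it; that is where the content of the lemma sits, not a routine verification. The $\ell=6$, $p=3$ case is completable along your stated lines, and the paper does precisely this: $\tr(xy)=a\ne 0$ and $\tr([x,y])=-2\ne 0$ in characteristic $3$ force $\overline{xy}$ and $\overline{[x,y]}$ each to fix a block, which pins $\bar x$ down to two patterns, both contradicting $\tr([x,y]xy)=-a\ne 0$. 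But $\ell=12$ with $p\in\{2,3\}$ is left entirely open, and it is not small: one must control all transitive pairs $(\bar x,\bar y)$ in $\Sym(12)$ with $\bar x^2=\bar y^3=1$ and $\overline{[x,y]}^5$ of the constrained cycle shape (for $p=3$, a single $3$-cycle, so $[\bar x,\bar y]$ has type $3+5+1^4$ or $3+1^9$, etc.). The idea you are missing is the paper's appeal to \cite[Table 3.5.C]{KL}: an irreducible imprimitive subgroup of $\Sp_{12}(q)$ lies in a class-$\Cl_2$ maximal subgroup, so the blocks may be taken nondegenerate of even dimension or totally isotropic and paired, which reduces everything to $\ell\in\{2,3,6\}$ (roughly, a system of $12$ isotropic lines coarsens, after pairing, to one of $6$ nondegenerate planes), and $\ell=4,12$ never have to be faced. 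A further small ellipsis: your dismissal of $\ell=12$, $p\ge 5$ should say explicitly that $\bar\tau=\id$ makes $\tau$ diagonal on the lines, hence $\tau=-\I_{12}$, contradicting $\mathrm{rank}(\tau+\I_{12})=2$.
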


\begin{proof}
Suppose first that $H$ preserves a decomposition  $\Vb=\F_q^{12}=V_1\oplus \ldots \oplus V_\ell$, $\ell >1$. 
By the irreducibility, the permutation action must be transitive. Also,
by \cite[Table 3.5.C]{KL} it suffices to consider the cases $\ell\in \left\{ 2,3,6\right\}$.\\
\noindent \textbf{Case $\ell=2$.}
We must have $yV_1=V_1$, $yV_2=V_2$. It follows $\tr (xy)=0$ in contrast with  \eqref{trace6}.\\
\noindent \textbf{Case $\ell=3$.} By the transitivity of the action, we may assume $V_2=yV_1$ and $V_3=yV_2$.
Since $\tr(y)=-3$, this implies that $p= 3$.
We may also assume $xV_1=V_1$ and either (1) $xV_2=V_2$ and $xV_3=V_3$ or (2) $xV_2=V_3$.
In case (1) we get again  the contradiction $\tr (xy)=0$; 
in case (2), we get the contradiction $\tr([x,y])=0$, see \eqref{trace6}.\\
\noindent \textbf{Case $\ell=6$.}
By the transitivity of the action, we may assume $V_2=yV_1$ and $V_3=yV_2$.
If $yV_i=V_i$ for all $i\geq 4$,  we may assume $xV_1=V_4$, $xV_2=V_5$ and $xV_3=V_6$.
In this case, it follows that $\tr(xy)=0$, once again a contradiction with \eqref{trace6}.
So, we may also assume $V_4=xV_3$, $V_5=yV_4$ and $V_6=yV_5$.
Since $\tr(y)=-3$, this means that $p=3$.
Now, $\tr(xy)$ and $\tr([x,y])\neq 0$ imply that $xy$ and $[x,y]$ fix at least a subspace $V_i$:
either (1) $xV_1=V_2$,  $xV_3=V_4$ and $xV_i=V_i$ for $i=5,6$; 
or (2) $xV_3=V_4$, $xV_5=V_6$ and $xV_i=V_i$ for $i=1,2$.
In both cases, we obtain a contradiction with $\tr([x,y]xy)=-a\neq 0$.
\smallskip

Assume next that $H$ is conjugate to a subgroup of $\GL_2(\F)\otimes \GL_6(\F)$
and let $\diag(J_2,J_6)$ be a preimage of $[x,y]^5$ in $\GL_2(\F)\times \GL_6(\F)$.
Up to conjugation we may assume 
that $J_2$ and $J_6$ are  upper triangular Jordan canonical forms. Suppose that $J_2$ is scalar. 
Then the normal closure of $[x,y]^5$ under $H$
would be reducible and, by Clifford's Theorem, $H$ would be imprimitive, a contradiction. By the same reason $J_6$ cannot 
be scalar.
Noting that the Kronecker product of upper triangular matrices is upper triangular, since $[x,y]^5$ has only 
the eigenvalue $-1$, $J_2$ and $J_6$ must have unique eigenvalues $\alpha$ and $-\alpha^{-1}$, respectively. Thus  
$J_2=\begin{pmatrix} 
\alpha&1\\
0&\alpha
\end{pmatrix}$ and $J_6$ is a sequence of at most five Jordan blocks with common eigenvalue $-\alpha^{-1}$. In each 
of the 
possible cases, the eigenspace relative to the eigenvalue $-1$ has always dimension at most $6<10$, a contradiction.

Finally, assume that $H$ is conjugate to a subgroup of $\GL_4(\F)\otimes \GL_3(\F)$.
Repeating the previous argument we obtain eigenspaces relative to the eigenvalue $-1$ of dimension at most
$7< 10$. We conclude that $H$ is tensor indecomposable.
\end{proof}

\begin{theorem}\label{main6}
Suppose $q\neq 2,4$ and let $a \in \F_q^*$ such that $\F_q=\F_p[a^2]$.
Then $H=\Sp_{12}(q)$. 
In particular $\Sp_{12}(q)$ is $(2,3)$-generated for all $q>2$.
\end{theorem}  

\begin{proof} 
Clearly, we can take an element  $a\in \F_q^*$ that satisfies our hypothesis
(if $q=11^2$, by  $\F_{121}=\F_{11}[a^2]$ we get $a^2\neq -1$).
By Lemmas \ref{irr6} and \ref{prim6}
the group $H$ is absolutely irreducible, primitive and tensor indecomposable. Moreover $H$  contains the 
bireflection $[x,y]^5$: by \cite[Theorem 7.1]{GS} $H$  is a classical group in a natural representation.

So, suppose that $H$ is contained in a maximal subgroup of class $\mathcal{C}_5$. 
Then, for every  $h \in H$  we have $h^2\in \Sp_{12}(\F_{q_0})$ for some subfield
$\F_{q_0}$ of $\F_{q}$. In particular, we get
$a^2 + 2 =\tr ((xy)^2) \in \F_{q_0}$. We conclude $q_0=q$ 
by the assumption $\F_q= \F_p[a^2]$.

Finally, suppose that $p=2$ and $H$ preserves a quadratic form $Q$, whose corresponding bilinear
form  has Gram matrix $J$ with respect to the canonical basis.
Imposing that $Q$ is preserved by $y$ we get $Q(e_1)=0$ and $Q(e_2)=1$.
Next, imposing that $Q$ is preserved by $x$, we get $Q(e_1)=Q(e_2)$, whence
the contradiction $0=Q(e_1)=Q(e_2)=1$.

This proves that $H=\Sp_{12}(q)$. The last part of the statement follows from Lemma \ref{q4}.
\end{proof}

\section{Case $n=8$ and $q>2$}\label{caso n=8}

Although computational evidences suggest that the elements $x,y$ described in Section \ref{sec2}
work also for $n=8$, we prefer for sake of simplicity to take generators similar to those used for $n=5$.
So, define $x,y$ of respective order $2$ and $3$, as 
follows:
\begin{itemize}
\item $x$ swaps $e_{\pm 1}$ with $e_{\pm 2}$, swaps $e_{\pm 4}$ with $e_{\pm 5}$, and fixes $e_{\pm 3}$;
\item $x$ acts on $\langle e_6,e_7,e_8\rangle$ with matrix
$$\zeta=\begin{pmatrix} -1 & 0 & a \\ 0 & -1 & 0 \\ 0 & 0 & 1 \end{pmatrix}, \quad a \in \F_q^*,$$
and acts on $\langle e_{-6},e_{-7}, e_{-8} \rangle$ with matrix $\zeta^T$;
\item $y e_i=e_{-i}$ and $ye_{-i}=-(e_{i}+e_{-i})$ for $i=1,8$;
\item  $y$ acts on $\langle e_{\pm 2}, \ldots, e_{\pm 7}\rangle$ as the permutation
$\left(e_{\pm 2 },e_{\pm 3},e_{\pm 4}\right)\left(e_{\pm 5 },e_{\pm 6},e_{\pm 7}\right)$.
\end{itemize}
Since $x^T J x  = y ^T J y = J$, it follows that $H=\langle x,y\rangle$ is contained in $\Sp_{16}(q)$.
Furthermore, by Proposition \ref{q=2} we may assume $q>2$.
We start proving the absolute irreducibility of $H$.

\begin{lemma}\label{irr8}
If $p=2$, the subgroup $H$ is absolutely irreducible. 
\end{lemma}

\begin{proof}
Take the element $\eta=y^2[x,y]^3y^2$: its characteristic polynomial is
$(t + 1)^4(t^2 +t + 1)^6$.
The eigenspaces of $\eta$ and $\eta^T$ relative to the eigenvalue $1$ are 
unidimensional and generated, respectively, by
$s_{1}=e_2+e_5+e_7$ and $\overline s_{1}=e_{-2}+e_{-5}+ e_{-7}$.
Let $U$ be an $H$-invariant subspace of $\F^{16}$ and $\overline{U}$ be a corresponding $H^T$-invariant  complement.\\
\textbf{Case 1.} The restriction $\eta_{|U}$ has the eigenvalue $1$.
Then $s_{1}$ belongs to $U$. 
Taking the vector
$u=s_{1}+ x s_{1}+ ys_{1}+ yx s_{1}+ y^2x s_{1}+ yxy(yx)^2s_{1} + (xy)^2  (y x)^2 s_{1}$, we obtain $u=a e_{-8}$.
So,  making alternating use of $x$ and $y$ it is easy to show that
$U$ contains the canonical basis $\{e_1,\ldots,e_{8},e_{-1},\ldots,e_{-8}\}$.
We conclude that $U=\F^{16}$.\\
\textbf{Case 2.} If Case 1 does not occur, then $\overline U$ contains the eigenvector $\overline s_{1}$.
Taking $\overline u=x^T \overline s_{1}+ (yx)^T \overline s_{1} + (y^2)^T \overline s_{1} + 
((xy)^2x)^T \overline s_{1} + ((xy)^3)^T \overline s_{1} + ((xy)^5)^T \overline s_{1}$,
we get $\overline u = a e_{-8}$.
 It follows that $\overline{U}=\F^{16}$ and $U=\{0\}$.
\end{proof}

\begin{lemma}\label{irr8b} 
If $p$ is odd, the group $H$ is absolutely irreducible.
\end{lemma}

\begin{proof} 
The eigenspace $\V_{1}([x,y])$ is  generated by 
$w_1$ and  $w_2$, where
$$w_1= e_1+e_2-e_4-e_5+e_{-1}+e_{-2}+2a^{-1}e_{-8} \equad w_2=e_6.$$
Note that $xw_1=w_1$ and $xw_2=-w_2$.

Let $U$ be an $H$-invariant subspace of $\F^{16}$.
If  $U$ contains $w_2$,  making alternating use of $x$ and $y$ it is easy to show that
$U$ contains the canonical basis $\{e_1,\ldots,e_{8},e_{-1},\ldots,e_{-8}\}$.
We conclude that $U=\V$.
By Lemma \ref{HT}(2), we have $\V_{1}([x,y])\cap U=\langle u \rangle$,
where we may assume $\dim(U)\leq 8$ and, by the previous computations, $u=w_1$.
Let $M$ be the matrix whose columns are the images of $w_1$ under 
$$\I_{16}, \;y, \; y^2, \;  xy^2, \; yxy^2, \; [x,y^2], \; (yx)^2y^2, \; x(yx)^2y^2, \; y^2(xy^2)^3.$$
The submatrix of $M$ corresponding to the rows $5, 7, 10, 11, 12, 13, 14, 15, 16$
has determinant  $4a$.
We conclude  that $M$ has rank $9$, against our assumption $\dim(U)\leq 8$.
\end{proof}

Now, take $\tau=[x,y]^{4}$ if $p=2$ and $\tau=[x,y]^8$ if $p$ is odd.
Then, $\tau$ belongs to $N \rtimes \wSL_8(q)$ and its characteristic polynomial is
$(t-1)^{16}$. The subgroup
$$K=\left\{\begin{array}{ll}
\left\langle \tau^{yxy^2}, \tau^{y x y^2 x}, \tau^{(yx)^3}, \tau^{(yx)^2y } \right\rangle & \textrm{if } p=2,\\[5pt]
\left\langle  \tau^{yx y^i}, \tau^{(yx)^2}, \tau^{(yx)^2y}, \tau^{(yx)^3}, \tau^{y^2xy^2}\mid i\in \{0,\pm 1\} \right\rangle 
& \textrm{if } p>2
       \end{array}\right.$$
fixes $V$, namely $K$ is a subgroup of $N \rtimes \wSL_8(q)$.

We first consider the case $p=2$.
Take the matrix 

\begin{footnotesize}
$$P=\begin{pmatrix}
0 & 0 & 1 &    0 & 1 & 1 & 0 & 0\\
0 & 0 & 1 &  a^2 & 0 & 0 & a^2 & 0\\
0 & 0 & 0 &    0 & 1 & 0 & a^2 & 0\\
1 & 0 & 0 &    0 & 0 & 0 & 0 & 0\\
0 & 1 & 0 &    0 & 0 & 0 & 0 & 0\\
0 & 0 & 0 &    0 & 1 & 0 & 0 & 0\\
0 & 0 & 1 &    0 & 0 & 0 & 0 & 0\\
0 & 0 & 0 &    0 & 0 & a^{-1} & a & 1
    \end{pmatrix}$$ 
\end{footnotesize}

\noindent whose determinant is $a^4$. Define now $\hat P=\diag(P,P^{-T})$:
then $K^{\hat P}\leq N \rtimes \wSL_8(q)$  acts as the identity on the subspace $E_3=\langle e_1,e_2,e_3\rangle$.
Let $\hat K$ be the projection on $V/E_3$ of $K^{\hat P}$.
The group $\hat K$ is then generated by the following elements:
$$
\tau_1=
\begin{pmatrix}
 1 &  1 &  0 &  0 &  0\\
 0 &  1 &  0 &  0 &  0\\
 0 &  0 &  1 & a^4&   0\\
 0 &  0 &  0 &  1 &  0\\
 0 &  0 &  0 &   0 &  1\\
\end{pmatrix},\quad 
\tau_2=
\begin{pmatrix}
 1 &   0 &  0  &  0 &  0\\
 0 &  1  & a^2 &  0  & a^3\\
 0 & a^2 &  1 & a^4 &  0\\
 0 &  0 &  1 &  1  & a\\
 0 &  a &  0  & a^3 &  1\\
\end{pmatrix},$$
$$\tau_3=\begin{pmatrix}
      1  &     0  &     0    &   0 &      0\\
    a^4 & a^2 + 1  &   a^2    &   0 &    a^3\\
      0  &   a^2 &a^2 + 1    & a^4 &    a^3\\
      0  &     0 &      0    &   1 &      0\\
    a^3  &     0 &      0    & a^3 &      1\\
       \end{pmatrix},\quad
\tau_4=\begin{pmatrix}
1& 1& 0& 0& 0\\
0& 1& 0& 0& 0\\
0& 0& 1& 0& 0\\
0& 0& 1& 1& 0\\
0& 0& 0& 0& 1\\
       \end{pmatrix}. $$

\begin{lemma}\label{8p2}
Assume $p=2 < q$. If $a\in \F_q^*$ is such that $\F_2[a]=\F_q$, then $\hat K =\SL_{5}(q)$.
\end{lemma}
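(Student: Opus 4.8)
The plan is to identify $\hat K$ as an irreducible subgroup of $\SL_5(q)$ generated by root subgroups and then to conclude $\hat K=\SL_5(q)$ by McLaughlin's theorem, exactly as in Lemma \ref{G9}. The root subgroups are produced by Dickson's Lemma: with respect to the decomposition $\langle e_1,e_2\rangle\oplus\langle e_3,e_4\rangle\oplus\langle e_5\rangle$ the generators $\tau_1$ and $\tau_4$ are block diagonal, restricting to the transvections $\left(\begin{smallmatrix}1&a^4\\0&1\end{smallmatrix}\right)$ and $\left(\begin{smallmatrix}1&0\\1&1\end{smallmatrix}\right)$ on $\langle e_3,e_4\rangle$, so by \cite[Theorem 8.4]{G} and $\F_2[a^4]=\F_2[a]=\F_q$ they induce $\SL_2(q)$ there; processing the remaining generators in the same way (or assembling a $3$-dimensional section as with $G_3$ in \eqref{S9}) furnishes full $\F_q$-root subgroups inside $\hat K$ together with an irreducible section $W$. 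Since such a root subgroup has order $q$, it will at the end simultaneously exclude a subfield subgroup $\SL_5(q_0)$ and a unitary group $\SU_5(q_0)$, whose transvection subgroups have order $q_0<q$; equivalently one may close with the trace arguments of Lemma \ref{G72}.

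The main obstacle is absolute irreducibility. Because every generator $\tau_i$ is unipotent (the ambient $\tau=[x,y]^4$ has characteristic polynomial $(t-1)^{16}$), the eigenvector method of Lemmas \ref{G7} and \ref{G72}, which exploited eigenvalues $\sigma^{\pm1}\neq1$ of $\tau$ itself, is not directly available. Following Lemma \ref{G9} I would instead single out a product $g$ of the generators whose characteristic polynomial on $\F_q^5$ carries a nontrivial quadratic factor $t^2+\phi(a)t+1$ with $\phi(a)\neq0$ and $\F_2[\phi(a)]=\F_q$, so that $g$ and $g^T$ have one-dimensional eigenspaces for some $\sigma^{\pm1}\neq1$ depending on $a$. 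For a nonzero $\hat K$-invariant subspace $U$ the usual dichotomy then applies: if $g_{|U}$ has the eigenvalue $\sigma^{\pm1}$, then $U$ contains the corresponding eigenvector, hence meets $W$, hence contains $W$ by irreducibility of the section, and a spanning determinant computation forces $U=\F_q^5$; otherwise $\left(g^T\right)_{|\overline U}$ has this eigenvalue on the $\hat K^T$-invariant complement $\overline U$, which then fills up, giving $U=\{0\}$. The delicate point is that the hypothesis leaves no room to discard exceptional values: each spanning determinant must be a polynomial in $a$ whose only roots lie in $\F_2$, so that it is nonzero for \emph{every} $a$ with $\F_2[a]=\F_q$. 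Verifying this nonvanishing for the explicit matrices $\tau_1,\dots,\tau_4$ is the real computational content of the argument.

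Once $\hat K$ is known to be absolutely irreducible and generated by full $\F_q$-root subgroups, Clifford's theorem shows it has no nontrivial unipotent normal subgroup, and \cite[Theorem, page 364]{Mc} yields $\hat K=\SL_5(q)$: the symplectic alternative is impossible since $5$ is odd, the odd-dimensional orthogonal module is reducible in characteristic $2$ and hence excluded by irreducibility, and the subfield and unitary alternatives are excluded by the full order $q$ of the root subgroups. Alternatively, one reaches ``classical group in the natural representation'' through \cite[Theorem 7.1]{GS}, after remarking that $\hat K$ contains a transvection, is tensor indecomposable (automatic since $\dim=5$ is prime), and is primitive (an imprimitive irreducible group in dimension $5$ would be monomial on five lines, which is ruled out by the cycle type forced on the unipotent generator, as in Lemma \ref{G72}), and then removing $\SU_5$ and the subfield case by the trace computations of Lemma \ref{G72}.
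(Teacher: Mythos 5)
Your proposal is correct in substance, but it closes the argument along a genuinely different route from the paper. The irreducibility core is the same: you correctly diagnose that the eigenvalue method must be applied to a \emph{product} of the unipotent generators, and the paper does exactly this with $g=\tau_1\tau_4$, whose characteristic polynomial is $(t+1)^3(t^2+a^4t+1)$; its two spanning determinants are $a^{15}(\sigma^{\pm 1}+1)$ and $a^{27}$, which vanish only at $a=0$, confirming your anticipation that the computations must leave no exceptional values of $a$ to discard. The divergence is in the endgame: the paper does \emph{not} use McLaughlin's theorem here. It proves primitivity by the orbit/characteristic-polynomial analysis on $\tau_1\tau_4$ (each imprimitive orbit type forces $a\in\F_2$), invokes the classification of maximal subgroups in dimension $5$ (\cite[Tables 8.18 and 8.19]{Ho}) to conclude that $\hat K$ is a classical group in a natural representation, and then excludes the subfield case via the similarity invariant $t+1$ together with $\tr(\tau_1\tau_4)=a^4+1$, and the unitary case via $\tr([\tau_1,\tau_4])=(a+1)^8$ and the condition that $g^{-T}$ be conjugate to $g^{\psi}$ --- so your ``alternative'' closing is essentially the one the paper executes (with \cite{Ho} in place of \cite[Theorem 7.1]{GS}). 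Your primary route --- Dickson's Lemma on the $\langle e_3,e_4\rangle$ block of $\tau_1,\tau_4$, then McLaughlin --- is sound and arguably cleaner in prime odd dimension: $\Sp_5(q)$ is excluded by parity, and subfield and unitary subgroups never appear in McLaughlin's conclusion since their transvection subgroups have order strictly less than $q$, so both the primitivity analysis and the two trace computations become unnecessary. The one step you leave implicit is extracting root subgroups \emph{supported only on} $\langle e_3,e_4\rangle$: $\tau_1$ and $\tau_4$ induce the \emph{same} involution on $\langle e_1,e_2\rangle$ and fix $e_5$, so the even-length words (e.g.\ $\tau_1\tau_4$ and $\tau_4\tau_1$) act trivially off the block and, since $\SL_2(q)$ is perfect for $q\geq 4$, still generate the full block $\SL_2(q)$; one should also record the routine Clifford observation (immediate here because $5$ is prime, as you note) that the normal subgroup of $\hat K$ generated by these root subgroups is itself irreducible before McLaughlin is applied.
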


\begin{proof}
We first prove that $\hat K$ is absolutely irreducible.
To this purpose, we observe that the characteristic polynomial of $\tau_1 \tau_4$ is $(t + 1)^3 (t^2 + a^4 t + 1)$.
Let $\sigma\neq 1$ be an eigenvalue of $\tau_1 \tau_4$.
Then the eigenspaces of $\tau_1\tau_4$ and $(\tau_1 \tau_4)^T$ relative to the eigenvalue $\sigma^{\pm 1}$ are generated, respectively, by
$s_{\sigma^{\pm 1}}= (\sigma^{\pm 1} + 1) e_3+e_4$ and $\overline s_{\sigma^{\pm 1}}= e_3+(\sigma^{\mp 1} + 1)e_4$.
Let $U$ be a $\hat K$-invariant subspace of $\F^{5}$ and $\overline{U}$ be a corresponding $\hat K^T$-invariant  complement.\\
\textbf{Case 1.} The restriction $(\tau_1\tau_4)_{|U}$ has an eigenvalue $\sigma^{\pm 1}$.
The matrix whose columns are the five vectors 
$$s_{\sigma^{\pm 1}}, \; \tau_1 s_{\sigma^{\pm 1}}, \; \tau_2 s_{\sigma^{\pm 1}},\;
 \tau_3\tau_1 s_{\sigma^{\pm 1}}, \; \tau_4 \tau_2 s_{\sigma^{\pm 1}}$$
has determinant $a^{15}(\sigma^{\pm 1}+1)\neq 0$, so $U=\F^5$.\\
\textbf{Case 2.} If Case 1 does not occur, then $\overline U$ contains both the eigenvectors $\overline s_{\sigma}$ and $\overline s_{\sigma^{-1}}$.
Take $\overline u =\overline s_{\sigma}+ \overline s_{\sigma^{-1}}$. The matrix whose columns are the five vectors
$$\overline u=a^4e_4, \; \tau_2^T\overline u,\;\tau_4^T \overline u, \; (\tau_2\tau_3)^T \overline u,\; (\tau_4\tau_3)^T\overline u$$
has determinant $a^{27}$. We conclude that $\overline U=\F^{5}$ and $U=\{0\}$.
 
Suppose now that $\hat K$ is imprimitive. Then it permutes transitively $5$
appropriate subspaces $\langle v_i\rangle$, $1\leq i \leq 5$.
In particular $\tau_1\tau_4$ must have a non-trivial orbit in this permutation action. 
Since this element fixes pointwise a $3$-dimensional space, the lengths of the orbits can only be $(2,1^3)$, or $(3,1^2)$, or
$(2^2,1)$, with corresponding characteristic polynomials
$$(t^2+\alpha)(t+1)^3,\quad (t^3+\alpha)(t+1)^2\equad (t^2+\alpha)(t^2+\beta)(t+1).$$
Comparing these polynomials with the characteristic polynomial of $\tau_1\tau_4$, each alternative leads to the contradiction 
$a\in \F_2$.

By \cite[Tables 8.18 and 8.19]{Ho}: $\hat K$ is a classical group in a natural representation.
Suppose there exists $g\in \GL_{5}(\F)$ such that $\hat K^g\leq \GL_{5}(q_0)(\F^* \, \I_{5})$, where $\F_{q_0}$ is 
a subfield of $\F$. Set $(\tau_1\tau_4)^g=\vartheta^{-1} \tau_0$, with $\tau_0\in \GL_5 (q_0)$ and  $\vartheta\in \F$.
Since $\tau_1\tau_4$ has the similarity invariant $t+1$, the matrix $\tau_0=\vartheta(\tau_1\tau_4)^g$ has the similarity 
invariant  $t+\vartheta$. It follows that $\vartheta \in \F_{q_0}$ and that the trace of $\tau_1\tau_4$ belongs to the subfield
$\F_{q_0}$. From  $\tr (\tau_1\tau_4)=a^4 + 1$ we obtain $a\in \F_{q_0}$, and so
$\F_q=\F_{2}[a]\leq \F_{q_0}$. We conclude that either $K=\SL_{5}(q)$ or 
$K\leq \SU_{5}(q_1^2) (\F_q^* \, \I_{5})$ with $q_1^2=q$.
Suppose the latest case occurs. 
Let $\psi: \GL_{5}(q)\to \GL_{5}(q)$ be the morphism defined by $(a_{i,j})^\psi=(a_{i,j}^{q_1})$. 
Taking $g=[\tau_1, \tau_4]$ and using the fact that $g^{-T}$ must be conjugate to $g^{\psi}$,
we obtain that $\tr(g) \in \F_{q_1}$. 
Since $\tr(g)=(a+1)^{8}$, we obtain that $a \in \F_{q_1}$ and so $\F_q=\F_2[a]\leq \F_{q_1}$.
This proves that $\hat K=\SL_{5}(q)$.
\end{proof}

Assume now $p$ odd. Observe that in this case $\tau$ is a bireflection.
Moreover, the subgroup $K$ preserves the subspace $E_5=\langle e_1,\ldots,e_5\rangle$, acting as the identity on $V/E_5$.
Let $\tau_1,\ldots,\tau_7$ be the restrictions to $E_5$ of the seven generators of $K$. Then
$$\tau_1=\I_5+4 a^2 (E_{4,5}-E_{4,2}),\quad \tau_2=\I_5-4a^2 E_{3,4},\quad 
\tau_3=\I_5 - 4a^2 E_{2,3},$$ 
$$\tau_4=\I_5-4a^2 E_{3,5},\quad 
\tau_5=\I_5+4a^2 E_{2,1},\quad  \tau_6=\I_5+4a^2 E_{1,2},\quad \tau_7=\I_5+4a^2( E_{5,1}-E_{5,4}).$$

\begin{lemma}\label{8podd}
Assume $p$ odd and $q\neq 9$. Suppose that $a\in \F_q^*$ is such that:
$${\rm (a)}\quad \F_p[a^4]=\F_q;\qquad  {\rm (b)}\quad 4a^4+1\neq 0.$$
Then $K_{|E_5} =\SL_{5}(q)$.
\end{lemma}

\begin{proof}
By Dickson's Lemma and  assumption (a), 
the group $\left\langle \tau_5 , \tau_6\right\rangle$ induces $\SL_2(q)$ on $\langle e_1,e_2\rangle$. 
We now prove that $K_{|E_5}$ is absolutely irreducible.
To this purpose, we observe that $\tau_5\tau_6$ is a symmetric matrix whose characteristic polynomial 
is $(t -1)^3 (t^2 -2(8a^4+1)  t + 1)$.
Let $\sigma\neq 1$ be an eigenvalue of $\tau_5 \tau_6$ (note that $-1\neq \sigma\neq \sigma^{-1}$ by (b)).
The eigenspace of $\tau_5\tau_6$ relative to the eigenvalue $\sigma^{\pm 1}$ is generated by 
$s_{\sigma^{\pm 1}}= 4e_1 + \frac{\sigma^{\pm 1}-1}{a^2} e_2$.
Let $U$ be a $K_{|E_5}$-invariant subspace of $\F^{5}$ and $\overline{U}$ be a corresponding $(K_{|E_5})^T$-invariant  complement.\\
\textbf{Case 1.} The restriction $(\tau_1\tau_2)_{|U}$ has an eigenvalue $\sigma^{\pm 1}$.
Since the matrix whose columns are the five vectors
$$s_{\sigma^{\pm 1}},\; \tau_6\tau_1s_{\sigma^{\pm 1}},\;
\tau_6\tau_2 s_{\sigma^{\pm 1}},\;
\tau_6\tau_7 s_{\sigma^{\pm 1}},\;
\tau_6\tau_2\tau_1 s_{\sigma^{\pm 1}}$$
has determinant $-2^{12} a^2 (\sigma^{\pm 1}-1)^4\neq 0$, we obtain $U=\F^5$.\\
\textbf{Case 2.} If Case 1 does not occur, then $\overline U$ contains both the eigenvectors
$s_{\sigma}$ and $s_{\sigma^{-1}}$. Take $\overline u=s_{\sigma}-s_{\sigma^{-1}}$.
Since the matrix whose columns are the five vectors
$$\overline u, \; (\tau_3)^T\overline u, \; (\tau_5)^T\overline u, \;
(\tau_6\tau_3\tau_4)^T\overline u, \; (\tau_6\tau_3 \tau_4 \tau_2)^T\overline u$$
has determinant $2^{12} a^2 (\sigma-\sigma^{-1})^5\neq 0$, we obtain $\overline U=\F^5$ and so $U=\{0\}$.

By Lemma \ref{McLaughlin} we conclude that   $K_{|E_5}=\SL_5(q)$.
\end{proof}

\begin{proposition}\label{main8}
Suppose $q>2$. Then there exists $a \in \F_q^*$ such that $H=\Sp_{16}(q)$. 
\end{proposition}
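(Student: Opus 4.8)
The plan is to reproduce, in dimension $16$, the strategy used for $n=6$ and $n=7$ (Theorem \ref{main6} and Proposition \ref{main7}): exhibit a value of $a$ for which $H$ is absolutely irreducible and $|\SL_5(q)|$ divides $|H|$, rule out every maximal overgroup of $H$ in $\Sp_{16}(q)$ via Aschbacher's classification, and finish the residual small fields with the order-spectrum criterion of Lemma \ref{LPS}. First I would secure a suitable $a$. The relevant constraints are $\F_p[a]=\F_q$ when $p=2$ (resp. $\F_p[a^4]=\F_q$ when $p>2$), coming from Lemmas \ref{8p2}, \ref{8podd} and \ref{subfield8}, together with the finitely many non-vanishing conditions of Lemmas \ref{irr8}, \ref{8p2} and \ref{8podd}. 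As in the proofs for $n=6,7$, I would bound the bad $a$ of subfield type through Lemma \ref{campoN} and the roots of the exceptional polynomials by their degrees, so that $|\F_q^*|$ exceeds the total count of excluded values for all but a short, explicit list of small $q$; for those I would prescribe $a$ by a minimal polynomial, as in Table \ref{minp}.

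For such an $a$, Lemma \ref{irr8} gives that $H$ is absolutely irreducible, and Lemma \ref{8p2} (if $p=2$) or Lemma \ref{8podd} (if $p>2$, $q\neq 9$) exhibits a section of $H$ isomorphic to $\SL_5(q)$, so that $|\SL_5(q)|$ divides $|H|$; when $p>2$ the element $\tau=[x,y]^8\in H$ is moreover a bireflection. Assuming $H\leq M$ for some maximal $M$, I would go through the classes of \cite[Table 3.5.C]{KL}. Class $\Cl_5$ is removed by Lemma \ref{subfield8}. For $p>2$ I would first establish that $H$ is primitive and tensor indecomposable, arguing inline exactly as in Lemmas \ref{tensor primitive} and \ref{prim6}: the possible transitive permutation actions and tensor factorisations are contradicted by the traces of $y$, $xy$, $[x,y]$ and $[x,y]xy$ and by the fact that no such decomposition accommodates the $14$-dimensional fixed space of the bireflection $\tau$; then \cite[Theorem 7.1]{GS} forces $H$ into a classical group in its natural representation, and the unitary and remaining subfield alternatives are killed by a trace-and-conjugacy computation of the type carried out in Lemma \ref{G72}. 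For $p=2$ I would instead lean on the divisibility by $|\SL_5(q)|$ and on Zsigmondy primes \cite{Z} to discard the imprimitive ($\Cl_2$), tensor ($\Cl_4,\Cl_7$) and almost-simple ($\ClS$) classes, and exclude the orthogonal class $\Cl_8$ by the quadratic-form argument used for $n=6,7$: imposing that a non-degenerate $Q$ with polarisation $J$ is fixed first by $y$ and then by $x$ yields the contradiction $1=Q(e_1)=\cdots=0$.

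The hard part will be the Aschbacher exclusion, since $\SL_5(q)$, of order about $q^{24}$, neither fills a natural block of the $16$-dimensional module nor is large relative to $\Sp_{16}(q)$ (order about $q^{136}$); thus the clean order bounds that dispatched $\ClS$ for $n=7$ no longer suffice, and one must squeeze the necessary constraints out of the precise element orders furnished by $\SL_5(q)$ together with the bireflection $\tau$. For $p>2$ the weight is carried by Guralnick--Saxl, and the delicate point is verifying primitivity and tensor indecomposability without a ready-made lemma; for $p=2$, where we do not have a bireflection at our disposal, the entire burden falls on element orders and the table data, and checking that these simultaneously exclude every geometric class and every $\ClS$ candidate is the most laborious step. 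I would close the small fields left over from the counting argument (together with $q=9$ when $p>2$) by exhibiting explicit sets $L_q$ with $\bigcup_{k\in L_q}\varpi((xy)^k y)=\varpi(\Sp_{16}(q))$ and invoking Lemma \ref{LPS}, ruling out the residual $\Omega_{16}^-(q)$ alternative for even $q$ by the same quadratic-form contradiction.
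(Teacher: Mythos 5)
Your proposal is correct and takes essentially the same approach as the paper: the same counting of admissible $a$ via Lemma \ref{campoN} with prescribed minimal polynomials for the exceptional small fields, absolute irreducibility plus the $\SL_5(q)$ section from Lemmas \ref{irr8}, \ref{8p2} and \ref{8podd}, Lemma \ref{subfield8} for $\Cl_5$, the bireflection route through \cite[Theorem 7.1]{GS} after the $\GL_8(q).2$ trace contradiction when $p>2$, the quadratic-form contradiction for $\Cl_8$ when $p=2$, and Lemma \ref{LPS} with explicit sets $L_q$ for the leftover cases $q\in\{3,5,9\}$. The one step where the paper is more specific than your plan is the exclusion of $\ClS$ for $p=2$ (where, as you correctly observe, the crude bound $|M|\leq q^{48}$ cannot beat $|\SL_5(q)|\approx q^{24}$): the paper settles it at once by invoking the Hiss--Malle list \cite{HM} of quasi-simple groups with irreducible representations of degree $16$ and discarding every candidate by divisibility by $|\SL_5(q)|$, which is precisely the ``table data'' your plan anticipates.
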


\begin{proof}
We first show that for $q\not \in \{2,5,9\}$, there exists $a \in \F_q^*$ such that 
$\F_p[a^4]=\F_q$ and $4a^4+1\neq 0$.
If $p=2$, it suffices to take a  primitive element of $\F_q$.
If $q=p\geq 7$, we have to exclude at most $4$ values of $a$,
so we are done.
If $q=3$ take $a\neq 0$.
Suppose now  $q=p^f$, with $f\geq 2$, and let $\N(q)$ be the number of elements $b\in \F_q^*$ 
such that $\F_p[b^4]\neq \F_q$. We have to check when $p^f-1-\N(q) > 0$,
as the condition  $a^4\neq -\frac{1}{4}$ can be dropped.
By Lemma \ref{campoN}, we have
$\N(q)\leq 4 p\frac{ p^{\left \lfloor f/2\right \rfloor} -1}{p-1}$
and hence it suffices to check when
$p^f - 4 p\frac{ p^{\left \lfloor f/2\right \rfloor} -1}{p-1} > 1$.
This condition is fulfilled, unless $q=3^2$.

So, taking a such element $a\in \F_q^*$, by Lemmas \ref{irr8}, \ref{8p2} and \ref{8podd}
the subgroup $H$ is absolutely irreducible and its order is divisible by the order of $\SL_5(q)$.
Suppose that $H$ is contained in a maximal subgroup $M$ of $\Sp_{16}(q)$.

Suppose that $M\in \mathcal{C}_5$. Then, for every $h \in H$  we have $h^2\in \Sp_{16}(\F_{q_0})$ for some subfield
$\F_{q_0}$ of $\F_{q}$. In particular, if $p=2$ we get
$a^4  =\tr ((xy)^{18}) \in \F_{q_0}$;
if $p$ is odd, we get
$8a^2 - 1=\tr ((xy)^{8}) \in \F_{q_0}$. 
In both cases, we conclude $q_0=q$, by the assumption $\F_q= \F_p[a^4]$.

Hence, by order reasons, we have that $M$ can only be one of the following subgroups:
(i) a subgroup of type $\GL_8(q).2$  in class $\Cl_2$ ($q$ odd);
(ii) a subgroup in class $\Cl_6$ ($q=p>2$);
(iii) a subgroup in class $\Cl_8$ ($q$ even);
(iv) a subgroup in class $\ClS$.

Suppose $p$ odd. If $H$ preserves a decomposition  $V=V_1\oplus V_2$, where $\dim V_1=\dim V_2=8$, 
by the irreducibility, the permutation action must be transitive: we have $yV_1=V_1$,
$yV_2=V_2$ and $xV_1=V_2$.  This produces the contradiction $-1=\tr (xy)=0$.
Since $\tau$ is a bireflection, we can now apply \cite[Theorem 7.1]{GS}: $K$ is a classical group in a natural representation
and by the above considerations, we conclude that $H=\Sp_{16}(q)$.

Suppose $p=2$. If $M$ belongs to the class $\Cl_8$, $H$  preserves a quadratic form $Q$, whose corresponding 
bilinear form  has Gram matrix $J$ with respect to the canonical basis.
Imposing that $Q$ is preserved by $y$ we obtain $Q(e_1)=1$, $Q(e_2)=Q(e_4)$,
and $Q(e_5)=Q(e_6)$.  Imposing that $Q$ is preserved by $x$, we get $Q(e_1)=Q(e_2)$, $Q(e_4)=Q(e_5)$
and $Q(e_6)=0$. This produces the contradiction $1=Q(e_1) =Q(e_6)  =0$.
So, we are left to the case $M\in \ClS$. 
The quasi-simple groups which admit irreducible representations of degree $16$ are listed in 
\cite{HM}. Since the order of $H$ is divisible by the order
of $\SL_5(q)$, we can excluded all the possibilities.
We conclude that $H=\Sp_{16}(q)$.

Finally, consider the cases $q\in  \{5,9\}$.
If $q=5$, take $a=1$; if $q=9$, take $a\in \F_9$ whose minimal polynomial over $\F_3$ is $t^2-t-1$.
Also, take
$L_5=\{3,9,10,13,14,15,34 \}$ and
$L_9=\{4,6,7,8,11,15,20,54 \}$.
Then 
$$\bigcup_{k \in L_q} \pi( (xy)^k y ) =\pi(\Sp_{16}(q)).$$
By Lemma \ref{LPS} we conclude that  $H=\Sp_{16}(q)$.
\end{proof}


\begin{thebibliography}{30}

\bibitem{Ho} J.N. Bray, D.F. Holt, C.M. Roney-Dougal, 
\emph{The maximal subgroups of the low-dimensional finite classical groups}, 
London Math. Soc. Lecture Note Series 407, Cambridge University Press (2013).

\bibitem{Ca} R.W. Carter, 
\emph{Simple groups of Lie type}, 
John Wiley \& Sons, 1972.

\bibitem{CDM} M. Cazzola, L. Di Martino,
$(2,3)$-generation of $PSp(4,q)$, $q=p^n$, $p\neq 2,3$,
\emph{Results Math.} \textbf{23} (1993),  221--232. 

\bibitem{CPS} E. Cline, B. Parshall, L. Scott,
Cohomology of finite groups of Lie type. I,
\emph{Inst. Hautes \'Etudes Sci. Publ. Math.} \textbf{45} (1975), 169--191. 

\bibitem{G} D. Gorenstein, 
\emph{Finite groups}, Second edition,
Chelsea Publishing Co., New York, 1980. 

\bibitem{GS} R.M. Guralnick, J. Saxl, 
Generation of finite almost simple groups by conjugates,
\emph{J. Algebra} \textbf{268} (2003),  519--571.

\bibitem{HM} G. Hiss, G. Malle, 
Corrigenda: ``Low-dimensional representations of quasi-simple groups'',
\emph{LMS J. Comput. Math.} \textbf{5} (2002), 95--126.

\bibitem{JP} W. Jones, B. Parshall, 
On the $1$-cohomology of finite groups of Lie type, in:
W.R. Scott, F. Gross (Eds.), 
\emph{Proceedings of the Conference on Finite Groups}, 
Academic Press, New York, 1976, pp. 313--328.

\bibitem{KM} E.I. Khukhro, V.D. Mazurov, 
\emph{Unsolved Problems in Group Theory. The Kourovka Notebook},
available at \texttt{https://arxiv.org/abs/1401.0300},   4 December  2020.

\bibitem{KL} P. Kleidman, M. Liebeck,
\emph{The subgroup structure of the finite classical groups},
London Mathematical Society Lecture Note Series, 129. Cambridge University Press, Cambridge, 1990.

\bibitem{LPS} M.W. Liebeck, C.E. Praeger, J. Saxl, 
Transitive subgroups of primitive permutation groups,
\emph{J. Algebra} \textbf{234} (2000), 291--361. 

\bibitem{LS} M.W. Liebeck, A.  Shalev, 
Classical groups, probabilistic methods, and the $(2,3)$-generation problem, 
\emph{Ann.  Math. (2)} \textbf{144} (1996), 77--125.

\bibitem{Mc} J. McLaughlin, 
Some groups generated by transvections,
\emph{Arch. Math. (Basel)} \textbf{18} (1967), 364--368. 

\bibitem{Mc2} J. McLaughlin,
Some subgroups of $SL_n(\mathbf{F}_2)$.
\emph{Illinois J. Math.} \textbf{13} (1969), 108--115. 

\bibitem{Sp6} M.A. Pellegrini, 
The $(2,3)$-generation of the classical simple groups of dimensions $6$ and $7$,
\emph{Bull. Aust. Math. Soc.} \textbf{93} (2016),  61--72.

\bibitem{SL12} M.A. Pellegrini,
The $(2,3)$-generation of the special linear groups over finite fields,
\emph{Bull. Aust. Math. Soc.} \textbf{95} (2017),  48--53. 

\bibitem{Ischia} M.A. Pellegrini, M.C.  Tamburini,
Finite simple groups of low rank: Hurwitz generation and $(2,3)$-generation,
\emph{Int. J. Group Theory} \textbf{4} (2015),  13--19. 

\bibitem{35} M.A. Pellegrini, M.C. Tamburini Bellani,
The simple classical groups of dimension less than $6$ which are $(2, 3)$-generated,
\emph{J. Algebra Appl.} \textbf{14}  (2015), \#1550148,  15 pages.

\bibitem{Unit} M.A. Pellegrini, M.C. Tamburini Bellani,
The $(2,3)$-generation of the finite unitary groups,
\emph{J. Algebra.} \textbf{549} (2020), 319--345.

\bibitem{Sp4} M.A. Pellegrini, M.C. Tamburini Bellani, M.A.  Vsemirnov,
Uniform $(2,k)$-generation of the $4$-dimensional classical groups,
\emph{J. Algebra} \textbf{369} (2012), 322--350.

\bibitem{Pip} F.C. Piper, 
On elations of finite projective spaces of even order,
\emph{J. London Math. Soc. } \textbf{43} (1968), 459--464. 

\bibitem{ST} P. Sanchini, M.C. Tamburini, 
$(2,3)$-generation: a permutational approach,
\emph{Rend. Sem. Mat. Fis. Milano} \textbf{64} (1994), 141--158.

\bibitem{TV} M.C. Tamburini Bellani, M. Vsemirnov,
Hurwitz generation of $\PSp_6(q)$,
\emph{Comm. Algebra} \textbf{43} (2015), 4159--4169. 

\bibitem{Vse2} V.L. Vasilyev, M.A. Vsemirnov, 
On $(2, 3)$-generation of low-dimensional symplectic groups over the integers, 
\emph{Comm. Algebra} \textbf{38} (2010), 3469--3483.

\bibitem{Sp10} V. Vasilyev, M. Vsemirnov,
The group $\Sp_{10}(\Z)$ is $(2,3)$-generated,
\emph{Cent. Eur. J. Math.} \textbf{9} (2011),  36--49. 

\bibitem{V} M.A. Vsemirnov,
More classical groups which are not $(2,3)$-generated,
\emph{Arch. Math. (Basel)} \textbf{96} (2011),  123--129. 

\bibitem{Max} M.A. Vsemirnov, 
On $(2, 3)$-generated groups, 
\emph{Group Theory Conference in honour of V. Mazurov}, Novosibirsk, 20th July 2013,
slides of the conference available at\\ \texttt{www.math.nsc.ru/conference/groups2013/slides/MaximVsemirnov\_slides.pdf}.

\bibitem{Z} K. Zsigmondy, 
Zur Theorie der Potenzreste, 
\emph{Monatsh. f\"ur Math. u. Phys.} \textbf{3} (1892), 265--284.

\end{thebibliography}
\end{document}